\DeclareFontFamily{OMS}{rsfs}{\skewchar\font'60}
\DeclareFontShape{OMS}{rsfs}{m}{n}{<-5>rsfs5 <5-7>rsfs7 <7->rsfs10 }{}
\DeclareSymbolFont{rsfs}{OMS}{rsfs}{m}{n}
\DeclareSymbolFontAlphabet{\scr}{rsfs}
\renewcommand{\P}{\mathbb{P}} \newcommand{\C}{\mathbb{C}}
\renewcommand{\O}{\sO}
\newcommand{\Q}{\mathbb{Q}}
\newcommand\smallresto[1]{\hbox{\hbox{$_{{\vert}_{{#1}}}$}}}
\newcommand\resto[1]{\hbox{\hbox{$\big\vert{}_{_{#1}}$}}}
\newcommand{\sA}{\scr{A}}
\newcommand{\sB}{\scr{B}}
\newcommand{\sC}{\scr{C}}
\newcommand{\sF}{\scr{F}}
\newcommand{\sO}{\scr{O}}
\newcommand{\bA}{\mathbb{A}}
\newcommand{\bC}{\mathbb{C}}
\newcommand{\bN}{\mathbb{N}}
\newcommand{\bP}{\mathbb{P}}
\newcommand{\bQ}{\mathbb{Q}}
\newcommand{\bZ}{\mathbb{Z}}
\DeclareMathOperator{\codim}{codim}
\DeclareMathOperator{\Hilb}{Hilb}
\DeclareMathOperator{\red}{red}
\DeclareMathOperator{\reg}{reg}
\DeclareMathOperator{\sing}{sing}
\DeclareMathOperator{\Sym}{Sym}
\DeclareMathOperator{\Isom}{Isom}
\DeclareMathOperator{\Var}{Var}
\DeclareMathOperator{\Aut}{Aut}
\DeclareMathOperator{\length}{{length}}
\newcommand{\into}{\hookrightarrow}
\newcommand{\wt}{\widetilde}
\newcommand{\wtilde}{\widetilde}
\newcounter{thisthm}
\newcommand{\ilabel}[1]{\newcounter{#1}\setcounter{thisthm}{\value{thm}}\setcounter{#1}{\value{enumi}}}
\newcommand{\iref}[1]{(\thesection.\the\value{thisthm}.\the\value{#1})}
\theoremstyle{plain}    
\newtheorem{thm}{Theorem}[section]
\newtheorem{defn}[thm]{Definition}
\newtheorem{assumption}[thm]{Assumption} 
\numberwithin{equation}{thm}
\numberwithin{figure}{section}
\theoremstyle{plain}    
\newtheorem{cor}[thm]{Corollary}
\newtheorem{lem}[thm]{Lemma}
\newtheorem{fact}[thm]{Fact}
\theoremstyle{plain}    
\newtheorem{prop}[thm]{Proposition}
\newtheorem{proclaim-special}[thm]{\specialthmname}
\theoremstyle{remark}
\newtheorem{rem}[thm]{Remark}
\newtheorem{subrem}[equation]{Remark}
\newtheorem{subclaim}[equation]{Claim} 
\newtheorem*{claim*}{Claim} 
\newtheorem{notation}[thm]{Notation}
\newtheorem{example}[thm]{Example}
\newtheoremstyle{bozont-remark}{3pt}{3pt}%
     {}
     {}
     {\it}
     {.}
     {.5em}
     {\thmname{#1}\thmnumber{ #2}: \thmnote{\sc #3}}
\theoremstyle{bozont-remark}
\def\factor#1.#2.{\left. \raise 2pt\hbox{$#1$} \right/\hskip -2pt\raise
  -2pt\hbox{$#2$}} 
\newcommand\CounterStep{\addtocounter{thm}{1}}
\newlength{\swidth}
\newenvironment{enumerate-p}{
  \begin{enumerate}}
  {\setcounter{equation}{\value{enumi}}\end{enumerate}}
\date{\today}
\author{Stefan Kebekus}
\author{S\'andor J.\ Kov\'acs}
\thanks{Stefan Kebekus was supported in part by the DFG-Forschergruppe
  ``Classification of Algebraic Surfaces and Compact Complex
  Manifolds''.  S\'andor Kov\'acs was supported in part by NSF Grant
  DMS-0554697 and the Craig McKibben and Sarah Merner Endowed
  Professorship in Mathematics.}
\address{Stefan Kebekus, Mathematisches Institut, Universit\"at zu
  K\"oln, Weyertal 86--90, 50931 K\"oln, Germany}
\email{\href{mailto:stefan.kebekus@math.uni-koeln.de}{stefan.kebekus@math.uni-koeln.de}}
\urladdr{\href{http://www.mi.uni-koeln.de/~kebekus}{http://www.mi.uni-koeln.de/$\sim$kebekus}}
\address{\noindent S\'andor Kov\'acs, University of Washington,
  Department of Mathematics, Box 354350, Seattle, WA 98195, U.S.A.}
\email{\href{mailto:kovacs@math.washington.edu}{kovacs@math.washington.edu}}
\urladdr{\href{http://www.math.washington.edu/~kovacs}{http://www.math.washington.edu/$\sim$kovacs}}
\newcommand{\PreprintAndPublication}[2]{#1} 
\title[Surfaces mapping to the moduli stack of canonically polarized
varieties]{The structure of surfaces mapping to the moduli stack of
  canonically polarized varieties}
\begin{document}

\begin{abstract}
  Generalizing the well-known Shafarevich hyperbolicity conjecture, it
  has been conjectured by Viehweg that a quasi-projective manifold
  that admits a generically finite morphism to the moduli stack of
  canonically polarized varieties is necessarily of log general type.
  Given a quasi-projective surface that maps to the moduli stack, we
  employ extension properties of logarithmic pluri-forms to establish
  a strong relationship between the moduli map and the minimal model
  program of the surface. As a result, we can describe the fibration
  induced by the moduli map quite explicitly. A refined affirmative
  answer to Viehweg's conjecture for families over surfaces follows as
  a corollary.
\end{abstract}

\maketitle
\tableofcontents

\section{Introduction and main results}

\subsection{Introduction}

Let $S^\circ$ be a quasi-projective manifold that admits a morphism
$\mu: S^\circ \to \mathfrak M$ to the moduli stack of canonically
polarized varieties. Generalizing the classical Shafarevich
hyperbolicity conjecture, \cite{Shaf63}, Viehweg conjectured in
\cite[6.3]{Viehweg01} that $S^\circ$ is necessarily of log general
type if $\mu$ is generically finite. Equivalently, if $f^\circ:
X^\circ \to S^\circ$ is a smooth family of canonically pol\-arized
varieties, then $S^\circ$ is of log general type as soon as the
variation of $f^\circ$ is maximal, i.e., $\Var(f^\circ)=\dim S^\circ$.
We refer to \cite{KK05}, for the relevant notions, for detailed
references, and for a brief history of the problem.

Viehweg's conjecture was confirmed for $2$-dimensional manifolds
$S^\circ$ in \cite{KK05}; see also \cite{KS06}. Here, we complete the
picture. The cornerstone of the proof is an extension theorem for
logarithmic pluri-forms, Theorem~\ref{thm:logformextension}.  This
theorem and its consequences are used to establish a strong
relationship between the moduli map $\mu$ and the logarithmic minimal
model program of the surface $S^\circ$. This allows us to give a
complete description of the moduli map in those cases where the
variation cannot be maximal: the logarithmic minimal model program
always ends with a fiber space, and the family comes from the base of
this fibration, at least birationally and after suitable \'etale
cover.  Previous results and a refined affirmative answer to Viehweg's
conjecture for families over surfaces follow as a corollary.

The proof of our main result is rather conceptual and completely
independent of the argumentation of \cite{KK05} which essentially
relied on combinatorial arguments for curve arrangements on surfaces
and on Keel-McKernan's solution to the Miyanishi conjecture in
dimension 2\PreprintAndPublication{, \cite{KMcK}}{}.  The present
proof, besides giving a more complete picture, does not depend on the
Keel-McKernan result at all. Many of the techniques introduced here
generalize well to higher dimensions; most others at least
conjecturally.

\subsection{Main results}

The following is the main result of this paper.

\begin{thm}\label{thm:mainresult}
  Let $f^\circ: X^\circ \to S^\circ$ be a smooth projective family of
  canonically polarized varieties over a quasi-projective surface
  $S^\circ$ and $S$ a compactification of $S^\circ$ such that $D:=
  S\setminus S^\circ$ is a divisor with simple normal crossings.
  Assume that $\Var(f^\circ)>0$.
  
  Then $\kappa(S^\circ) \not = 0$. Furthermore, if $\kappa(S^\circ) <
  2$, then any log minimal model program of the pair $(S,D)$ will
  terminate at a fiber space, and the moduli map factors through the
  induced fibration of $S^\circ$. More precisely, we have the
  following:
  \begin{enumerate}
  \item If $\kappa({S^\circ})=-\infty$, then there exists an open set
    $U \subset S^\circ$ of the form $U = V \times \bA^1$ such that
    $X^\circ\resto U$ is the pull-back of a family over $V$. In
    particular, $\Var(f^\circ) = 1$.
  \item If $\kappa({S^\circ})=1$, then there exists an open set $U
    \subset S^\circ$ and a Cartesian diagram of one of the following
    two types, %
    \vskip-2.2em 
    $$
     \hskip1.3cm
    \xymatrix{%
      {\widetilde {U}} \ar[rr]^{\gamma}_{\text{\'etale}}
      \ar[d]_{\txt{\scriptsize $\widetilde \pi$\\\scriptsize elliptic
          \\ \scriptsize \quad fibration\quad\ }} && {{U}}
      \ar[d]^{\txt{\scriptsize $\pi$\vphantom{$\widetilde\pi$}\\
          \scriptsize elliptic \\
          \scriptsize \quad fibration\quad\ }} \\
      {\widetilde V} \ar[rr]_{\text{\'etale}} && {V}}
    \begin{minipage}[c]{.02\linewidth}
      \ \\
      \ \\
      \begin{center}
        \text{{or}}
      \end{center}
    \end{minipage}
    \xymatrix{%
      {\widetilde {U}} \ar[rr]^{\gamma}_{\text{\'etale}}
      \ar[d]_{\txt{\scriptsize
          $\widetilde \pi$\\ \scriptsize smooth\\
          \scriptsize algebraic\\ \scriptsize \quad
          $\bC^*$-bundle\quad\ }}
      && {{U}} \ar[d]^{\txt{\scriptsize $\pi\vphantom{\widetilde\pi}$\\
          \scriptsize smooth,  \\
          \scriptsize algebraic\\ \scriptsize \quad
          $\bC^*$-bundle\quad\ }}\\
      {\widetilde V} \ar@{=}[rr] && {V}}
    $$
    such that $X^\circ \times_U \tilde U$ is the pull-back of a family
    over $\widetilde V$. In particular, $\Var(f^\circ) = 1$.
  \end{enumerate}
\end{thm}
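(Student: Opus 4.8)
The proof rests on two inputs: the existence of Viehweg--Zuo sheaves and the extension theorem for logarithmic pluri-forms, Theorem~\ref{thm:logformextension}; everything else is an analysis of the log minimal model program. \emph{Step~1 --- a logarithmic pluri-form of positive Kodaira dimension.} Since $D$ is a simple normal crossings divisor and $\Var(f^\circ)>0$, the Viehweg--Zuo construction in its logarithmic form produces an integer $m>0$ and an invertible subsheaf
\[
 \sA \ \subseteq\ \bigl(\Sym^m\Omega^1_S(\log D)\bigr)^{**}
 \qquad\text{with}\qquad
 \kappa(S,\sA)\ \ge\ \Var(f^\circ)\ \ge\ 1 .
\]
The idea is to propagate $\sA$ along a log minimal model program of $(S,D)$ and then to read off, fiber by fiber of the resulting fibration, the structure of the moduli map $\mu$.

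\emph{Step~2 --- running the log minimal model program.} As $\kappa(S^\circ)=\kappa(S,K_S+D)<2=\dim S$, the pair $(S,D)$ is not of log general type, so a log minimal model program for surfaces
\[
 (S,D)=(S_0,D_0)\ratmap\cdots\ratmap(S_N,D_N)=:(\bar S,\bar D)
\]
terminates either with $K_{\bar S}+\bar D$ nef or with a Mori fiber space $\pi\colon\bar S\to B$. Each elementary step contracts a $(K+D)$-negative curve, and Theorem~\ref{thm:logformextension} guarantees that logarithmic pluri-forms extend across such contractions; hence $\sA$ descends to an invertible $\bar\sA\subseteq\bigl(\Sym^m\Omega^1_{\bar S}(\log\bar D)\bigr)^{**}$ with $\kappa(\bar S,\bar\sA)\ge 1$, and $\mu$ is unchanged birationally. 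If $K_{\bar S}+\bar D$ is nef, the (log-)semistability of $\Omega^1_{\bar S}(\log\bar D)$ with respect to $K_{\bar S}+\bar D$ precludes a line subsheaf of its reflexive symmetric powers of positive Kodaira dimension once $\kappa(\bar S,K_{\bar S}+\bar D)\le 0$; since $\kappa(\bar S,K_{\bar S}+\bar D)=\kappa(S^\circ)$, this forces $\kappa(S^\circ)=1$, and abundance for surfaces makes $K_{\bar S}+\bar D$ semiample with Iitaka fibration $\bar\pi\colon\bar S\to B$. If instead the program ends with a Mori fiber space, the base cannot be a point, since the log del Pezzo case is incompatible with the existence of $\bar\sA$ (e.g. by restricting to the rational curves covering $\bar S$). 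In particular $\kappa(S^\circ)\ne 0$, and whenever $\kappa(S^\circ)<2$ the program yields a fibration $\bar\pi\colon\bar S\to B$ onto a smooth projective curve.

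\emph{Step~3 --- the moduli map factors, and the classification.} Let $F$ be a general fiber of $\bar\pi$ and $\delta:=\#(\bar D\cap F)$. One has $F^2=0$ and $(K_{\bar S}+\bar D)\cdot F\le 0$, with equality exactly in the nef case and strict inequality (so $F\cong\bP^1$) in the Mori fiber space case; adjunction $2p_a(F)-2+\delta=(K_{\bar S}+\bar D)\cdot F$ then leaves $(p_a(F),\delta)\in\{(0,0),(0,1),(0,2),(1,0)\}$, that is, $F\cap S^\circ$ is isomorphic to $\bP^1$, $\bA^1$, $\bC^*$, or a smooth elliptic curve. Each of these has non-positive log Kodaira dimension, so by the classical case of Viehweg's conjecture for curves the restricted family has vanishing variation; hence $\mu$ is constant on the general fiber of $\bar\pi$, so it factors through $B$, and $\Var(f^\circ)\le 1$, forcing $\Var(f^\circ)=1$. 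Finally, $\bP^1$- or $\bA^1$-fibers force $\kappa(S^\circ)=-\infty$ (a $\bP^1$-bundle compactification shows $K+\text{boundary}$ is not pseudo-effective), and then $\bar S$, being birational to a $\bP^1$-bundle over $B$, contains a trivialising open $V\times\bA^1$ on which $\mu$ --- being constant on each $\{v\}\times\bA^1$ --- factors through $V$, which is assertion~(1); while $\bC^*$- or elliptic fibers force $\kappa(S^\circ)=1$ and exhibit $\bar\pi$ as a smooth algebraic $\bC^*$-bundle, respectively an elliptic fibration, over $B$, so that after the finite \'etale covers needed to remove multiple fibers, to untwist the $\bC^*$-monodromy, and to realise the factorisation of $\mu$ at the level of the moduli stack, one obtains the Cartesian diagrams of assertion~(2).

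\emph{The main obstacle.} The heart of the argument is Step~2: transporting the Viehweg--Zuo sheaf $\sA$ through the log minimal model program without losing Kodaira dimension --- precisely what Theorem~\ref{thm:logformextension} is designed for --- and extracting from its positivity that the program must end at a fibration rather than at a nef model with $\kappa\le 0$ or at a log del Pezzo surface. Once $\bar\pi$ is in place and $\mu$ is known to factor through $B$, Step~3 is a careful but essentially routine classification of the resulting ruled and elliptic surfaces, the finite \'etale covers in the statement being forced by multiple fibres, monodromy, and the stacky nature of $\mathfrak M$.
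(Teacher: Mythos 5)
Your overall framework---Viehweg--Zuo sheaf $\sA$, propagation through the log MMP via the extension theorem, reduction to fibrations, and then the unwinding via \'etale covers---matches the paper's, and your Steps 1 and 3 and the $\kappa(S^\circ)=-\infty$ branch are essentially correct in outline. The gap is in Step 2, precisely where the paper's argument is hardest: the exclusion of $\kappa(S^\circ)=0$.

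You assert that ``the (log-)semistability of $\Omega^1_{\bar S}(\log\bar D)$ with respect to $K_{\bar S}+\bar D$ precludes a line subsheaf of its reflexive symmetric powers of positive Kodaira dimension once $\kappa(\bar S,K_{\bar S}+\bar D)\le 0$.'' This does not stand up. When $\kappa(\bar S,K_{\bar S}+\bar D)=0$, abundance makes $K_{\bar S}+\bar D$ numerically trivial, so slope-stability ``with respect to $K_{\bar S}+\bar D$'' is not even meaningful, and semistability of $\Omega^1_{\bar S}(\log\bar D)$ with respect to some auxiliary polarization $H$ is not a known fact in this setting --- it is exactly what the existence of $\bar\sA$ with $\kappa(\bar\sA)>0$ \emph{contradicts} (restricting to a general complete-intersection curve, $\bar\sA$ has positive degree while $\Sym^n\Omega^1_{\bar S}(\log\bar D)$ has slope zero, so $\Omega^1_{\bar S}(\log\bar D)$ is \emph{not} semistable). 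You cannot invoke semistability to rule out what you want; the paper instead exploits the failure of semistability. Concretely, the paper's Section~\ref{sec:k0} passes to the index-one cover $(\wtilde S_\lambda,\wtilde D_\lambda)$ of a log-minimal model (Proposition~\ref{prop:index-cover}), proves uniruledness of the cover and non-emptiness of its boundary via BDPP and the Miyaoka-type criterion of \cite{KST07} applied to the destabilizing subsheaf (Proposition~\ref{prop:unirulednessofbaseK0}, Corollary~\ref{cor:boundarynonempty}), runs a second MMP for $(\wtilde S_\lambda,(1-\varepsilon)\wtilde D_\lambda)$ which Theorem~\ref{thm:b2thm} forces to end in a Mori--Fano fiber space, restricts $\wtilde\sA_\lambda$ to a boundary component $\wtilde D'_\lambda$ that every general fiber meets (Lemmas~\ref{lem:restofposkod} and \ref{lem:pluriformsondprime}), and finally derives a contradiction from $\deg\Omega^1_{\wtilde D'_\lambda}(\log\wtilde D''_\lambda)=0$. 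None of this is captured by your one-line semistability claim, and the downstream classification in your Step~3 (``$\bC^*$- or elliptic fibers force $\kappa(S^\circ)=1$'') presupposes precisely the exclusion of $\kappa=0$ that has not been established. Secondary remark: the ``log del Pezzo case is incompatible with the existence of $\bar\sA$ (e.g.\ by restricting to rational curves)'' is also too quick --- the paper needs the full strength of Theorem~\ref{thm:b2thm}, which pulls the Viehweg--Zuo sheaf back to a log resolution via the extension theorem and invokes Bogomolov--Sommese, because the singular pair itself does not directly support that vanishing.
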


\begin{rem}
  Neither the compactification $S$ nor the minimal model program
  discussed in Theorem~\ref{thm:mainresult} is unique. When running
  the minimal model program, one often needs to choose the extremal
  ray that is to be contracted.
\end{rem}

A somewhat more precise version of Viehweg's conjecture for surfaces
also follows as an immediate consequence of
Theorem~\ref{thm:mainresult}\PreprintAndPublication{, cf.\
  \cite[Conjecture 1.6]{KK05}}{}.

\begin{cor}[\protect{Viehweg's conjecture for surfaces, \cite[Thm.~1.4]{KK05}}] 
  Let $f^\circ: X^\circ \to S^\circ$ be a smooth projective family of
  canonically polarized varieties over a quasi-projective surface
  $S^\circ$. Then either $\kappa(S^\circ)=-\infty$ and $\Var(f^\circ)
  < \dim S^\circ$, or $\Var(f^\circ) \leq \kappa(S^\circ)$. \qed
\end{cor}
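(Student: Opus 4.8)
The plan is to obtain the corollary as an immediate consequence of Theorem~\ref{thm:mainresult}, via a case distinction on the value of $\Var(f^\circ)$. First I would fix a compactification $S \supset S^\circ$ with $D := S \setminus S^\circ$ a simple normal crossings divisor; this exists by Hironaka's theorem, and the logarithmic Kodaira dimension $\kappa(S^\circ)$ does not depend on the choice. Since $S^\circ$ is a surface, $\Var(f^\circ)$ takes one of the values $0$, $1$, $2$.

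If $\Var(f^\circ) = 0$ the statement is purely formal: when $\kappa(S^\circ) = -\infty$ the first alternative holds because $0 = \Var(f^\circ) < 2 = \dim S^\circ$, and when $\kappa(S^\circ) \geq 0$ the second alternative $\Var(f^\circ) \leq \kappa(S^\circ)$ holds. If $\Var(f^\circ) > 0$, then Theorem~\ref{thm:mainresult} applies and in particular yields $\kappa(S^\circ) \neq 0$, which is the crucial input. I would then run through the three remaining possibilities for $\kappa(S^\circ)$: if $\kappa(S^\circ) = -\infty$, part~(1) of the theorem forces $\Var(f^\circ) = 1 < 2 = \dim S^\circ$, so the first alternative holds; if $\kappa(S^\circ) = 1$, the theorem's assertion that $\kappa(S^\circ) < 2$ implies $\Var(f^\circ) = 1$ gives $\Var(f^\circ) = 1 \leq \kappa(S^\circ)$; and if $\kappa(S^\circ) = 2$ then trivially $\Var(f^\circ) \leq 2 = \kappa(S^\circ)$. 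In every case one of the two alternatives is satisfied.

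I do not expect any real obstacle: the corollary is a bookkeeping reformulation of Theorem~\ref{thm:mainresult}. The only point worth isolating is that the single case in which $\Var(f^\circ)$ might a priori exceed $\kappa(S^\circ)$ while $\kappa(S^\circ) \neq -\infty$ --- namely $\kappa(S^\circ) = 0$ --- is exactly the one excluded by the first assertion of the theorem, so the stated dichotomy is exhaustive.
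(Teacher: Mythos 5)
Your case analysis is correct and is precisely the "immediate consequence" the paper has in mind; the paper itself gives no proof beyond the \qed, so your bookkeeping fills in exactly the intended argument. No further comment needed.
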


\subsection{Conventions and notation}

Throughout the present paper we work over the complex number field.
When dealing with sheaves that are not necessarily locally free, we
frequently use square brackets to indicate taking the reflexive hull.

\begin{notation}
  Let $Y$ be a normal variety and $\sA$ a coherent sheaf of
  $\O_Y$-modules. Let $n\in \bN$ and set $\sA^{[n]} := (\sA^{\otimes
    n})^{**}$, $\Sym^{[n]} \sA := (\Sym^n \sA)^{**}$, etc. Likewise,
  for a morphism $f : X \to Y$ of normal varieties, set $f^{[*]}\sA :=
  (f^*\sA)^{**}$.
\end{notation}

We will later discuss the Kodaira dimension of singular pairs and the
Kodaira-Iitaka dimension of reflexive sheaves on normal spaces. Since
this is perhaps not quite standard, we recall the definition here.

\begin{notation}
  Let $Y$ be a normal projective variety and $\sA$ a reflexive sheaf
  of rank one on $Y$.  If $h^0\bigl(Y,\, \sA^{[n]}\bigr) = 0$ for all
  $n \in \mathbb N$, then we say that $\sA$ has Kodaira-Iitaka
  dimension $\kappa(\sA) := -\infty$.  Otherwise, recall that the
  restriction of $\sA$ to the smooth locus of $Y$ is locally free and
  consider the rational mapping
  $$
  \phi_n : Y \dasharrow \mathbb P\bigl(H^0\bigl(Y,\,
  \sA^{[n]}\bigr)^*\bigr).
  $$
  The Kodaira-Iitaka dimension of $\sA$ is then defined as
  $$
  \kappa(\sA) := \max_{n \in \mathbb N} \bigl(\dim
  \overline{\phi_n(Y)}\bigr).
  $$
  If $D \subset Y$ is an effective Weil divisor, define the Kodaira
  dimension of the pair $(Y,D)$ as $\kappa(Y,D) := \kappa
  \bigl(\sO_Y(K_Y+D) \bigr)$. If $Y$ is smooth and $D$ is a simple
  normal crossing divisor, define the Kodaira dimen\-sion of the
  complement $Y^\circ=Y\setminus D$ as $\kappa(Y^\circ)
  :=\kappa(Y,D)$.  Recall, that $\kappa(Y^\circ)$ is in\-dependent of
  the choice of the compactification $Y$.
\end{notation}

\subsection{Outline of proof, outline of paper}

The technical core of this paper is the extension result for pluri-log
forms, formulated in Theorems~\ref{thm:logformextension} and
\ref{thm:VZsheafextension2} of Section~\ref{sec:extendingForms}. In
essence, it states the following: If $(S,D)$ is a pair of a smooth
surface and a reduced divisor with simple normal crossings,
$(S_\lambda, D_\lambda)$ a log-minimal model, and $\sA_\lambda \subset
\Sym^{[n]} \Omega^1_{S_\lambda}(\log D_\lambda)$ any rank-one
reflexive sheaf of pluri-log forms, then $\sA_\lambda$ pulls back to a
reflexive sheaf of pluri-log forms in $\Sym^n \Omega^1_S(\log D')$,
where $D'$ is a divisor that is only slightly larger than $D$. Under
the conditions of Theorem~\ref{thm:mainresult}, a fundamental result
of Viehweg and Zuo asserts that a rank-one reflexive subsheaf
$\sA_\lambda\subset \Sym^{[n]} \Omega^1_{S_\lambda}(\log D_\lambda)$
of positive Kodaira-Iitaka dimension always exists.

The extension theorem is applied, e.g., in Section~\ref{sec:VZ}, in
order to give a criterion that is later used to show the fiber space
structure of certain minimal models. For an idea of the statement and
its proof, consider the setup of Theorem~\ref{thm:mainresult} in the
simplest case where $\kappa(S^\circ) = -\infty$. The log-minimal model
$(S_\lambda, D_\lambda)$ will then either be log-Fano of Picard-number
one, or a Mori-Fano fiber space. To show that $(S_\lambda, D_\lambda)$
is a Mori-Fano fiber space, we argue by contradiction and assume that
$\rho(S_\lambda) = 1$. Using this assumption and the existence of
$\sA_\lambda$, an analysis of the stability of
$\Omega^{[1]}_{S_\lambda}(\log D_\lambda)$ yields the existence of a
$\mathbb Q$-ample rank-one subsheaf $\sB_\lambda \subset
\Omega^{[1]}_{S_\lambda}(\log D_\lambda)$. The Extension Theorem will
then show the existence of a big invertible subsheaf $\sB \subset
\Omega^1_S(\log D')$. This, however, contradicts the well-known
Bogomolov-Sommese vanishing result, and the existence of a fiber space
structure is shown.

The argumentation in case $\kappa(S^\circ)=0$ follows a similar
outline, but is technically much more involved.
Section~\ref{sec:indexcoverk0} gathers results that are particular to
the case $\kappa=0$, work in any dimension and may be of independent
interest. The detailed description of the moduli map for fiber spaces
is done in a unified framework in Section~\ref{sec:gluearama}.

\subsection{Acknowledgments}

The work on which this article is based was finished while both
authors participated in the workshop ``Rational curves on algebraic
varieties'' at the American Institute of Mathematics in May, 2007. We
would like to thank the AIM for the stimulating atmosphere. We would
also like to thank J\'anos Koll\'ar for valuable suggestions that
undoubtedly made the article better, and Duco van Straten for a number
of discussions on the extension problem.

\part{TECHNIQUES}

\section{Extending pluri-forms over subvarieties of codimension one}
\label{sec:extendingForms}

If $X$ is a surface, $E \subset X$ a $(-1)$-curve and $\omega \in
H^0\bigl(X,\, \Omega^p_X(*E)\bigr)$ a $p$-form that is allowed to have
arbitrary poles along $E$, then an elementary computation shows that
$\omega$ is in fact everywhere regular on $X$, i.e., $\omega \in
H^0\bigl(X,\, \Omega^p_X\bigr)$.

Much of the argumentation in this paper is based on the observation
that a slightly weaker result also holds for pluri-log forms, and for
somewhat larger classes of divisors. \PreprintAndPublication{We refer
  to \cite{SS85, Flenner88} for more general extension results that
  apply to holomorphic $p$-forms.}{}

\subsection{Notation and standard facts about logarithmic differentials}

We introduce notation and recall two standard facts before stating and
proving the extension result in Section~\ref{sec:ext-ext} below. These
make sense and will be used both in the algebraic and in the analytic
category. We refer to \cite[Chapt.~11c]{Iitaka82} and
\cite[Chap.~3]{Deligne70} for details and proofs.

\begin{defn}\label{def:everythinglog}
  A \emph{reduced pair} $(Z,\Delta)$ consists of a normal variety $Z$
  and a reduced, but not necessarily irreducible Weil divisor $\Delta
  \subset Z$.  A \emph{morphism of reduced pairs} $\gamma: (\wtilde Z,
  \wtilde \Delta) \to (Z,\Delta)$ is a morphism $\gamma: \wtilde Z \to
  Z$ such that $\gamma^{-1}(\Delta) = \wtilde \Delta$
  set-theoretically.
  
  A reduced pair is called \emph{log smooth} if $Z$ is smooth and
  $\Delta$ has simple normal crossings. Given a reduced pair
  $(Z,\Delta)$, let $(Z,\Delta)_{\reg}$ be the maximal open set of $Z$
  where $(Z,\Delta)$ is log-smooth, and let $(Z,\Delta)_{\sing}$ be
  its complement, with the structure of a reduced subscheme. By a
  \emph{log-resolution} of $(Z, \Delta)$ we will mean a birational
  morphism of pairs $\gamma: (\wtilde Z, \wtilde \Delta) \to
  (Z,\Delta)$ where $(\wtilde Z, \wtilde \Delta)$ is log-smooth, and
  $\gamma$ is isomorphic along $(Z, \Delta)_{\reg}$.
\end{defn}

\begin{fact}[\protect{\cite{Hir62}}]
  Let $(Z,\Delta)$ be a reduced pair. Then a log-resolution exists.
  If $(Z,\Delta)$ is log-smooth, then the sheaf of log-differentials
  $\Omega^1_Z(\log \Delta)$ is locally free.  \qed
\end{fact}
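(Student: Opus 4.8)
The statement falls into two independent parts, the existence of a log-resolution and the local freeness of $\Omega^1_Z(\log\Delta)$ in the log-smooth case, of which only the first carries genuine content. For the existence of a log-resolution I would bootstrap from Hironaka's resolution theorem in characteristic zero, in its strong functorial form, in which the resolution is an isomorphism over the open locus where the geometry is already as desired. First I would apply Hironaka to the normal variety $Z$ to get a proper birational $\pi_1\colon Z_1\to Z$ with $Z_1$ smooth and $\pi_1$ an isomorphism over $Z\setminus\sing Z$. Let $\Delta_1\subset Z_1$ be the reduced divisor supported on the union of the $\pi_1$-exceptional divisors with the strict transform of $\Delta$, and apply embedded resolution, again in its strong form, to $(Z_1,\Delta_1)$, obtaining a proper birational $\pi_2\colon Z_2\to Z_1$, an isomorphism over the locus where $\Delta_1$ already has simple normal crossings, after which the reduced total transform of $\Delta_1$ has simple normal crossing support. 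Setting $\widetilde Z:=Z_2$, $\gamma:=\pi_1\circ\pi_2$ and $\widetilde\Delta:=(\gamma^{-1}\Delta)_{\red}$, the pair $(\widetilde Z,\widetilde\Delta)$ is log smooth, $\gamma^{-1}(\Delta)=\widetilde\Delta$ holds set-theoretically, and over $(Z,\Delta)_{\reg}$---where $Z$ is smooth and $\Delta$ is simple normal crossing---neither $\pi_1$ nor $\pi_2$ modifies anything, so $\gamma$ is isomorphic along $(Z,\Delta)_{\reg}$; this is exactly a log-resolution in the sense of Definition~\ref{def:everythinglog}.

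For the local freeness the claim is local on $Z$, so I would fix $p\in Z$. Since $(Z,\Delta)$ is log smooth, $Z$ is smooth of dimension $n$ near $p$ and there are coordinates $z_1,\dots,z_n$ centred at $p$ with $\Delta=\{z_1\cdots z_k=0\}$ for some $0\le k\le n$. Following Deligne, $\Omega^1_Z(\log\Delta)$ is the subsheaf of $j_*\Omega^1_{Z\setminus\Delta}$, $j\colon Z\setminus\Delta\hookrightarrow Z$, consisting of forms $\omega$ such that $\omega$ and $d\omega$ have at worst logarithmic poles along $\Delta$, and the local computation carried out in \cite[Chap.~3]{Deligne70} and \cite[Chapt.~11c]{Iitaka82} shows that near $p$ these are precisely the $\O_Z$-linear combinations of
$$
\frac{dz_1}{z_1},\ \dots,\ \frac{dz_k}{z_k},\ dz_{k+1},\ \dots,\ dz_n .
$$
These $n$ forms restrict to a frame of $\Omega^1_Z$ at every point of $Z\setminus\Delta$ near $p$, hence are $\O_Z$-linearly independent; so $\Omega^1_Z(\log\Delta)$ is free of rank $n=\dim Z$ near $p$, and therefore locally free.

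The main obstacle is entirely in the first part: one needs not only Hironaka's theorem but the refinement asserting that the modification may be chosen to be an isomorphism over the already-log-smooth locus, so that the requirement that $\gamma$ be isomorphic along $(Z,\Delta)_{\reg}$ is met---a deep result. Everything else, including the bookkeeping with strict and total transforms and the freeness computation, is routine; the only mild point in the second part is that when $\Delta$ has simple normal crossings the generators displayed above automatically satisfy the auxiliary condition on $d\omega$, so it imposes nothing further.
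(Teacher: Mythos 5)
The paper offers no proof of this \emph{Fact}: it is stated with a citation to Hironaka \cite{Hir62} (and, for the local-freeness clause, implicitly to the references \cite[Chapt.~11c]{Iitaka82} and \cite[Chap.~3]{Deligne70} quoted at the head of the subsection). Your sketch is the standard way to extract the statement from those sources---resolve $Z$, then resolve the union of the strict transform of $\Delta$ with the $\pi_1$-exceptional divisors, invoking the strong form of Hironaka that leaves the already-log-smooth locus untouched so that $\gamma$ is an isomorphism along $(Z,\Delta)_{\reg}$; and the local frame $dz_1/z_1,\dots,dz_k/z_k,dz_{k+1},\dots,dz_n$ is exactly Deligne's computation. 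One point you pass over too quickly: for $\gamma$ to be a morphism of reduced pairs in the sense of Definition~\ref{def:everythinglog} you need $\gamma^{-1}(\Delta)$ to be \emph{purely divisorial}, which your two-step construction does not automatically deliver---an exceptional divisor produced in resolving $Z$ may map onto a positive-dimensional center that meets $\Delta$ without being contained in it, so that its intersection with $\gamma^{-1}(\Delta)$ has codimension $\geq 2$. This is fixable (e.g., by additionally principalizing the ideal of $\Delta$, so that each blow-up center is either contained in the current transform of $\Delta$ or disjoint from it), but it is a genuine extra requirement on the resolution procedure and deserves a word. Everything else, including the observation that the SNC hypothesis makes the condition on $d\omega$ vacuous, is correct.
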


\begin{fact}\label{fact:pullback}
  Let $\gamma: (\wtilde Z, \wtilde \Delta) \to (Z,\Delta)$ be a
  morphism of log-smooth reduced pairs, $U\subseteq Z$ an open set and
  $\wtilde U=\gamma^{-1}(U)$. Then there exists a natural pull-back
  map of log-forms
  $$
  \gamma^* :H^0\bigl(U,\, \Omega^1_Z(\log \Delta)\bigr) \to
  H^0\bigl(\wtilde U,\, \Omega^1_{\wtilde Z}(\log \wtilde \Delta)\bigr).
  $$
  and an associated sheaf morphism
  $$
  d\gamma: \gamma^*(\Omega^1_Z(\log \Delta)) \to \Omega^1_{\wtilde
    Z}(\log \wtilde \Delta).
  $$
  If $\gamma$ is finite and unramified over $Z \setminus \Delta$,
  then $d\gamma$ is isomorphic.  \qed
\end{fact}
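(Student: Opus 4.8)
The plan is to reduce both assertions to computations in local coordinates adapted to the two log structures. Fix a point $\wtilde p\in\wtilde Z$ with image $p:=\gamma(\wtilde p)$, and choose coordinates $z_1,\dots,z_n$ near $p$ and $w_1,\dots,w_m$ near $\wtilde p$ with $\Delta=\{z_1\cdots z_k=0\}$ and $\wtilde\Delta=\{w_1\cdots w_l=0\}$ locally, so that $\Omega^1_Z(\log\Delta)$ is freely generated near $p$ by $\tfrac{dz_1}{z_1},\dots,\tfrac{dz_k}{z_k},dz_{k+1},\dots,dz_n$, and similarly for $\wtilde Z$ near $\wtilde p$.

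To construct $\gamma^*$ and $d\gamma$, I would first observe that because $\gamma^{-1}(\Delta)=\wtilde\Delta$ set-theoretically the ordinary pull-back of meromorphic forms already carries germs of $\Omega^1_Z(*\Delta)$ to germs of $\Omega^1_{\wtilde Z}(*\wtilde\Delta)$, so it suffices to check that the displayed local generators of $\Omega^1_Z(\log\Delta)$ are sent into $\Omega^1_{\wtilde Z}(\log\wtilde\Delta)$. For $i>k$ this is immediate, since $\gamma^*dz_i=d(\gamma^*z_i)$ is holomorphic. For $i\le k$ the key point is that the zero locus of $\gamma^*z_i$ lies in $\gamma^{-1}(\Delta)=\wtilde\Delta$, and the local ring at $\wtilde p$ is regular — hence factorial, with $(w_1),\dots,(w_l)$ the only height-one primes supported on $\wtilde\Delta$ — so that $\gamma^*z_i=u_i\,w_1^{a_{i1}}\cdots w_l^{a_{il}}$ for a unit $u_i$ and exponents $a_{ij}\in\bN$. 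Then
$$
\gamma^*\frac{dz_i}{z_i}=\frac{d(\gamma^*z_i)}{\gamma^*z_i}=\frac{du_i}{u_i}+\sum_{j=1}^{l}a_{ij}\,\frac{dw_j}{w_j}
$$
lies in $\Omega^1_{\wtilde Z}(\log\wtilde\Delta)$, because $du_i/u_i$ is a holomorphic $1$-form. Since these local descriptions are $\O_{\wtilde Z}$-linear, they glue to the sheaf morphism $d\gamma$, and composing $H^0(d\gamma)$ with the adjunction $H^0(U,\Omega^1_Z(\log\Delta))\to H^0(\wtilde U,\gamma^*\Omega^1_Z(\log\Delta))$ produces $\gamma^*$; functoriality in $\gamma$ and compatibility with the usual pull-back of regular forms are immediate from this construction.

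For the isomorphism statement I would argue as follows. If $\gamma$ is finite then $m=\dim\wtilde Z=\dim Z=:n$, so $d\gamma$ is a morphism of locally free sheaves of the same rank $n$, hence an isomorphism if and only if the induced map of determinants is. By hypothesis $\gamma$ is unramified, hence \'etale, over $Z\setminus\Delta$, so the ordinary differential — and with it $\det(d\gamma)$ — is an isomorphism over $\wtilde Z\setminus\wtilde\Delta$; in particular $\det(d\gamma)$ is a nonzero, hence injective, morphism of line bundles, and its cokernel is supported on an effective divisor $R'\subseteq\wtilde\Delta$ determined by $\det\Omega^1_{\wtilde Z}(\log\wtilde\Delta)\cong\gamma^{*}\det\Omega^1_Z(\log\Delta)\otimes\O_{\wtilde Z}(R')$. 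It remains to show $R'=0$. Using the standard identification $\det\Omega^1_X(\log\Delta)\cong\O_X(K_X+\Delta)$, one obtains $R'=R+\wtilde\Delta-\gamma^{*}\Delta$, where $R$ is the ramification divisor of $\gamma$. Writing $\wtilde\Delta=\sum_j\wtilde\Delta_j$ and letting $e_j$ be the ramification index of $\gamma$ along $\wtilde\Delta_j$: since $\gamma$ is finite, $\gamma(\wtilde\Delta_j)$ is a component $\Delta_{i(j)}$ of $\Delta$, a local equation of which pulls back with vanishing order exactly $e_j$ along $\wtilde\Delta_j$ while the local equations of the remaining components stay units there, so $\gamma^{*}\Delta=\sum_j e_j\wtilde\Delta_j$; and since we work in characteristic zero the ramification is tame, so $R=\sum_j(e_j-1)\wtilde\Delta_j$. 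Hence
$$
R'=R+\wtilde\Delta-\gamma^{*}\Delta=\sum_j\bigl((e_j-1)+1-e_j\bigr)\wtilde\Delta_j=0,
$$
so $\det(d\gamma)$ vanishes nowhere and $d\gamma$ is an isomorphism.

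I expect the last step to be the only genuinely non-formal point: the identification $R=\sum_j(e_j-1)\wtilde\Delta_j$ is precisely the classical computation of the different for a tamely ramified extension of discrete valuation rings, carried out at the generic point of each $\wtilde\Delta_j$, and it is the one place where the characteristic-zero hypothesis — or at least tameness — really enters. I would also be slightly careful with the bookkeeping, since a single component of $\Delta$ may have several preimage components; this is why the argument is phrased in terms of $\gamma^{*}\Delta$ and the individual $\wtilde\Delta_j$ rather than via a cyclic-cover normal form.
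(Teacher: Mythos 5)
The paper offers no proof of this statement: it is labeled a ``Fact'' and delegated to \cite[Chapt.~11c]{Iitaka82} and \cite[Chap.~3]{Deligne70}. Your argument is correct and is essentially the standard proof one would find there: the construction of $d\gamma$ via local coordinates and unique factorization of $\gamma^* z_i$ in the regular local ring is sound (including the implicit point that $\gamma(\wtilde Z)$ is not contained in any component of $\Delta$, which follows because otherwise $\gamma^{-1}(\Delta)$ would be all of $\wtilde Z$ rather than a divisor), and the isomorphism in the finite, unramified-outside-$\Delta$ case is correctly reduced to the determinant line, where Riemann--Hurwitz plus $\det\Omega^1(\log\Delta)=\O(K+\Delta)$, $\gamma^*\Delta=\sum e_j\wtilde\Delta_j$ and tameness of the ramification give the cancellation $R'=R+\wtilde\Delta-\gamma^*\Delta=0$. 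Your closing remarks about where tameness enters and about the bookkeeping for several preimage components lying over one component of $\Delta$ are exactly the right cautions. Nothing to fix.
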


\begin{subrem}
  The pull-back morphism also gives a pull-back of pluri-log forms,
  $$
  \gamma^* : H^0\bigl(Z,\,\Sym^n \Omega^1_Z(\log \Delta)\bigr) \to
  H^0\bigl(\wtilde Z,\, \Sym^n \Omega^1_{\wtilde Z}(\log \wtilde
  \Delta)\bigr),
  $$
  that obviously extends to a pull-back of rational forms.
\end{subrem}

We state one immediate consequence of Fact~\ref{fact:pullback} for
future reference.

\begin{cor}\label{cor:pb2}
  Under the conditions of Fact~\ref{fact:pullback}, assume that
  $\gamma$ is a finite morphism which is unramified over $Z \setminus
  \Delta$.  Let $E \subset Z$ be an effective divisor and $\sigma \in
  H^0\bigl(Z,\, \Sym^n\bigl(\Omega^1_Z(\log \Delta)\bigr)(*E)\bigr)$ a
  pluri-log form that might have poles along $E$.
  
  Then $\sigma$ has no poles along $E$, i.e., $\sigma \in
  H^0\bigl(Z,\, \Sym^n \Omega^1_Z(\log \Delta)\bigr)$ if and only if
  $\gamma^*(\sigma)$ has no poles along $\gamma^{-1}E$, i.e.,
  $\gamma^*(\sigma) \in H^0(\wtilde Z,\, \Sym^n \Omega^1_{\wtilde
    Z}(\log \wtilde \Delta))$.  \qed
\end{cor}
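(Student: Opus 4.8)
The plan is to deduce Corollary~\ref{cor:pb2} directly from Fact~\ref{fact:pullback} by a local computation along the divisor $E$. First I would reduce the statement to a purely local one. Having poles along $E$ is a codimension-one condition: the sheaf $\Sym^n\bigl(\Omega^1_Z(\log\Delta)\bigr)(*E)$ differs from $\Sym^n\Omega^1_Z(\log\Delta)$ only along $E$, so $\sigma$ is regular if and only if it is regular at the generic point of each irreducible component $E_i$ of $E$. Pick such a component $E_i$ and let $\eta$ be its generic point; since $\gamma$ is finite, $\gamma^{-1}(E_i)$ maps onto $E_i$ and contains at least one component $\wtilde E_j$ whose generic point $\wtilde\eta$ lies over $\eta$. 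Thus it suffices to show: $\sigma$ is regular at $\eta$ if and only if $\gamma^*\sigma$ is regular at $\wtilde\eta$.

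Next I would invoke the isomorphism $d\gamma\colon \gamma^*\bigl(\Omega^1_Z(\log\Delta)\bigr)\xrightarrow{\ \sim\ }\Omega^1_{\wtilde Z}(\log\wtilde\Delta)$ supplied by Fact~\ref{fact:pullback} — this applies because $\gamma$ is finite and unramified over $Z\setminus\Delta$ by hypothesis. Taking $\Sym^n$ of this isomorphism and pulling back to the local ring $\sO_{\wtilde Z,\wtilde\eta}$, one gets that $\gamma^*\bigl(\Sym^n\Omega^1_Z(\log\Delta)\bigr)\otimes\sO_{\wtilde Z,\wtilde\eta}$ is identified with $\Sym^n\Omega^1_{\wtilde Z}(\log\wtilde\Delta)\otimes\sO_{\wtilde Z,\wtilde\eta}$ compatibly with the pull-back of rational pluri-log forms described in the Remark following Fact~\ref{fact:pullback}. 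So in both the source and target we are comparing a rational pluri-log form with a distinguished free $\sO$-submodule, and the comparison is preserved under the flat — in fact faithfully flat, after localizing — ring map $\sO_{Z,\eta}\to\sO_{\wtilde Z,\wtilde\eta}$.

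Then I would conclude with a discrete-valuation-ring argument. Both $R:=\sO_{Z,\eta}$ and $\wtilde R:=\sO_{\wtilde Z,\wtilde\eta}$ are DVRs (local rings of normal varieties at codimension-one points), with uniformizers $t$ and $\wtilde t$, and $t = (\text{unit})\cdot\wtilde t^{\,e}$ in $\wtilde R$ where $e\geq 1$ is the ramification index of $\gamma$ along $E_i$. Writing $\sigma$ in a local basis of $\Sym^n\Omega^1_Z(\log\Delta)$ with coefficients in $\operatorname{Frac}(R)$, regularity of $\sigma$ at $\eta$ means these coefficients lie in $R$, i.e.\ have nonnegative $t$-valuation; regularity of $\gamma^*\sigma$ at $\wtilde\eta$ means the corresponding coefficients, viewed in $\operatorname{Frac}(\wtilde R)$ via the identification above, have nonnegative $\wtilde t$-valuation. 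Since $v_{\wtilde t}(c) = e\cdot v_t(c)$ for $c\in\operatorname{Frac}(R)$ and $e>0$, one valuation is nonnegative if and only if the other is, which gives the equivalence. The forward implication "$\sigma$ regular $\Rightarrow\gamma^*\sigma$ regular" is of course immediate and does not even need the DVR observation; the content is the converse, and the only genuinely delicate point is bookkeeping the identification of the two free modules under $d\gamma$ so that "the same coefficients" makes sense — but this is exactly what Fact~\ref{fact:pullback} buys us, so no serious obstacle remains. Finally, since the equivalence holds at every generic point of every component of $E$ on the source side (and every component of $\gamma^{-1}E$ lies over one of these by finiteness of $\gamma$), it globalizes to the stated biconditional.
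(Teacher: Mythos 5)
The paper gives no written proof of Corollary~\ref{cor:pb2}; it is marked as immediate from Fact~\ref{fact:pullback} (the \qed appears directly after the statement). Your argument is a correct and natural fill-in of that gap, and it is the argument the authors surely had in mind: use the isomorphism $d\gamma$ to identify $\gamma^*\Sym^n\Omega^1_Z(\log\Delta)$ with $\Sym^n\Omega^1_{\wtilde Z}(\log\wtilde\Delta)$, reduce to codimension one (both $Z$ and $\wtilde Z$ are smooth, so a rational section of a locally free sheaf with poles confined to $E$ is regular iff it is regular at the generic points of the components of $E$), and observe that under a finite surjective map of smooth varieties the discrete valuations at corresponding codimension-one points differ by a strictly positive ramification index, so nonnegativity of valuation is preserved in both directions. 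The only small caveat worth flagging: you implicitly use that every component of $E$ is dominated by a component of $\gamma^{-1}(E)$, which requires $\gamma$ to be surjective; this is not literally stated in the hypotheses of the corollary but is automatic in every application in the paper (index-one covers, normalizations of fiber products) and follows in any case once one adds, say, that $\gamma$ is dominant. With that understood, there is no gap.
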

  
\begin{notation}
  In the setup of Corollary~\ref{cor:pb2}, we say that ``$\sigma$ has
  poles as a pluri-log form if and only if $\gamma^*(\sigma)$ has
  poles as a pluri-log form''.
\end{notation}

\subsection{Finitely dominated pairs}
\label{sec:ext-fdp}

The formulation of the main extension result in
Theorem~\ref{thm:logformextension} uses the following notion, which
slightly generalizes quotient singularities.

\begin{defn}
  A reduced pair $(Z,\Delta)$ is said to be \emph{finitely dominated
    by smooth analytic pairs} if for any point $z \in Z$, there exists
  an analytic neighborhood $U$ of $z$ and a finite, surjective
  morphism of reduced pairs $(\wtilde U, \wtilde \Delta) \to
  (U,\Delta\cap U)$ where $(\wtilde U, \wtilde\Delta)$ is log-smooth.
\end{defn}

Surface singularities that appear in certain variants of the minimal
model program are often finitely dominated by smooth analytic pairs.
In the rest of this subsection we discuss a class of examples that
will become important later.

\begin{defn}\label{def:dlc}
  A reduced pair $(Z,\Delta)$ is called \emph{dlc} if $(Z,\Delta)$ is
  {lc} and $Z\setminus \Delta$ is {lt}.
\end{defn}

\begin{example}
  It follows immediately from the definition that \emph{dlt} pairs are
  \emph{dlc}. For a less obvious example, let $Z$ be the cone over a
  conic and $\Delta$ the union of two rays through the vertex. Then
  $(Z,\Delta)$ is \emph{dlc}, but not \emph{dlt}.
\end{example}

\begin{lem}\label{ex:dltsurfisfd}
  Let $(Z,\Delta)$ be a {dlc} pair of dimension $2$.  Then
  $(Z,\Delta)$ is finitely dominated by smooth analytic pairs.  In
  particular, if $(Z,\Delta)$ is {dlt}, then it is finitely dominated
  by smooth analytic pairs.
\end{lem}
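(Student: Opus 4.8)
The plan is to reduce the statement to a local, analytic problem and then use the classification of two-dimensional log canonical and log terminal singularities. First I would observe that the question is local on $Z$, so fix a point $z \in Z$ and work in a small analytic neighbourhood. There are two cases according to whether $z$ lies on $\Delta$ or not. If $z \notin \Delta$, then by hypothesis $(Z,\Delta)$ is $\mathrm{lt}$ near $z$, so $Z$ has a log terminal (hence, in dimension two, quotient) singularity at $z$; the minimal resolution, or rather the standard fact that surface log terminal singularities are quotient singularities, gives a finite surjective morphism from a smooth analytic germ $(\wtilde U,\emptyset) \to (U,\emptyset)$, which is exactly what is required (here $\wtilde\Delta=\emptyset$).

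The remaining, and main, case is $z \in \Delta$. Here the $\mathrm{dlc}$ hypothesis says $(Z,\Delta)$ is $\mathrm{lc}$. The key input is the classification of two-dimensional log canonical pairs $(Z,\Delta)$ with $\Delta\neq 0$: near such a point, up to analytic isomorphism and up to the action of a finite group, the germ is one of a short explicit list — smooth point with $\Delta$ a coordinate hyperplane or the union of the two coordinate axes, a cyclic quotient singularity with $\Delta$ the image of coordinate axes, the cone over a conic with $\Delta$ two rulings, a dihedral-type quotient, etc. In each case one exhibits the required finite cover directly: for the toric and quotient cases, the toric/orbifold chart $\bA^2 \to \bA^2/G$ is finite, surjective, log-smooth upstairs, and carries the boundary to the boundary; for the cone over a conic with $\Delta=$ two rays, the normalization of the fibre product with the $2:1$ cover $\bA^2 \to \{xy=z^2\}$ does the job. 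Rather than enumerate, the cleaner route is: pass to the index-one cover or to a suitable cyclic cover to reduce $(Z,\Delta)$ to a $\mathrm{plt}$ — in fact canonical or even smooth — boundary situation, invoking that a two-dimensional $\mathrm{lc}$ pair with reduced boundary, after such a cyclic cover branched appropriately, becomes $\mathrm{dlt}$, and then use the last sentence of the lemma.

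For the $\mathrm{dlt}$ statement (the ``in particular''), I would argue that a two-dimensional $\mathrm{dlt}$ pair is automatically $\mathrm{dlc}$ — $\mathrm{dlt}$ implies $\mathrm{lc}$ always, and away from $\Delta$ a $\mathrm{dlt}$ pair is $\mathrm{klt}$, hence $\mathrm{lt}$ — so it follows from the general case; alternatively one cites that $\mathrm{dlt}$ surface pairs have quotient singularities compatible with the boundary (they are ``toroidal'' in the analytic sense), which gives the finite log-smooth cover immediately. The honest way to organise the write-up is therefore: (1) reduce to the local analytic germ; (2) handle $z\notin\Delta$ via ``surface log terminal $=$ quotient''; (3) handle $z\in\Delta$ by either quoting the explicit classification of $\mathrm{lc}$ surface germs with reduced boundary and checking the finite cover case by case, or by a cyclic-cover reduction to the $\mathrm{dlt}$ (equivalently toroidal) case; (4) note $\mathrm{dlt}\Rightarrow\mathrm{dlc}$ in dimension two to get the final sentence.

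The main obstacle I anticipate is step (3): making the $z\in\Delta$ analysis clean without drowning in the case list. The subtlety is that $\mathrm{dlc}$ is weaker than $\mathrm{dlt}$ precisely at points where $\Delta$ is reducible and $Z$ is singular — the cone-over-a-conic example in the paper shows this is a genuine phenomenon — and one must check that even there the pair is finitely dominated by a \emph{log-smooth} pair, i.e.\ that the two boundary branches lift to a simple-normal-crossings pair of smooth curves upstairs. For the conic cone this is the explicit $2:1$ cover; in general it should follow from the classification of $\mathrm{lc}$ surface singularities together with the fact that the dual graph of the minimal log resolution of an $\mathrm{lc}$ surface pair with two boundary branches through the point forces the local analytic picture to be a quotient of a node by a finite group acting on the branches. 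I expect the cleanest argument is the cyclic-cover reduction, so that the genuinely new work is confined to showing such a cover exists and renders the pair $\mathrm{dlt}$.
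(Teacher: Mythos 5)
Your high-level plan for the case $z\in\Delta$ (option (b), the cyclic-cover reduction) is the same as the paper's, but your write-up contains a circularity and leaves out a step that the paper handles with care. You propose to reduce $(Z,\Delta)$ via a cyclic cover ``to a dlt\ldots situation\ldots and then use the last sentence of the lemma.'' The last sentence (the dlt statement) is a special case of the dlc statement you are trying to prove, not an independent input, so invoking it in the middle of the proof is circular; the alternative argument you mention for the dlt case (toroidal structure / quotient singularities compatible with the boundary) is essentially what is needed, but you should make it the load-bearing step rather than a parenthetical. Moreover, you never justify that the cyclic cover you want actually exists: to take the index-one cover of $\Delta$ you first need $\Delta$ to be $\bQ$-Cartier, and the paper earns this by observing that $(Z,(1-\varepsilon)\Delta)$ is numerically dlt, hence $Z$ is $\bQ$-factorial by \cite[4.11]{KM98}. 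Without that observation, the cover is not available.

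The paper's route is also slightly different from — and sharper than — ``reduce to dlt.'' After the index-one cover, $\wtilde\Delta$ is Cartier; the paper then uses this, plus $z\in\Delta$, to deduce that $(Z,\emptyset)$ is \emph{canonical} at $z$ (every resolution pulls back $\Delta$ with multiplicity $\geq 1$ on any exceptional $E$, so $a(E,Z,\emptyset)\geq a(E,Z,\Delta)+1\geq 0$). Thus $Z$ is a Du~Val quotient singularity by \cite[4.20--21]{KM98}, so one takes one more finite cover to make $Z$ smooth, and then applies \cite[4.15]{KM98} to the lc pair on a smooth surface. Your ``reduce to dlt'' version is not quite what the cover achieves, and your option (a) (explicit classification of lc surface germs) would work but is the more laborious path. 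The $z\notin\Delta$ case you handle exactly as the paper does (\cite[4.18]{KM98}).
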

\begin{proof}
  Let $z \in (Z,\Delta)_{\sing}$ be an arbitrary singular point. If $z \not
  \in \Delta$, then the statement follows from \cite[4.18]{KM98}. We can
  thus assume without loss of generality for the remainder of the
  proof that $z \in \Delta$.

  To continue, observe that for any rational number $0<\varepsilon<1$,
  the non-reduced pair $\left(Z,(1-\varepsilon) \Delta\right)$ is
  \emph{numerically dlt}; see \cite[4.1]{KM98} for the definition and
  use \cite[3.41]{KM98} for an explicit discrepancy computation. By
  \cite[4.11]{KM98}, $Z$ is then $\bQ$-factorial. Using
  $\bQ$-factoriality, we can then choose a sufficiently small Zariski
  neighborhood $U$ of $z$ and consider the index-one cover for
  $\Delta\cap U$. This gives a finite morphism of pairs $\gamma :
  (\wtilde U, \wtilde \Delta) \to (U,\Delta\cap U)$, where the
  morphism $\gamma$ is branched only over the singularities of $U$,
  and where $\wtilde \Delta= \gamma^*(\Delta\cap U)$ is
  Cartier---see~\cite[5.19]{KM98} for the construction.  Choose any
  point $\wtilde z \in \gamma^{-1}(z)$. Since discrepancies only
  increase under taking finite covers, \cite[5.20]{KM98}, the pair
  $(\wtilde U, \wtilde \Delta)$ will again be {dlc}. In particular, it
  suffices to prove the claim for a neighborhood of $\wtilde z$ in
  $(\wtilde U, \wtilde \Delta)$. We can thus assume without loss of
  generality that $z \in \Delta$ and that $\Delta$ is Cartier in our
  original setup.

  Next, we claim that $(Z, \emptyset)$ is canonical at $z$.  In fact,
  let $E$ be any divisor centered above $z$, as in \cite[2.24]{KM98}.
  Since $z \in \Delta$, and since $\Delta$ is Cartier, the pull-back
  of $\Delta$ to any resolution where $E$ appears will contain $E$
  with multiplicity at least $1$. In particular, we have the following
  inequality for the log discrepancies: $0 \leq a(E, Z, \Delta) +1
  \leq a(E, Z, \emptyset)$.  This shows that $(Z, \emptyset)$ is
  canonical at $z$ as claimed.

  By \cite[4.20-21]{KM98}, $Z$ has a Du~Val quotient singularity at
  $z$. Again replacing $Z$ by a finite cover of a suitable
  neighborhood of $z$, and replacing $z$ by its preimage in the
  covering space, we can henceforth assume without loss of generality
  that $Z$ is smooth. But then the claim follows from
  \cite[4.15]{KM98}.
\end{proof}

\subsection{The extension theorem for finitely dominated pairs}
\label{sec:ext-ext}

The following is the main result of the present section. It asserts
that any log form defined outside of a divisor $E$ can be extended to
the whole space if $E$ contracts to a singularity which is finitely
dominated by a smooth analytic pair. Theorem~\ref{thm:logformextension} holds
in arbitrary dimension.

\begin{thm}[Extension Theorem for finitely dominated
  singularities]\label{thm:logformextension}
  Let $(Z,\Delta)$ be a reduced pair in the sense of
  Definition~\ref{def:everythinglog}, and assume that $(Z,\Delta)$ is
  finitely dominated by smooth analytic pairs.  Let $\psi: (Y,\Gamma)
  \to (Z,\Delta)$ be a log-resolution, $E_\Gamma \subset Y$ the union
  of the $\psi$-exceptional divisors that are not contained in
  $\Gamma$, and $n \in \bN$. Then $\psi_*\Sym^n \Omega^1_Y(\log
  (\Gamma + E_\Gamma))$ is reflexive.
\end{thm}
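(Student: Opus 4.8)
The statement is local on $Z$, so I would work in an analytic neighborhood $U$ of an arbitrary point $z \in Z$, and the only interesting case is when $z \in (Z,\Delta)_{\sing}$. By hypothesis there is a finite surjective morphism of reduced pairs $\phi\colon (\wtilde U, \wtilde \Delta) \to (U, \Delta \cap U)$ with $(\wtilde U, \wtilde \Delta)$ log-smooth and $\phi$ unramified over $U \setminus \Delta$. Pulling the log-resolution $\psi$ back along $\phi$ and then taking a further log-resolution of the fiber product, I obtain a commutative diagram of pairs in which $\wtilde Y \to Y$ is generically finite and $\wtilde \psi\colon (\wtilde Y, \wtilde \Gamma) \to (\wtilde U, \wtilde\Delta)$ is a log-resolution. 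The goal is to reduce the reflexivity statement on $U$ to the analogous statement where the base is the \emph{smooth} pair $(\wtilde U, \wtilde \Delta)$, for which it is known.

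\textbf{Key steps, in order.} First, since reflexivity of $\psi_* \sF$ for $\sF := \Sym^n \Omega^1_Y(\log(\Gamma + E_\Gamma))$ is equivalent to the statement that every section over $U \setminus (Z,\Delta)_{\sing}$, i.e.\ every section of $\Sym^n \Omega^1_{U}(\log \Delta)$ over the smooth locus, that pulls back to a regular pluri-log form on $Y$ minus the exceptional fibers already extends across those fibers — I restate the claim as: a rational pluri-log form $\sigma$ on $(U,\Delta)$ which is a regular section of $\Sym^n \Omega^1_Y(\log(\Gamma + E_\Gamma))$ away from the $\psi$-exceptional set is in fact everywhere regular there. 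Second, I pull $\sigma$ back via the generically finite map to a rational pluri-log form $\wtilde \sigma$ on $(\wtilde U, \wtilde\Delta)$, using Fact~\ref{fact:pullback} and the remark following it; because the covering map is unramified outside $\Delta$, Corollary~\ref{cor:pb2} guarantees that ``$\sigma$ has poles as a pluri-log form if and only if $\wtilde\sigma$ does.'' Third — and this is the heart of the matter — I invoke the \emph{known} case of the theorem for the log-smooth pair $(\wtilde U,\wtilde\Delta)$: here $E_{\wtilde\Gamma}$ is the union of $\wtilde\psi$-exceptional divisors not in $\wtilde\Gamma$, and on a smooth ambient space the elementary $(-1)$-curve computation mentioned at the start of Section~\ref{sec:extendingForms} — more precisely its pluri-log generalization, which is the content of Theorem~\ref{thm:VZsheafextension2} referenced in the outline — shows $\wtilde\psi_* \Sym^n \Omega^1_{\wtilde Y}(\log(\wtilde\Gamma + E_{\wtilde\Gamma}))$ is reflexive. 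Hence $\wtilde\sigma$, being regular away from $\wtilde\psi$-exceptional divisors, extends, so it has no poles as a pluri-log form; by Step~2, neither does $\sigma$; so $\sigma$ is a regular section of $\sF$ and $\psi_*\sF$ is reflexive.

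\textbf{The main obstacle.} The delicate point is bookkeeping of the divisors under the two base changes: I must check that the exceptional divisor $E_\Gamma$ on $Y$ and its counterpart $E_{\wtilde\Gamma}$ on $\wtilde Y$ match up correctly, i.e.\ that a $\wtilde\psi$-exceptional divisor not lying in $\wtilde\Gamma$ maps either to a $\psi$-exceptional divisor not in $\Gamma$ or into the indeterminacy/exceptional locus, and conversely, so that ``regular away from the exceptional set'' is genuinely preserved in both directions. This requires care because the fiber product $Y \times_U \wtilde U$ is typically singular and one must resolve it, introducing new exceptional divisors; one has to verify these new divisors are swallowed into the ``allowed poles'' part $E$ on each side (they are exceptional over $\wtilde U$, and their images are exceptional over $U$). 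A second, more technical nuisance is that $\phi$ may ramify along components of $\wtilde\Delta$ lying over $\Delta$, so $d\phi$ is not an isomorphism there; but since we only ever test for poles \emph{along the log boundary}, where log-forms are by definition allowed first-order poles, this ramification is harmless — this is exactly why the argument is phrased in terms of ``poles as a pluri-log form'' and why Corollary~\ref{cor:pb2} is the right tool. Once the divisor bookkeeping is pinned down, the rest is formal.
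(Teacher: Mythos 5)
Your overall strategy --- localize analytically at a singular point, pass to the finite cover by a log-smooth pair, and reduce the extension problem to a statement over the smooth pair --- is exactly the approach taken in the paper. However, there are two genuine gaps in the way you carry it out, and one of them is a circular citation.

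\emph{The circular citation.} You justify the ``known'' log-smooth case by appealing to ``its pluri-log generalization, which is the content of Theorem~\ref{thm:VZsheafextension2}.'' But Theorem~\ref{thm:VZsheafextension2} is a \emph{consequence} of Theorem~\ref{thm:logformextension}: its proof invokes Corollary~\ref{cor:extension}, which is derived directly from the very theorem you are trying to prove. You cannot use it here. (The $(-1)$-curve computation mentioned at the start of Section~\ref{sec:extendingForms} also does not give what you want: it concerns ordinary holomorphic $p$-forms on a surface, not pluri-log forms in arbitrary dimension.) Fortunately, the log-smooth case you need is genuinely elementary --- it follows from the fact that the exceptional set of a proper birational morphism between normal varieties has image of codimension $\geq 2$, so that a section of the locally free sheaf $\Sym^n \Omega^1_{\wtilde U}(\log\wtilde\Delta)$ on the complement extends by Hartogs, and then pulls back via Fact~\ref{fact:pullback} --- but you should say this, not cite a downstream corollary.

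\emph{The unresolved bookkeeping and the wrong fiber product.} You take a log-resolution of $Y \times_U \wtilde U$, which makes $\wtilde Y \to Y$ only generically finite. This blocks a direct application of Corollary~\ref{cor:pb2}, which requires a \emph{finite} morphism of \emph{log-smooth} pairs; neither hypothesis is met on the nose in your setup (and $(U,\Delta)$ itself is not log-smooth). The paper sidesteps exactly this difficulty: it takes the \emph{normalization} of $Y\times_Z\wtilde Z$, so the map $\wtilde\gamma:\wtilde Y\to Y$ stays finite, and then restricts to an open set $Y^0\subset Y$ (removing the strict transform of the branch divisor and the image of the log-singular locus of $(\wtilde Y,\wtilde\Gamma)$). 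Over $Y^0$, Claim~\ref{sc:2} guarantees $\wtilde\gamma$ is \'etale away from $E$, so Corollary~\ref{cor:pb2} does apply; and Claim~\ref{sc:1} shows $Y\setminus Y^0$ meets $E$ only in codimension $\geq 2$, so that checking poles over $Y^0$ suffices because $\Sym^n\Omega^1_Y(\log(\Gamma+E_\Gamma))$ is locally free. This is precisely the ``bookkeeping'' you identify as the main obstacle but do not carry out; resolving the fiber product rather than merely normalizing it introduces further exceptional divisors and destroys finiteness, and without the codimension-two trick there is no clean way to control them. You would need to either switch to the normalization as the paper does, or supply an explicit argument that the new exceptional divisors on your $\wtilde Y$ are absorbed into $E_{\wtilde\Gamma}$ and that a suitable substitute for Corollary~\ref{cor:pb2} holds for generically finite maps on the relevant open locus.
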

\begin{subrem}\label{rem:formextension}
  Let $E \subset Y$ be the exceptional set of $\psi$. Then
  Theorem~\ref{thm:logformextension} is equivalent to the statement
  that for any open set $U \subset Z$ with preimage $V :=
  \psi^{-1}(U)$ and any form $\sigma \in H^0 \bigl(V \setminus E,\,
  \Sym^n \Omega^1_{V \setminus E}(\log \Gamma) \bigr)$ defined outside
  the $\psi$-exceptional set $E \cap V$, the form $\sigma$ extends to
  a form $\wtilde \sigma \in H^0 \bigl(V,\, \Sym^n \Omega^1_V (\log
  (\Gamma + E_\Gamma)) \bigr)$ on all of $V$. Hence the name ``extension
  theorem''.
\end{subrem}
\begin{subrem}
  For an example in the simple case where $\Delta = \emptyset$, let $Y$ be
  the total space of $\sO_{\bP^1}(-2)$, and let $E$ be the
  zero-section. It is not very difficult to write down a pluri-log
  form
  $$
  \sigma \in H^0\bigl( Y,\, \Sym^2 \Omega^1_Y(\log E)\bigr) \setminus
  H^0\bigl( Y,\, \Sym^2 \Omega^1_Y\bigr).
  $$
  Because $E$ contracts to a quotient singularity, this example shows
  that Theorem~\ref{thm:logformextension} holds only for
  log-differentials, and that the boundary given in
  Theorem~\ref{thm:logformextension} is the smallest possible.

  \PreprintAndPublication{In order to construct $\sigma$, consider the
    standard coordinate cover of $Y$ with open sets $U_{1,2} \simeq
    \mathbb A^2$, where $U_i$ carries coordinates $x_i, y_i$ and
    coordinate change is given as
    $$
    \phi_{1,2} : (x_1, y_1) \mapsto (x_2, y_2) = ( x_1^{-1},\,
    x_1^2y_1).
    $$
    In these coordinates the bundle map $U_i \to \P^1$ is given as
    $(x_i, y_i) \to x_i$, and the zero-section $E$ is given as $E \cap
    U_i = \{y_i = 0\}$. Now take
    $$
    \sigma_2 := y_2^{-1}(dy_2)^2 \in \bigl( \Sym^2(\Omega^1_Y(\log
    E))\bigr)(U_2)
    $$
    and observe that $\phi_{1,2}^*(\sigma)$ extends to a form in
    $\bigl( \Sym^2(\Omega^1_Y(\log E))\bigr)(U_1)$.}{}
\end{subrem}

\begin{proof}[Proof of Theorem~\ref{thm:logformextension}]
  Assume that we are given an open set $U$ and a form $\sigma$ as in
  Remark~\ref{rem:formextension}. Since the extension problem is local
  on $Z$ in the analytic topology, we can shrink $Z$ and assume
  without loss of generality that there exists a finite, surjective
  morphism $\gamma : (\wtilde Z, \wtilde \Delta) \to (Z,\Delta)$ from a
  smooth pair $(\wtilde Z, \wtilde \Delta)$.
  
  Let $\wtilde Y$ be the normalization of $Y \times_{\wtilde Z} Z$ and
  $\wtilde \Gamma \subset \wtilde Y$ the reduced preimage of $\Gamma$.
  Then we obtain a commutative diagram of surjective morphisms of
  pairs as follows,
  $$
  \xymatrix{ (\wtilde Y, \wtilde \Gamma) \ar[rrr]^{\txt{\scriptsize
        $\wtilde \gamma$, finite}} \ar[d]_{\txt{\scriptsize $\wtilde
        \psi$\\\scriptsize contracts $\wtilde E$}} &&& (Y,\Gamma)
    \ar[d]^{\txt{\scriptsize$\psi$ \\\scriptsize
        log-resolution,\\\scriptsize contracts  $E$}} \\
    (\wtilde Z, \wtilde \Delta) \ar[rrr]_{\txt{\scriptsize $\gamma$,
        finite}} &&& (Z,\Delta) }
  $$
  where $\wtilde E := (\wtilde\gamma^{-1}(E))_{\red} =
  \bigl(\bigl(\gamma \circ \wtilde \psi\bigr)^{-1}
  (Z,\Delta)_{\sing}\bigr)_{\red}$ is the exceptional set of the
  morphism $\wtilde \psi$. Let $B \subset Z$ be the branch
  \emph{divisor} of $\gamma$, i.e., the minimal codimension-1 set such
  that $\gamma\resto {\wtilde Z \setminus \gamma^{-1}(B)}$ is étale in
  codimension one. Let $\psi^{-1}_*(B) \subset Y$ be its strict
  transform. Finally, set
  $$
  Y^0 := Y\setminus \bigl(\, \psi^{-1}_*(B) \, \cup\, \wtilde \gamma
  ((\wtilde Y, \wtilde \Gamma)_{\sing})\, \bigr).
  $$
  The set $Y^0$ is then the maximal open subset of $Y \setminus
  \psi^{-1}_*(B)$ such that $\wtilde Y^0 := \wtilde \gamma^{-1}(Y^0)$
  is contained in the log-smooth locus of $(\wtilde Y, \wtilde
  \Gamma)$.  We will use two of its main properties explicitly. These
  are contained in the following Claims.

  \begin{subclaim}\label{sc:1}
    The complement $Y\setminus Y^0$ intersects the $\psi$-exceptional
    set $E$ only in a set of codimension $\codim_Y(E \setminus Y^0)
    \geq 2$.
  \end{subclaim}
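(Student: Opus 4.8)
The claim is a statement about the codimension, inside the exceptional locus $E$, of the extra bad set $\psi^{-1}_*(B) \cup \wtilde\gamma((\wtilde Y,\wtilde\Gamma)_{\sing})$ that we removed to form $Y^0$. The plan is to bound each of the two pieces separately. First I would handle the strict transform $\psi^{-1}_*(B)$ of the branch divisor $B \subset Z$. Since $\psi$ is a birational morphism of surfaces contracting $E$ to the isolated points $(Z,\Delta)_{\sing}$, and $B$ is a \emph{divisor} on $Z$ (possibly reducible), its strict transform $\psi^{-1}_*(B)$ is a curve that maps finitely onto $B$; hence no component of $\psi^{-1}_*(B)$ can be contained in the $\psi$-contracted set $E$. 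Therefore $E \cap \psi^{-1}_*(B)$ is a proper closed subset of the curve $E$, i.e., a finite set, which has codimension $\ge 2$ in the surface $Y$. (If the ambient dimension is higher than $2$ the same argument works: $\psi^{-1}_*(B)$ dominates the divisor $B$, so it is not contained in the $\psi$-exceptional set, and its intersection with $E$ is nowhere dense in $E$; one may need to remark that $E$ is of pure codimension one, which holds because $\psi$ is a log resolution, in particular projective birational, so its exceptional locus is a divisor where it is not an isomorphism — and by shrinking $Z$ we have arranged $(Z,\Delta)_{\sing}$ to be where all the action is.)

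Second I would bound $E \cap \wtilde\gamma\bigl((\wtilde Y,\wtilde\Gamma)_{\sing}\bigr)$. The morphism $\gamma: \wtilde Z \to Z$ is finite and, by construction of $Y^0$ removing $\psi^{-1}_*(B)$, étale in codimension one away from $B$; and $\wtilde Y$ is the \emph{normalization} of the fiber product $Y \times_Z \wtilde Z$. Over the locus $Y \setminus \psi^{-1}_*(B)$ the map $\wtilde\gamma$ is finite and étale in codimension one, and the preimage of the log-smooth locus of $(Y,\Gamma)$ is contained in the log-smooth locus of $(\wtilde Y,\wtilde\Gamma)$ — this is exactly Fact~\ref{fact:pullback} applied to the covering, together with the fact that a finite cover étale in codimension one of a smooth variety, with reduced branch behaviour captured in $\Gamma$, produces at worst singularities in codimension $\ge 2$. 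Consequently $(\wtilde Y,\wtilde\Gamma)_{\sing}$, intersected with $\wtilde\gamma^{-1}(Y\setminus\psi^{-1}_*(B))$, has codimension $\ge 2$ in $\wtilde Y$; pushing forward by the finite map $\wtilde\gamma$ preserves codimension, so $\wtilde\gamma\bigl((\wtilde Y,\wtilde\Gamma)_{\sing}\bigr) \setminus \psi^{-1}_*(B)$ has codimension $\ge 2$ in $Y$. A fortiori its intersection with $E$ has codimension $\ge 2$ in $Y$ — in fact one can even say it has codimension $\ge 2$ \emph{inside $E$}, since the generic point of $E$ lies over a singular point of $Z$ but the generic behaviour of the cover along $E$ is still étale in codimension one there, so $(\wtilde Y, \wtilde\Gamma)$ is log-smooth at the generic point of each component of $\wtilde E$.

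Combining the two bounds: $Y \setminus Y^0 = \psi^{-1}_*(B) \cup \wtilde\gamma\bigl((\wtilde Y,\wtilde\Gamma)_{\sing}\bigr)$, and we have shown $E \cap \psi^{-1}_*(B)$ is finite (codimension $\ge 2$ in $Y$) and $E \cap \bigl(\wtilde\gamma((\wtilde Y,\wtilde\Gamma)_{\sing}) \setminus \psi^{-1}_*(B)\bigr)$ has codimension $\ge 2$ in $Y$; hence $E \cap (Y \setminus Y^0) = E \setminus Y^0$ has codimension $\ge 2$ in $Y$, which is the assertion $\codim_Y(E \setminus Y^0) \ge 2$.

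The main obstacle I expect is the second step, specifically making precise the claim that the normalization $\wtilde Y$ of $Y \times_Z \wtilde Z$ is log-smooth at the generic point of every component of $\wtilde E = (\wtilde\gamma^{-1}(E))_{\red}$, so that the singular locus $(\wtilde Y,\wtilde\Gamma)_{\sing}$ genuinely meets $\wtilde E$ only in codimension $\ge 2$ rather than possibly containing a whole component. This requires controlling the ramification of $\gamma$ along the exceptional set: one uses that $\gamma$ is étale over $Z \setminus (\Delta \cup B \cup (Z,\Delta)_{\sing})$ and that $E$ maps to the isolated points $(Z,\Delta)_{\sing}$, so in a punctured analytic neighbourhood the cover is determined by a representation of the local fundamental group, and purity of the branch locus (Zariski–Nagata) forces $\wtilde Y \to Y$ to be étale in codimension one away from $\Gamma \cup \psi^{-1}_*(B)$; normality of $\wtilde Y$ then gives smoothness in codimension one, and the simple-normal-crossing structure of $\wtilde\Gamma$ at the generic points of its components follows from the analogous structure of $\Gamma$ pulled back through a cover étale in codimension one. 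All the inputs are standard, but the bookkeeping of which divisors are allowed to ramify is the delicate point.
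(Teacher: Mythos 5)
Your treatment of the first piece, $\psi^{-1}_*(B) \cap E$, is exactly what the paper does: a strict transform shares no component with the exceptional set, so the intersection has codimension $\geq 2$. For the second piece, however, you have taken a detour that is far harder than necessary and that imports machinery the paper does not use. The paper's argument is a one-liner: $\wtilde Y$ is normal by construction (it is defined as a normalization), and $\wtilde\Gamma$ is a reduced Weil divisor, so the log-singular locus $(\wtilde Y,\wtilde\Gamma)_{\sing}$ automatically has codimension $\geq 2$ in $\wtilde Y$ --- the generic point of each component of $\wtilde\Gamma$ lies in the smooth locus of $\wtilde Y$ and is a generically smooth point of the divisor, and distinct components meet in codimension $\geq 2$. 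Since $\wtilde\gamma$ is finite, the image $\wtilde\gamma\bigl((\wtilde Y,\wtilde\Gamma)_{\sing}\bigr)$ has codimension $\geq 2$ in $Y$, and intersecting with $E$ can only raise the codimension. Done.

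The place where you went astray is in reading the claim as requiring that $E \setminus Y^0$ have codimension $\geq 2$ \emph{inside} $E$, i.e.\ that the generic point of no component of $\wtilde E$ lies in $(\wtilde Y,\wtilde\Gamma)_{\sing}$. That stronger assertion is neither stated nor needed; the claim is only $\codim_Y(E\setminus Y^0) \geq 2$, with codimension measured in $Y$. Because of this misreading you were driven to control the ramification of $\wtilde\gamma$ along $\wtilde E$ via purity of the branch locus and local fundamental groups --- a genuinely delicate enterprise, as you yourself flag --- when in fact nothing whatsoever about the behaviour of the cover along $E$ enters the paper's proof of this claim. Your restriction to $\wtilde\gamma^{-1}(Y\setminus\psi^{-1}_*(B))$ before pushing forward is likewise unnecessary for the same reason. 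So: the conclusion you reach is correct, and your invocation of normality and finiteness does contain the key facts, but the argument as written requires inputs (Zariski--Nagata purity, analytic local fundamental groups, log-smoothness at generic points of $\wtilde E$) that are not needed and that make the proof fragile where it should be trivial.
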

  \begin{proof}
    We need to show that
    \begin{align*}
      2 & \leq \codim_Y \big( \bigl(\, \psi^{-1}_*(B) \, \cup\,
      \wtilde \gamma
      ((\wtilde Y, \wtilde \Gamma)_{\sing})\, \bigr)\cap E \bigr) \\
      & = \min \{ \codim_Y (\psi^{-1}_*(B)\cap E), \, \codim_Y (\gamma
      ((\wtilde Y, \wtilde \Gamma)_{\sing})\,\cap E) \}.
    \end{align*}
    Since $\wtilde Y$ is normal, the log-singular locus $(\wtilde Y,
    \wtilde \Gamma)_{\sing}$ has codimension at least $2$. Since $\wtilde
    \gamma$ is finite, this gives $\codim_Y \wtilde \gamma
    \bigl((\wtilde Y, \wtilde \Gamma)_{\sing}\bigr) \geq 2$. It is also
    clear that $\psi^{-1}_*(B)$ and $E$ have no common component, so
    $\codim_Y \psi^{-1}_*(B)\cap E \geq 2$.
  \end{proof}
  
  \begin{subclaim}\label{sc:2}
    The morphism $\wtilde \gamma\resto {\wtilde Y^0}$ is étale outside of $E
    \cap Y^0$.
  \end{subclaim}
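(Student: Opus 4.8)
The plan is to check \'etaleness of $\widetilde{\gamma}$ one point at a time and to transport the problem over $Y\setminus E$ back to the finite map $\gamma$ on the smooth pair $(\widetilde{Z},\widetilde{\Delta})$, where it becomes elementary. The first step uses that $\psi$, being a birational morphism onto the normal variety $Z$, restricts to an isomorphism $Y\setminus E\cong Z\setminus\psi(E)$ with $\psi^{-1}(\psi(E))=E$; moreover, since $Y$ is smooth while $Z$ is only normal, $\psi^{-1}(Z_{\sing})\subseteq E$, so $Z_{\sing}\subseteq\psi(E)$ and hence $Z\setminus\psi(E)\subseteq Z_{\reg}$ is smooth. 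Restricting the fibre product over $Z\setminus\psi(E)$, the scheme $Y\times_Z\widetilde{Z}$ becomes $\gamma^{-1}\bigl(Z\setminus\psi(E)\bigr)$, an open subset of the smooth variety $\widetilde{Z}$ and therefore already normal; consequently $\widetilde{Y}\setminus\widetilde{E}=\widetilde{\gamma}^{-1}(Y\setminus E)$ is identified with $\widetilde{Z}\setminus\gamma^{-1}\psi(E)$, and under this identification $\widetilde{\gamma}|_{\widetilde{Y}\setminus\widetilde{E}}$ is simply $\gamma|_{\widetilde{Z}\setminus\gamma^{-1}\psi(E)}$. In passing this also exhibits $(\widetilde{Y},\widetilde{\Gamma})$ as log-smooth over $\widetilde{Y}\setminus\widetilde{E}$ (since $\widetilde{\Delta}$ is a simple normal crossings divisor on the smooth $\widetilde{Z}$), so that $(\widetilde{Y},\widetilde{\Gamma})_{\sing}\subseteq\widetilde{E}$ and the summand $\widetilde{\gamma}\bigl((\widetilde{Y},\widetilde{\Gamma})_{\sing}\bigr)$ entering the definition of $Y^0$ removes nothing outside $E$.

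The second step reads off $Y^0\setminus E$. Combining $(\widetilde{Y},\widetilde{\Gamma})_{\sing}\subseteq\widetilde{E}$ with the definition of $Y^0$ gives $Y^0\setminus E=(Y\setminus E)\setminus\psi^{-1}_*(B)$, and since over $Z\setminus\psi(E)$ the strict transform $\psi^{-1}_*(B)$ coincides with the full preimage of $B$, the isomorphism $Y\setminus E\cong Z\setminus\psi(E)$ carries $Y^0\setminus E$ onto $(Z\setminus\psi(E))\setminus B$, an open subset of $Z_{\reg}$ disjoint from the branch divisor $B$ of $\gamma$. Hence it suffices to prove that $\gamma$ is \'etale over every open $W'\subseteq Z_{\reg}$ with $W'\cap B=\emptyset$.

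The third step is where the content lies, and the main (if mild) obstacle. Over such a $W'$ the map $\gamma$ restricts to a finite, dominant morphism $\gamma^{-1}(W')\to W'$ of smooth varieties --- the source being open in the smooth $\widetilde{Z}$ --- which, by the very definition of $B$ as the branch \emph{divisor}, is \'etale in codimension one. By the Zariski--Nagata purity of the branch locus, a finite dominant morphism of smooth varieties that is \'etale in codimension one is already \'etale. The delicate point is precisely that this last implication breaks down without the passage to $Z_{\reg}$ --- a quasi-\'etale cover of a normal variety by a smooth one need not be \'etale, as the index-one covers used above show --- so the inclusion $Z_{\sing}\subseteq\psi(E)$ established in the first step is essential. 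It follows that $\gamma$ is \'etale over $W'$, hence over all of $(Z\setminus\psi(E))\setminus B$, and therefore $\widetilde{\gamma}|_{\widetilde{Y}^0}$ is \'etale at every point lying over $Y^0\setminus E$, which is the assertion.
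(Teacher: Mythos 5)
Your proof is correct, and it takes essentially the same route the paper does: identify $\wtilde\gamma$ over $Y\setminus E$ with $\gamma$ over $Z\setminus\psi(E)$, note one is outside the branch divisor $B$ there, and conclude \'etaleness. The paper's own justification is the single sentence ``By construction, $\wtilde\gamma$ is \'etale outside of $E\cup\psi^{-1}_*(B)$,'' which --- since $B$ is defined only as the \emph{codimension-one} branch locus --- tacitly relies on exactly the two points you make explicit, namely that $Z_{\sing}\subseteq\psi(E)$ and Zariski--Nagata purity over the resulting regular base; you have filled in the implicit step cleanly.
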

  \begin{proof}
    By construction, $\wtilde \gamma$ is étale outside of $E \cup
    \psi^{-1}_*(B)$.
  \end{proof}

  To prove Theorem~\ref{thm:logformextension}, we need to show that
  $\sigma$ extends to all of $Y$ as a pluri-log form, i.e., that the
  associated section
  $$
  \bar \sigma \in H^0 \bigl( Y,\, \bigl(\Sym^n \Omega^1_{Y}(\log
  (\Gamma + E_\Gamma)) \bigr)(*E) \big)
  $$
  has no poles along $E$ as a pluri-log form. Since $\bar \sigma$
  certainly has no poles outside of $E$, and since $\Sym^n
  \Omega^1_Y(\log (\Gamma + E_\Gamma))$ is locally free, Claim~\ref{sc:1}
  implies that it suffices to show that the restriction $\bar
  \sigma\resto {Y^0}$ has no poles along $E \cap Y^0$ as a pluri-log form.
  In particular, $\bar \sigma \in H^0 \bigl( Y,\, \Sym^n
  \Omega^1_{Y}(\log (\Gamma + E_\Gamma)) \bigr)$.
  
  By Corollary~\ref{cor:pb2} and Claim~\ref{sc:2}, it suffices to show
  that the pull-back $\wtilde \gamma^*(\bar \sigma)\resto {\wtilde
    Y^0}$ does not have any poles along $\wtilde Y^0 \cap \wtilde E$
  as a pluri-log form.  For that, recall that $\psi$ is an isomorphism
  over $(Z,\Delta)_{\reg}$. Hence the form $\sigma$ gives rise to a
  form
  $$
  \tau \in H^0 \bigl( Z,\, \Sym^{[n]} \Omega^1_Z(\log \Delta) \bigr).
  $$
  Since $(\wtilde Z, \wtilde \Delta)$ is log-smooth,
  Fact~\ref{fact:pullback} asserts that the pull-back of $\tau$
  extends to a pluri-log form $\wtilde \tau \in H^0 \bigl(\wtilde Z,\,
  \Sym^n \Omega^1_{\wtilde Z}(\log \wtilde \Delta) \bigr)$ on all of
  $\wtilde Z$. The pull-back to $\wtilde Y^0$,
  \begin{equation}\label{eq:htau}
    \hat \tau := {\wtilde\psi}^*(\wtilde \tau) \in
    H^0\bigl( \wtilde Y^0, \, \Sym^n \Omega^1_{\wtilde Y}(\log
    \wtilde \Gamma)\resto {\wtilde Y^0} \bigr),
  \end{equation}
  is then a pluri-log form on $\wtilde Y^0$ without poles that agrees
  with $\wtilde \gamma^*(\sigma)$ outside $\wtilde E$. This form
  necessarily equals $(\wtilde \gamma\resto {\wtilde Y^0})^*(\bar
  \sigma)$, which then does not have any poles along $\wtilde Y^0 \cap
  \wtilde E$, as asserted. Theorem~\ref{thm:logformextension} follows.
\end{proof}

\begin{cor}\label{cor:extension}
  Under the conditions of Theorem~\ref{thm:logformextension} we obtain
  an embedding
  $$
  \psi^{[*]} \Sym^{[n]} \Omega^1_Z(\log \Delta) \into \Sym^{n}
  \Omega^1_Y(\log (\Gamma + E_\Gamma)).
  $$
\end{cor}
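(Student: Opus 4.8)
The plan is to deduce Corollary~\ref{cor:extension} directly from Theorem~\ref{thm:logformextension} together with Remark~\ref{rem:formextension}, by identifying sections of the pushforward sheaf with log pluri-forms on the resolution. First I would recall that $\psi$ is an isomorphism over the log-smooth locus $(Z,\Delta)_{\reg}$, whose complement has codimension at least two since $Z$ is normal. Hence a reflexive rank-$r$ sheaf on $Z$, such as $\Sym^{[n]}\Omega^1_Z(\log\Delta)$, is determined by its restriction to $(Z,\Delta)_{\reg}$, and likewise its sections are determined there. So for any open $U\subseteq Z$ with preimage $V=\psi^{-1}(U)$, a section of $\Sym^{[n]}\Omega^1_Z(\log\Delta)$ over $U$ restricts to an honest pluri-log form on the smooth part, which pulls back via $\psi$ (using Fact~\ref{fact:pullback}, applied on the log-smooth loci) to a section $\sigma\in H^0\bigl(V\setminus E,\,\Sym^n\Omega^1_{V\setminus E}(\log\Gamma)\bigr)$, where $E$ is the $\psi$-exceptional set. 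This is precisely the input required by Remark~\ref{rem:formextension}.

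Next I would invoke Theorem~\ref{thm:logformextension} in the equivalent form of Remark~\ref{rem:formextension}: the form $\sigma$ extends to $\wtilde\sigma\in H^0\bigl(V,\,\Sym^n\Omega^1_V(\log(\Gamma+E_\Gamma))\bigr)$. Performing this for all open sets $U$ and checking compatibility with restriction maps (which is automatic, since the extension is unique — it is determined on the dense open set $V\setminus E$, and $\Sym^n\Omega^1_Y(\log(\Gamma+E_\Gamma))$ is locally free hence torsion-free) yields a sheaf morphism. Concretely, on the smooth locus of $Z$ we have the tautological injection $\psi^*\Sym^n\Omega^1_Z(\log\Delta)\hookrightarrow\Sym^n\Omega^1_Y(\log\Gamma)\subseteq\Sym^n\Omega^1_Y(\log(\Gamma+E_\Gamma))$ coming from $d\psi$; the content of the theorem is that, after taking the reflexive pullback $\psi^{[*]}\Sym^{[n]}\Omega^1_Z(\log\Delta)$, this map extends over all of $Y$.

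The cleanest way to phrase this is via the projection formula and reflexivity. Since $\psi_*\Sym^n\Omega^1_Y(\log(\Gamma+E_\Gamma))$ is reflexive by Theorem~\ref{thm:logformextension}, and since $\Sym^{[n]}\Omega^1_Z(\log\Delta)$ agrees with $\psi_*\Sym^n\Omega^1_Y(\log\Gamma)$ on $(Z,\Delta)_{\reg}$ — indeed with the possibly larger $\psi_*\Sym^n\Omega^1_Y(\log(\Gamma+E_\Gamma))$ there, as $\Gamma$ and $E_\Gamma$ differ only over the singular locus — and since both are reflexive sheaves that agree in codimension one, we get $\Sym^{[n]}\Omega^1_Z(\log\Delta)\cong\psi_*\Sym^n\Omega^1_Y(\log(\Gamma+E_\Gamma))$. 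Adjunction for $\psi_*$ and $\psi^*$ then gives a morphism $\psi^*\Sym^{[n]}\Omega^1_Z(\log\Delta)\to\Sym^n\Omega^1_Y(\log(\Gamma+E_\Gamma))$; taking double duals on the source (the target is already reflexive, being locally free) produces the claimed $\psi^{[*]}\Sym^{[n]}\Omega^1_Z(\log\Delta)\to\Sym^n\Omega^1_Y(\log(\Gamma+E_\Gamma))$.

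It remains to check that this morphism is injective. Here I would argue that injectivity is a condition that can be tested on a dense open set: both sheaves are torsion-free (the target locally free, the source the reflexive hull of a torsion-free sheaf), so the kernel, if nonzero, would be a torsion-free sheaf of positive rank, hence nonzero on the generic point; but over the open set where $\psi$ is an isomorphism onto $(Z,\Delta)_{\reg}$ the map is the tautological isomorphism $\psi^*\Omega$-symmetric-power to itself composed with an inclusion of locally free sheaves, visibly injective. Therefore the kernel is zero everywhere. I expect the only mildly delicate point to be the bookkeeping identifying $\psi_*\Sym^n\Omega^1_Y(\log(\Gamma+E_\Gamma))$ with $\Sym^{[n]}\Omega^1_Z(\log\Delta)$ in codimension one — one must observe that $E_\Gamma$, the non-boundary exceptional divisors, and $\Gamma$ itself, restrict to the correct thing over $(Z,\Delta)_{\reg}$, where there are no exceptional divisors at all — but this is straightforward given that $\psi$ is an isomorphism there. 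Everything else is formal manipulation of reflexive sheaves and the statement of Theorem~\ref{thm:logformextension}.
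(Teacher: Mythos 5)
Your argument is correct and follows essentially the same route as the paper's proof: identify $\Sym^{[n]}\Omega^1_Z(\log\Delta)$ with $\psi_*\Sym^n\Omega^1_Y(\log(\Gamma+E_\Gamma))$ using the reflexivity supplied by Theorem~\ref{thm:logformextension} together with agreement over the locus where $\psi$ is an isomorphism, apply the co-unit $\psi^*\psi_*\to\mathrm{id}$ to obtain the morphism, pass to double duals, and conclude injectivity because the kernel would be torsion inside a torsion-free sheaf. The only cosmetic differences are the preliminary paragraph rephrasing things via Remark~\ref{rem:formextension} and the passing mention of ``projection formula'' where what is actually used is adjunction.
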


\begin{proof}
  As $\psi$ induces an isomorphism $Y\setminus E\simeq Z\setminus
  \psi(E)$, Theorem~\ref{thm:logformextension} implies that
  $$
  \Sym^{[m]} \Omega^1_Z(\log \Delta)\simeq \psi_*\Sym^{m}
  \Omega^1_Y(\log (\Gamma + E_\Gamma))
  $$
  and hence we obtain that there exists a morphism
  $$
  \psi^*\Sym^{[m]} \Omega^1_Z(\log \Delta)\simeq \psi^*\psi_* \Sym^{m}
  \Omega^1_Y(\log (\Gamma + E_\Gamma)) \to \Sym^{m} \Omega^1_Y(\log
  (\Gamma + E_\Gamma)),
  $$
  which is an isomorphism, in particular an embedding, on $Y\setminus
  E$. This remains true after taking the double dual of these sheaves.
  Therefore the kernel of the map $\psi^{[*]} \Sym^{[m]}
  \Omega^1_Z(\log \Delta) \to \Sym^{m} \Omega^1_Y(\log (\Gamma +
  E_\Gamma))$ is a torsion sheaf and the fact that $\psi^{[*]}
  \Sym^{[m]} \Omega^1_Z(\log \Delta)$ is torsion-free implies the
  statement.
\end{proof}

\subsection{Extensions of Viehweg-Zuo sheaves}

We believe that the conclusion of Theorem~\ref{thm:logformextension}
holds for a larger class of singularities than those that we need to
discuss here. Thus it makes sense to introduce the following notation.

\begin{defn}\label{def:extthm}
  Let $(Z,\Delta)$ be a reduced pair in the sense of
  Definition~\ref{def:everythinglog}. Then we will say that \emph{the
    extension theorem holds for $(Z,\Delta)$} if for any
  log-resolution $\psi: (Y,\Gamma) \to (Z,\Delta)$, the sheaf
  $\psi_*\Sym^n \Omega^1_Y(\log (\Gamma + E_\Gamma))$ is reflexive,
  where $E_\Gamma$ denotes the union of the $\psi$-exceptional
  divisors that are not contained in $\Gamma$, and $n \in \bN$ is
  arbitrary.
\end{defn}

\begin{example}\label{ex:dltext}
  Example~\ref{ex:dltsurfisfd} and Theorem~\ref{thm:logformextension}
  imply that the extension theorem holds for {dlc} surface pairs.
\end{example} 

We will later consider log-smooth reduced pairs $(Z,\Delta)$ and
morphisms $f: Y \to Z$ whose restriction to $Z\setminus \Delta$ is a
smooth family of canonically polarized varieties. If $f$ has positive
variation, $\Var(f) > 0$, then Viehweg and Zuo have shown in
\cite[Thm.~1.4]{VZ02} that there exists a positive number $n$ and an
invertible subsheaf $\sA \subset \Sym^n \Omega^1_Z(\log \Delta)$ of
Kodaira-Iitaka dimension $\kappa(\sA) \geq \Var(f)$. We call this a
\emph{Viehweg-Zuo sheaf} on $(Z,\Delta)$.  More generally and more
precisely, we use the following definition.

\begin{defn}\label{def:VZ}
  Let $(Z,\Delta)$ be a reduced pair. A reflexive sheaf $\sA$ of rank
  $1$ is called a \emph{Viehweg-Zuo sheaf} if for some $n \in \mathbb
  N$ there exists an embedding $\sA \subset \Sym^{[n]}
  \Omega^1_Z(\log \Delta)$.
\end{defn}

The extension theorem will be used later to pull-back Viehweg-Zuo
sheaves to log resolutions. The following Theorem shows how this is
done.

\begin{thm}[Extension of Viehweg-Zuo sheaves]\label{thm:VZsheafextension2}
  Let $(Z,\Delta)$ be a reduced pair for which the extension theorem
  holds.  Using the setup of Definition~\ref{def:extthm}, assume that
  there exists a Viehweg-Zuo sheaf $\sA$ with inclusion $\iota : \sA
  \to \Sym^{[n]} \Omega^1_Z(\log \Delta)$. Then there exists an
  invertible Viehweg-Zuo sheaf $\sC \subset \Sym^n \Omega^1_Y(\log
  (\Gamma+E_\Gamma))$ with the following property: Let $m\in\bN$ and
  $$
  \iota^{[m]}: \sA^{[m]} \to \Sym^{[m\cdot n]} \Omega^1_Z(\log \Delta)
  $$
  the associated morphism of reflexive powers. Then $\iota^{[m]}$
  pulls back to give a sheaf morphism that factors through
  $\sC^{\otimes m}$,
  $$
  \bar \iota^{[m]} : \psi^{[*]}\sA^{[m]} \into \sC^{\otimes m}
  \subset \Sym^{m\cdot n} \Omega^1_Y(\log (\Gamma + E_\Gamma)).
  $$
\end{thm}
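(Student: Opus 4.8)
The plan is to reduce everything to Corollary~\ref{cor:extension}, and then to chase the resulting inclusions through the double-dual functor. First I would apply Corollary~\ref{cor:extension} (which is available because the extension theorem holds for $(Z,\Delta)$) to obtain the embedding $\psi^{[*]}\Sym^{[n]}\Omega^1_Z(\log\Delta)\into \Sym^n\Omega^1_Y(\log(\Gamma+E_\Gamma))$, and more generally, after replacing $n$ by $m\cdot n$, the embedding $\psi^{[*]}\Sym^{[m\cdot n]}\Omega^1_Z(\log\Delta)\into \Sym^{m\cdot n}\Omega^1_Y(\log(\Gamma+E_\Gamma))$ for every $m$. Precomposing $\psi^{[*]}$ applied to $\iota^{[m]}: \sA^{[m]}\to\Sym^{[m\cdot n]}\Omega^1_Z(\log\Delta)$ with this embedding already produces a sheaf morphism $\psi^{[*]}\sA^{[m]}\to\Sym^{m\cdot n}\Omega^1_Y(\log(\Gamma+E_\Gamma))$; since $\psi^{[*]}\sA^{[m]}$ is reflexive of rank one, hence torsion-free, and $\psi^{[*]}\iota^{[m]}$ is injective away from the $\psi$-exceptional locus (where $\psi$ is an isomorphism and $\iota$ is an embedding), this composite morphism is itself injective. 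This gives the desired $\bar\iota^{[m]}$; what remains is to produce the single sheaf $\sC$ through which all of these factor compatibly.

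Next I would construct $\sC$. Apply the construction for $m=1$: $\bar\iota := \bar\iota^{[1]}$ is an embedding $\psi^{[*]}\sA\into\Sym^n\Omega^1_Y(\log(\Gamma+E_\Gamma))$. The sheaf $\psi^{[*]}\sA = (\psi^*\sA)^{**}$ is reflexive of rank one but need not be invertible; however $Y$ is smooth, so I can take the reflexive sheaf $\sC := (\psi^{[*]}\sA)^{**}=\psi^{[*]}\sA$ and note that on the smooth variety $Y$ a rank-one reflexive sheaf is automatically invertible (reflexive sheaves of rank one on a smooth, or more generally regular in codimension two and $S_2$, variety are divisorial and hence locally free in codimension one—and on a smooth surface, or after restricting to the smooth locus which is all of $Y$, line bundle). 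So $\sC$ is an invertible subsheaf of $\Sym^n\Omega^1_Y(\log(\Gamma+E_\Gamma))$, i.e.\ a Viehweg-Zuo sheaf on $(Y,\Gamma+E_\Gamma)$.

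Then I would check the factorization through $\sC^{\otimes m}$. The natural multiplication maps $\sA^{\otimes m}\to\sA^{[m]}$ and the symmetric multiplication $(\Sym^n\Omega^1_Y(\log(\Gamma+E_\Gamma)))^{\otimes m}\to\Sym^{m\cdot n}\Omega^1_Y(\log(\Gamma+E_\Gamma))$ are compatible with $\iota$ and $\bar\iota$ in the evident way; pulling $\iota^{\otimes m}$ back and taking double duals, one gets a commutative square relating $\psi^{[*]}\sA^{[m]}$, $\sC^{\otimes m}$, and $\Sym^{m\cdot n}\Omega^1_Y(\log(\Gamma+E_\Gamma))$. Concretely: $\bar\iota^{\otimes m}$ maps $\psi^{[*]}(\sA^{\otimes m})$ into $\sC^{\otimes m}$; this factors through $\psi^{[*]}\sA^{[m]}$ because $\sC^{\otimes m}$ is locally free hence reflexive (tensor powers of a line bundle are line bundles), so the morphism from the torsion-free $\psi^*(\sA^{\otimes m})$ extends uniquely over its double dual $\psi^{[*]}\sA^{[m]}$; and the composite with $\sC^{\otimes m}\subset\Sym^{m\cdot n}\Omega^1_Y(\log(\Gamma+E_\Gamma))$ agrees with $\bar\iota^{[m]}$ away from the exceptional locus, hence everywhere, since both land in a locally free sheaf and agree on a big open set.

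I expect the main obstacle to be purely bookkeeping: checking that the various double-dual extensions are compatible, i.e.\ that $\bar\iota^{[m]}$ as built directly from Corollary~\ref{cor:extension} really does coincide with the map obtained by multiplying $\bar\iota := \bar\iota^{[1]}$ and reflexivizing. This is a standard ``two morphisms of sheaves into a locally free sheaf that agree on the complement of a codimension-$\geq 2$ set must agree everywhere'' argument, combined with the fact that $\Hom(-,\sF)$ sends the natural surjection $\psi^*(\sA^{\otimes m})\onto \psi^{[*]}\sA^{[m]}$ (an isomorphism in codimension one) to an isomorphism whenever $\sF$ is reflexive; since $\sC^{\otimes m}$ and $\Sym^{m\cdot n}\Omega^1_Y(\log(\Gamma+E_\Gamma))$ are both locally free, every relevant $\Hom$ group is controlled and the factorization and uniqueness follow. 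No serious geometric input beyond Corollary~\ref{cor:extension} is needed.
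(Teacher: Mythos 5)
Your proof is correct and close in spirit to the paper's, but you construct $\sC$ differently: you set $\sC := \psi^{[*]}\sA$ and argue that, $Y$ being smooth, a rank-one reflexive sheaf is automatically invertible; the paper instead takes $\sC$ to be the \emph{saturation} of $\psi^{[*]}\sA$ inside $\Sym^n\Omega^1_Y(\log(\Gamma+E_\Gamma))$ and invokes \cite[Lem.~1.1.15--16]{OSS} for reflexivity and invertibility. Both constructions rely on $Y$ being smooth and both give a legitimate $\sC$, so this is a cosmetic rather than substantive divergence. The real difference is in the factorization step: the paper observes that $\sC^{\otimes m}$ is saturated in $\Sym^{m\cdot n}\Omega^1_Y(\log(\Gamma+E_\Gamma))$, so the map $\bar\iota^{[m]}$ (produced directly by Corollary~\ref{cor:extension}) factors through $\sC^{\otimes m}$ automatically once it does so generically; your argument instead builds the map $\psi^{[*]}\sA^{[m]}\to\sC^{\otimes m}$ by hand from the tensor power $\bigl(\psi^*\sA\bigr)^{\otimes m}\to\sC^{\otimes m}$ and extends across the double dual using reflexivity of $\sC^{\otimes m}$. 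Your route is a bit longer and requires the standard (but worth stating explicitly) lemma that $\Hom(\sF,\sG)\to\Hom(\sF^{**},\sG)$ is bijective for $\sG$ reflexive and $\sF$ torsion-free; with the saturation in hand, the paper's factorization is a one-line consequence of torsion-freeness of the quotient. Both arguments are sound; the paper's is slightly cleaner.
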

\begin{proof}
  By Corollary~\ref{cor:extension}, $\psi^{[*]}\sA$ embeds into
  $\Sym^{n} \Omega^1_Y(\log (\Gamma + E_\Gamma))$.  Let $\sC \subset
  \Sym^n \Omega^1_Y(\log (\Gamma + E_\Gamma))$ be the saturation of
  the image, which is automatically reflexive by \cite[Lem.~1.1.16 on
  p.~158]{OSS}. By \cite[Lem.~1.1.15 on p.~154]{OSS}, $\sC$ is then
  invertible as desired. Further observe that for any $m\in\bN$, the
  subsheaf $\sC^{\otimes m} \subset \Sym^{m\cdot n} \Omega^1_Y(\log
  (\Gamma + E_\Gamma))$ is likewise saturated.  Again, by
  Corollary~\ref{cor:extension}, there exists an embedding,
  $$
  \bar \iota^{[m]} : \psi^{[*]}\sA^{[m]} \into \Sym^{m\cdot n}
  \Omega^1_Y(\log (\Gamma + E_\Gamma)).
  $$
  It is easy to see that $\bar\iota^{[m]}$ factors through
  $\sC^{\otimes m}$ as it does so on the open set where $\psi$ is
  isomorphic, and because $\sC^{\otimes m}$ is saturated.
\end{proof}

\begin{rem}\label{rem:VZext2}
  Under the conditions of Theorem~\ref{thm:VZsheafextension2}, observe
  that the Kodaira-Iitaka dimension of $\sC$ is at least the
  Kodaira-Iitaka dimension of $\sA$, i.e., $\kappa(\sC) \geq
  \kappa(\sA)$.
\end{rem}

\section{Viehweg-Zuo sheaves on log minimal models}
\label{sec:VZ}

The existence of a Viehweg-Zuo sheaf of positive Kodaira-Iitaka
dimension clearly has consequences for the geometry of the underlying
space. The following theorem will later be used to show that a given
pair is a Mori-Fano fiber space. This will turn out to be a key step
in the proof of our main results. We refer to Definition~\ref{def:VZ}
for the notion of a Viehweg-Zuo sheaf.

\begin{thm}\label{thm:b2thm}
  Let $(Z,\Delta)$ be a reduced pair such that $Z$ is a normal and
  $\Q$-factorial surface. Assume that the following holds:
  \begin{enumerate}
  \item there exists a Viehweg-Zuo sheaf $\sA \subset \Sym^{[n]}
    \Omega^1_Z(\log \Delta)$ of positive Kodaira-Iitaka dimension,
  \item the extension theorem holds for $(Z,\Delta)$, and
  \item\ilabel{il:c} the anti-log-canonical divisor $-(K_Z + \Delta)$ is
    nef.
  \end{enumerate}
  Then $\rho(Z) > 1$.
\end{thm}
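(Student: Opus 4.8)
The plan is to argue by contradiction: assume $\rho(Z)=1$ and derive a contradiction with the Bogomolov–Sommese vanishing theorem on a log-resolution. The strategy has three movements. First, use the hypothesis $\rho(Z)=1$ together with $\Q$-factoriality to upgrade the Viehweg–Zuo sheaf $\sA$ to something ample. Since $\rho(Z)=1$, the Néron–Severi group of $Z$ is one-dimensional, so every effective divisor is $\Q$-ample; in particular $\sA$, having a nonzero section of some reflexive power, is a $\Q$-ample rank-one reflexive sheaf. Now $\sA \subset \Sym^{[n]}\Omega^1_Z(\log\Delta)$ is a rank-one subsheaf of a rank-two sheaf. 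The point is to find a rank-one subsheaf $\sB \subset \Omega^{[1]}_Z(\log\Delta)$ that is still $\Q$-ample. One expects this via a destabilization argument: $\Sym^{[n]}\Omega^1_Z(\log\Delta)$ has a filtration whose graded pieces are reflexive powers of the two ``factors'' of $\Omega^{[1]}_Z(\log\Delta)$ (after passing to a suitable cover or working at the level of the resolution where the bundle genuinely splits in a sub/quotient sense); the $\Q$-ample subsheaf $\sA$ must map nontrivially into one of these graded pieces, and chasing this back produces a $\Q$-ample rank-one reflexive subsheaf $\sB \subset \Omega^{[1]}_Z(\log\Delta)$. Here assumption \iref{il:c} enters: the nefness of $-(K_Z+\Delta)$ controls the ``other'' quotient, ensuring that positivity cannot leak entirely into the part governed by $K_Z+\Delta$, so that the positive subsheaf is forced to sit inside the degree-one piece.

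Second, apply the Extension Theorem. Let $\psi:(Y,\Gamma)\to(Z,\Delta)$ be a log-resolution. By hypothesis the extension theorem holds for $(Z,\Delta)$, so by Corollary~\ref{cor:extension} we have an embedding $\psi^{[*]}\Omega^{[1]}_Z(\log\Delta)\into\Omega^1_Y(\log(\Gamma+E_\Gamma))$, and more precisely, by Theorem~\ref{thm:VZsheafextension2} applied with $n=1$ to the Viehweg–Zuo sheaf $\sB$, we obtain an \emph{invertible} subsheaf $\sC \subset \Omega^1_Y(\log(\Gamma+E_\Gamma))$ with $\kappa(\sC)\geq\kappa(\sB)$. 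Since $\sB$ was $\Q$-ample on the surface $Z$, we have $\kappa(\sB)=2$, hence $\kappa(\sC)=2$, i.e.\ $\sC$ is a big invertible subsheaf of $\Omega^1_Y(\log(\Gamma+E_\Gamma))$.

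Third, invoke Bogomolov–Sommese vanishing for the log-smooth pair $(Y,\Gamma+E_\Gamma)$: a big invertible subsheaf $\sC$ of $\Omega^p_Y(\log D_Y)$ satisfies $\kappa(\sC)\leq p$. With $p=1$ this forces $\kappa(\sC)\leq 1$, contradicting $\kappa(\sC)=2$. Therefore the assumption $\rho(Z)=1$ is untenable, and since $\rho(Z)\geq 1$ always, we conclude $\rho(Z)>1$.

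The main obstacle is the first movement — extracting a $\Q$-ample \emph{one-form} subsheaf $\sB\subset\Omega^{[1]}_Z(\log\Delta)$ from the $\Q$-ample pluri-form subsheaf $\sA\subset\Sym^{[n]}\Omega^1_Z(\log\Delta)$. This is genuinely a stability/positivity argument on a singular $\Q$-factorial surface: one must make sense of the Harder–Narasimhan-type filtration of $\Sym^{[n]}\Omega^1_Z(\log\Delta)$ with respect to the (up to scaling unique, by $\rho(Z)=1$) polarization, show that $\sA$ meets a graded piece of the form $\sB^{[a]}\otimes(\text{power of }\sO_Z(K_Z+\Delta))^{[b]}$, and then use the nefness of $-(K_Z+\Delta)$ from \iref{il:c} to argue that the contribution of the $K_Z+\Delta$ factor is non-positive, so that the $\sB$-factor alone must already be $\Q$-ample. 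Handling reflexive hulls, the possible non-local-freeness of $\Omega^{[1]}_Z(\log\Delta)$ at the singular points, and the precise bookkeeping of slopes is where the real work lies; the extension step and the Bogomolov–Sommese step are then formal given the machinery already set up in Section~\ref{sec:extendingForms}.
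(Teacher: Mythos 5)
Your overall strategy is the one the paper uses: assume $\rho(Z)=1$, extract a $\Q$-ample rank-one subsheaf $\sB\subset\Omega^{[1]}_Z(\log\Delta)$, extend it via Theorem~\ref{thm:VZsheafextension2} to a big invertible subsheaf of $\Omega^1_Y(\log(\Gamma+E_\Gamma))$, and contradict Bogomolov--Sommese vanishing. Your second and third movements are correct and match the paper exactly.

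The gap is in the first movement, and it is a genuine one: you never actually produce $\sB$. The filtration you invoke --- ``graded pieces are reflexive powers of the two factors of $\Omega^{[1]}_Z(\log\Delta)$'' --- presupposes that $\Omega^{[1]}_Z(\log\Delta)$ already sits in a short exact sequence with rank-one sub and quotient, which is precisely what you are trying to establish. As written the plan is circular, and the appeal to ``passing to a suitable cover or working at the level of the resolution'' does not resolve this. The paper instead argues by restriction to a general complete intersection curve $C\subset Z$, which avoids $(Z,\Delta)_{\sing}$, so that $\Omega^1_Z(\log\Delta)\resto{C}$ is an honest rank-two vector bundle. Hypothesis~\iref{il:c} gives $\deg\Omega^1_Z(\log\Delta)\resto{C}=(K_Z+\Delta).C\leq 0$. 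Since $\sA\resto{C}$ is a positive-degree subsheaf of $\Sym^n\Omega^1_Z(\log\Delta)\resto{C}$, and since symmetric powers of a semi-stable bundle of non-positive degree are again semi-stable of non-positive degree (hence contain no positive-degree subsheaf), $\Omega^1_Z(\log\Delta)\resto{C}$ cannot be semi-stable; a fortiori $\Omega^{[1]}_Z(\log\Delta)$ is not semi-stable and its maximal destabilizing subsheaf $\sB$ has positive slope. Because $\rho(Z)=1$ and $Z$ is $\Q$-factorial, positive slope forces $\sB$ to be $\Q$-ample, whence $\kappa(\sB)=2$. This is the step your outline needs; it is not circular, it works directly with the singular $\Q$-factorial surface, and it is where the nefness hypothesis is actually used.
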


\begin{proof}
  We argue by contradiction and assume that $\rho(Z)=1$. Let $C
  \subset Z$ be a general complete intersection curve. Since $C$ is
  general, it avoids the singular locus $(Z,\Delta)_{\sing}$.
  By~\iref{il:c}, the restriction $\Omega^1_Z(\log \Delta)\resto {C}$ is a
  vector bundle of non-positive degree, \CounterStep
  \begin{equation}\label{eq:degneg}
    \deg \Omega^1_Z(\log \Delta)\resto C = (K_Z+\Delta).C \leq 0.
  \end{equation}
  
  We claim that the restriction $\Omega^1_Z(\log \Delta)\resto C$ is
  not anti-nef, i.e., that the dual vector bundle $\Omega^1_Z(\log
  \Delta)^*\resto C$ is not nef. In particular, we claim that
  $\Omega^1_Z(\log \Delta)\resto C$ admits a subsheaf of positive
  degree. Indeed, if $\Omega^1_Z(\log \Delta)\resto C$ were anti-nef,
  then none of its symmetric products $\Sym^n \Omega^1_Z(\log
  \Delta)\resto C$ could contain a subsheaf of positive degree.
  However, since $C$ is general, the restriction of the Viehweg-Zuo
  sheaf to $C$ is a locally free subsheaf $\sA\resto C \subset \Sym^n
  \Omega^1_Z(\log \Delta)\resto C$ of positive Kodaira-Iitaka
  dimension, and hence of positive degree. This proves the claim.
  
  As a consequence of the claim and of Equation~\eqref{eq:degneg}, we
  obtain that $\Omega^{[1]}_Z(\log \Delta)$ is not semi-stable and if
  $\sB \subset \Omega^{[1]}_Z(\log \Delta)$ denotes the maximal
  destabilizing subsheaf, its slope $\mu(\sB)$ is positive. The
  assumption that $\rho(Z)=1$ and $\mathbb Q$-factoriality then
  guarantees that $\sB$ is $\mathbb Q$-ample. In particular, its
  Kodaira-Iitaka dimension is maximal, $\kappa(\sB) = 2$.
  
  Now consider a log-resolution $\psi: (Y, \Gamma) \to (Z,\Delta)$ as
  in Definition~\ref{def:extthm}. The Extension Theorem for
  Viehweg-Zuo sheaves, Theorem~\ref{thm:VZsheafextension2},
  Remark~\ref{rem:VZext2}, and the assumption that $\rho(Z)=1$
  guarantee the existence of a Viehweg-Zuo sheaf $\sC \subset
  \Omega^1_Y(\log \Gamma+E_\Gamma)$ of Kodaira-Iitaka dimension
  $\kappa(\sC)=2$. As there are no symmetric tensors involved, this
  contradicts the Bogomolov-Sommese vanishing theorem,
  \cite[Cor.~6.9]{EV92}.
\end{proof}

\section{Global index-one covers for varieties of logarithmic Kodaira
  dimension 0}
\label{sec:indexcoverk0}

In this section, we consider a smooth pair $(Y,D)$ of Kodaira
dimension $0$, go to a minimal model and take the global index-one
cover.  If $(Y,D)$ carries a Viehweg-Zuo sheaf $\sA \subset \Sym^n
\Omega^1_Y(\log D)$ of positive Kodaira-Iitaka dimension, then we show
that the cover is uniruled and that its boundary is not empty. All
results of this section hold in arbitrary dimension.

\subsection{Construction of the cover}

First we briefly recall the main properties of the index-one cover, as
described in \cite[2.52]{KM98} or \cite[Sect.~3.6f]{Reid87}.

\begin{prop}\label{prop:index-cover}
  Let $(Y, D)$ be a reduced, log-smooth pair of dimension $\dim Y \geq
  2$ and Kodaira dimension $\kappa(Y,D) = 0$. Assume that there exists
  a birational map $\lambda : Y \dasharrow Y_\lambda$ to a normal
  variety $Y_\lambda$, such that the following holds. 
  \begin{enumerate-p}
  \item The inverse $\lambda^{-1}$ does not contract any divisor.
  \item $(Y_\lambda, D_\lambda)$ is a log minimal model of $(Y,D)$,
    where $D_\lambda$ denotes the cycle-theoretic image of $D$. 
  \item The log abundance conjecture holds for $(Y_\lambda,
    D_\lambda)$.
  \end{enumerate-p}
  Then there exists a diagram
  $$
  \xymatrix{ Y \ar@{-->}[d]_{\lambda} &&&& \wtilde Y
    \ar[d]^{\txt{\scriptsize $\wtilde \lambda$\\\scriptsize log
        resolution}} \\
    Y_\lambda &&&& \wtilde Y_\lambda \ar[llll]_{\txt{\scriptsize
        $\gamma$, index-one cover}}^{\txt{\scriptsize finite, étale
        where $Y_\lambda$ is smooth}} }
  $$
  with the following properties.
  \begin{enumerate-p}
  \setcounter{enumi}{\value{equation}}
\item\ilabel{il:431} If $\wtilde D_\lambda := \gamma^*(D_\lambda)$, then
  $K_{\wtilde Y_\lambda}+\wtilde D_\lambda$ is Cartier with
  $\O_{\wtilde Y_\lambda}(K_{\wtilde Y_\lambda}+\wtilde D_\lambda)
  \simeq \O_{\wtilde Y_\lambda}$
  \item\ilabel{il:433} The pair $(\wtilde Y_\lambda, \wtilde D_\lambda)$
    is {dlt}. If $y \in \wtilde Y_\lambda$ is a point where
    $(\wtilde Y_\lambda, \wtilde D_\lambda)$ is not log-smooth, then
    $(\wtilde Y_\lambda, \wtilde D_\lambda)$ is canonical at $y$.
  \item\ilabel{il:434} If $\wtilde D = \wtilde \lambda^*(\wtilde
    D_\lambda)_{\red}$, then $\kappa(\wtilde Y,\wtilde D)=0$.
  \end{enumerate-p}
\end{prop}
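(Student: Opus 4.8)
The plan is to assemble the diagram in three stages: first producing the minimal model data and verifying the $\bQ$-factoriality and abundance consequences, then constructing the index-one cover $\gamma$ from the triviality of a pluri-log-canonical sheaf, and finally taking a log-resolution $\wtilde\lambda$ and checking the three numbered properties. I would begin by recording the structural facts that come for free: since $\kappa(Y,D)=0$ and $(Y_\lambda,D_\lambda)$ is a log minimal model, $K_{Y_\lambda}+D_\lambda$ is nef, and by the assumed log abundance conjecture it is semiample; because $\kappa(Y_\lambda,D_\lambda)=\kappa(Y,D)=0$ (the Kodaira dimension is a birational invariant of the pair since $\lambda^{-1}$ contracts no divisor, so no log-canonical sections are lost), semiampleness forces $K_{Y_\lambda}+D_\lambda \equiv 0$ and in fact $\O_{Y_\lambda}(r(K_{Y_\lambda}+D_\lambda)) \simeq \O_{Y_\lambda}$ for the minimal such positive integer $r$ (the index). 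Here one uses that a semiample divisor of Iitaka dimension $0$ is torsion in $\Pic$ up to the reflexive powers, i.e. that a sufficiently divisible multiple of the pluri-log-canonical sheaf is trivial.

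Next I would construct $\gamma: \wtilde Y_\lambda \to Y_\lambda$ as the index-one (cyclic) cover associated to the isomorphism $\O_{Y_\lambda}(r(K_{Y_\lambda}+D_\lambda)) \simeq \O_{Y_\lambda}$, following \cite[2.52]{KM98} or \cite[Sect.~3.6f]{Reid87}. This is a finite degree-$r$ morphism, \'etale in codimension one over the locus where $K_{Y_\lambda}+D_\lambda$ is Cartier — in particular \'etale where $Y_\lambda$ is smooth — and by the standard property of index-one covers it renders $K_{\wtilde Y_\lambda}+\wtilde D_\lambda$ Cartier with $\O_{\wtilde Y_\lambda}(K_{\wtilde Y_\lambda}+\wtilde D_\lambda)\simeq\O_{\wtilde Y_\lambda}$, where $\wtilde D_\lambda := \gamma^*(D_\lambda)$; this gives~\iref{il:431}. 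For~\iref{il:433}, note that $(Y_\lambda,D_\lambda)$ is dlt (being a log minimal model run from a log-smooth pair in the appropriate MMP), and since discrepancies only increase under finite covers \cite[5.20]{KM98}, $(\wtilde Y_\lambda, \wtilde D_\lambda)$ is again dlt; moreover at a point $y$ where it is not log-smooth, the fact that $K_{\wtilde Y_\lambda}+\wtilde D_\lambda$ is Cartier combined with a discrepancy estimate (as in the argument of Lemma~\ref{ex:dltsurfisfd}: the Cartier boundary contributes at least $1$ to every exceptional multiplicity, so $a(E,\wtilde Y_\lambda,\emptyset)\geq a(E,\wtilde Y_\lambda,\wtilde D_\lambda)+1\geq 0$, hence $\geq 0$) shows $(\wtilde Y_\lambda,\emptyset)$, and a fortiori the pair, is canonical at $y$.

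Finally, take any log-resolution $\wtilde\lambda: \wtilde Y \to \wtilde Y_\lambda$ and set $\wtilde D := \wtilde\lambda^*(\wtilde D_\lambda)_{\red}$; property~\iref{il:434} that $\kappa(\wtilde Y,\wtilde D)=0$ follows because $K_{\wtilde Y_\lambda}+\wtilde D_\lambda$ is trivial and $(\wtilde Y_\lambda,\wtilde D_\lambda)$ is dlt (in particular log canonical), so pulling back to the resolution changes $K+D$ only by exceptional divisors with discrepancy $\geq -1$, whence $h^0(\wtilde Y, m(K_{\wtilde Y}+\wtilde D))$ stays bounded (equal to $h^0$ of a trivial sheaf plus effective exceptional corrections that contribute nothing), giving $\kappa = 0$; it is certainly $\geq 0$ since $h^0(\O)\neq 0$. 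The main obstacle I anticipate is purely bookkeeping in stage one — pinning down the precise integer $r$ and the triviality of the reflexive $r$-th power from semiampleness plus $\kappa=0$, and making sure the cover is taken with respect to $K+D$ and not just $K$ so that the boundary $\wtilde D_\lambda$ is compatible — but no genuinely hard input beyond log abundance (which is hypothesized) is needed; everything else is the standard index-one cover package together with the discrepancy monotonicity under finite morphisms.
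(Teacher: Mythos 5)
Your overall architecture matches the paper's: you construct $\gamma$ as the index-one cover associated to the trivial pluri-log-canonical sheaf, read off~\iref{il:431} from the standard index-one-cover package, get dlt from discrepancy monotonicity under finite covers, and descend~\iref{il:434} from the resolution. (The semiampleness detour at the start is unnecessary: the paper's very definition of ``the log abundance conjecture holds for $(Y_\lambda,D_\lambda)$'' already hands you a $k$ with $k(K_{Y_\lambda}+D_\lambda)$ Cartier and trivial, so nothing about Iitaka fibrations or torsion in $\Pic$ needs to be invoked.)

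There is, however, a genuine gap in your argument for the canonicity part of~\iref{il:433}. You model it on the proof of Lemma~\ref{ex:dltsurfisfd}, writing $a(E,\wtilde Y_\lambda,\emptyset)\ge a(E,\wtilde Y_\lambda,\wtilde D_\lambda)+1\ge 0$ and concluding that ``$(\wtilde Y_\lambda,\emptyset)$, and a fortiori the pair, is canonical at $y$.'' This has three problems. First, the ``a fortiori'' goes the wrong way: for an effective boundary one has $a(E,X,D)\le a(E,X,\emptyset)$, so canonicity of $(\wtilde Y_\lambda,\emptyset)$ does \emph{not} imply canonicity of $(\wtilde Y_\lambda,\wtilde D_\lambda)$ --- the implication you need is the converse of what you prove. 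Second, the inequality $a(E,\emptyset)\ge a(E,\wtilde D_\lambda)+1$ requires the center of $E$ to lie in $\wtilde D_\lambda$ and $\wtilde D_\lambda$ to be Cartier; but the non-log-smooth point $y$ need not lie on $\wtilde D_\lambda$ at all, and what is Cartier here is $K_{\wtilde Y_\lambda}+\wtilde D_\lambda$, not $\wtilde D_\lambda$ itself. Third, you never actually use the Cartier-ness of $K_{\wtilde Y_\lambda}+\wtilde D_\lambda$, which is the crucial input. The correct argument (the one the paper uses) is shorter and direct: dlt says $a(E,\wtilde Y_\lambda,\wtilde D_\lambda)>-1$ for every divisor $E$ lying over a non-log-smooth point $y$; Cartier-ness of $K_{\wtilde Y_\lambda}+\wtilde D_\lambda$ forces this discrepancy to be an integer; an integer $>-1$ is $\ge 0$, so the \emph{pair} $(\wtilde Y_\lambda,\wtilde D_\lambda)$ is canonical at $y$. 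Lemma~\ref{ex:dltsurfisfd}'s argument is aimed at a different conclusion ($(Z,\emptyset)$ canonical, needed to classify the singularity of $Z$) and is the wrong template here.

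This gap also undermines your argument for~\iref{il:434}. You invoke only log-canonicity ($a\ge -1$) when pulling back to the resolution, but $\wtilde D=\wtilde\lambda^*(\wtilde D_\lambda)_{\red}$ contains only those $\wtilde\lambda$-exceptional divisors whose center lies in $\wtilde D_\lambda$. For an exceptional $E_i$ whose center is a non-log-smooth point off the boundary, $E_i$ does not enter $\wtilde D$, and the coefficient of $E_i$ in $K_{\wtilde Y}+\wtilde D-\wtilde\lambda^*(K_{\wtilde Y_\lambda}+\wtilde D_\lambda)$ is $a_i$ itself, not $a_i+1$; log-canonicity only gives $a_i\ge -1$, which does not yield effectivity. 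You need the canonicity statement of~\iref{il:433} --- $a_i\ge 0$ for all $\wtilde\lambda$-exceptional $E_i$ --- to conclude that $K_{\wtilde Y}+\wtilde D$ is (linearly equivalent to) an effective $\wtilde\lambda$-exceptional divisor, whence $\kappa(\wtilde Y,\wtilde D)=0$. So~\iref{il:434} genuinely depends on~\iref{il:433}, and both need to be fixed as above.
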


For the reader's convenience, we recall a few notions of higher
dimensional geometry used in the formulation of
Proposition~\ref{prop:index-cover}.

\begin{notation}
  A log minimal model is a {dlt} pair $(Y_{\lambda},
  D_{\lambda})$ where $Y_\lambda$ is $\bQ$-factorial and where
  $K_{Y_\lambda}+D_\lambda$ is nef, cf.~\cite[3.29--31]{KM98}.  If
  $(Y_{\lambda}, D_{\lambda})$ is a log minimal model and has Kodaira
  dimension $\kappa({Y_\lambda},D_\lambda)=0$, we say that \emph{the
    log abundance conjecture holds for $(Y_\lambda, D_\lambda)$} if
  there exists a number $k \in \bN^+$ such that $k \cdot
  (K_{Y_\lambda}+D_\lambda)$ is Cartier and $\O_{Y_\lambda}
  \bigl(k\cdot(K_{Y_\lambda}+D_\lambda)\bigr) \simeq \O_{Y_\lambda}$,
  cf.~\cite[3.12]{KM98}.
\end{notation}

\begin{rem}
  The existence of log minimal models and log abundance for minimal
  models is currently known for $\dim Y \leq 3$, see \cite[3.13]{KM98}
  for references concerning abundance. Both are expected to hold in
  any dimension\PreprintAndPublication{---see \cite{BCHM06, Siu06} for
    the latest progress}{}.
\end{rem}

\begin{proof}[Proof of Proposition~\ref{prop:index-cover}]
  Let $k \in \bN^+$ be the index of $K_{Y_\lambda}+D_\lambda$, i.e.,
  the smallest number such that
  $\O_{Y_\lambda}\bigl(k\cdot(K_{Y_\lambda}+D_\lambda)\bigr) \simeq
  \O_{Y_\lambda}$ and let $\gamma: \wtilde Y_\lambda\to Y_\lambda$ be
  the associated index-one cover.  We obtain that $K_{\wtilde
    Y_\lambda} + \wtilde D_\lambda$ is a Cartier divisor for the
  trivial bundle, as claimed in~\iref{il:431}.
  
  The assertion that $\wtilde Y_{\lambda}$ is {dlt} follows from the
  definition and from the fact that discrepancies increase under
  finite morphisms, \cite[5.20]{KM98}. If $y \in \wtilde Y_{\lambda}$
  is any point where $(\wtilde Y_\lambda, \wtilde D_\lambda)$ is not
  log-smooth, then by the definition of {dlt}, the discrepancy of any
  divisor $E$ that lies over $y$ is $a(E,\wtilde Y_{\lambda}, \wtilde
  D_{\lambda}) > -1$. But since $K_{\wtilde Y_{\lambda}} + \wtilde
  D_{\lambda}$ is Cartier, this number must be an integer, so
  $a(E,\wtilde Y_{\lambda}, \wtilde D_{\lambda}) \geq 0$.  It follows
  that the pair $(\wtilde Y_{\lambda}, \wtilde D_{\lambda})$ is
  canonical at $y$, hence \iref{il:433} is shown.
  
  It remains to prove that $\kappa({\wtilde Y}, \wtilde D)=0$, as
  claimed in~\iref{il:434}.  Since $(\wtilde Y_\lambda, \wtilde
  D_\lambda)$ is canonical wherever it is not log-smooth, the
  definition of canonical, \cite[2.26, 2.34]{KM98}, implies that
  $K_{\wtilde Y} +\wtilde D$ is represented by an effective, $\wtilde
  \lambda$-exceptional divisor, hence \iref{il:434} follows.
\end{proof}

\begin{cor}\label{cor:singofcoverdim2}
  Under the conditions of Proposition~\ref{prop:index-cover} further
  assume that $\dim Y = 2$. Then $(\wtilde Y_{\lambda}, \wtilde
  D_{\lambda})$ is log-smooth along $\wtilde D_{\lambda}$ and $\wtilde
  Y_\lambda$ is $\mathbb Q$-factorial.
\end{cor}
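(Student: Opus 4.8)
The plan is to prove Corollary~\ref{cor:singofcoverdim2} by combining the structural information from Proposition~\ref{prop:index-cover} with the well-understood classification of surface singularities. First I would recall from~\iref{il:433} that $(\wtilde Y_\lambda, \wtilde D_\lambda)$ is {dlt} and is moreover canonical at every point where it fails to be log-smooth. In dimension $2$, $\wtilde D_\lambda$ is a curve, so the key observation is that a {dlt} surface pair is automatically log-smooth along the boundary: at a point $y \in \wtilde D_\lambda$, the pair $(\wtilde Y_\lambda, \wtilde D_\lambda)$ is plt, and plt surface pairs with nonempty boundary have the property that $\wtilde Y_\lambda$ is smooth there and $\wtilde D_\lambda$ is an snc (indeed smooth) divisor---this is the standard local description of plt surface singularities, cf.~\cite[Sect.~3]{KM98} or the discrepancy tables in~\cite[3.40--3.41]{KM98}. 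This gives the first assertion.

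For the second assertion, that $\wtilde Y_\lambda$ is $\mathbb Q$-factorial, I would argue as follows. Away from $\wtilde D_\lambda$ the pair is just $(\wtilde Y_\lambda, \emptyset)$, which by~\iref{il:433} is canonical at its singular points; canonical surface singularities are the Du~Val (rational double point) singularities, which are well-known to be $\mathbb Q$-factorial---in fact they are analytically $\mathbb C^2$ modulo a finite subgroup of $\SL_2$, and quotient singularities are $\mathbb Q$-factorial. Along $\wtilde D_\lambda$ the space is smooth by the previous paragraph, hence certainly $\mathbb Q$-factorial there. Since $\mathbb Q$-factoriality is a condition that can be checked locally (a Weil divisor is $\mathbb Q$-Cartier iff it is so in a neighborhood of each point), combining the two open covers yields $\mathbb Q$-factoriality of all of $\wtilde Y_\lambda$.

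Alternatively, and perhaps more in the spirit of the rest of the paper, one can bypass the case distinction by invoking the same mechanism used in the proof of Lemma~\ref{ex:dltsurfisfd}: for any rational $0 < \varepsilon < 1$ the pair $(\wtilde Y_\lambda, (1-\varepsilon)\wtilde D_\lambda)$ is numerically dlt by~\cite[4.1]{KM98}, whence $\wtilde Y_\lambda$ is $\mathbb Q$-factorial by~\cite[4.11]{KM98}; this settles the $\mathbb Q$-factoriality claim in one step, and it only remains to extract log-smoothness along $\wtilde D_\lambda$ from the plt (equivalently, dlt along the boundary) condition together with the classification of plt surface pairs.

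The main obstacle, such as it is, is purely bookkeeping: one must be careful that the {dlt} hypothesis on $(\wtilde Y_\lambda, \wtilde D_\lambda)$ passes correctly to the local plt statement along the boundary and that one is citing the surface classification results of~\cite[Ch.~3, 4]{KM98} for the right coefficient (reduced boundary) so that ``dlt + boundary'' really does force smoothness of the ambient surface at boundary points, rather than merely a quotient singularity. There is no deep difficulty; the content is entirely a localization argument plus the standard dictionary between discrepancy conditions and explicit surface singularity types.
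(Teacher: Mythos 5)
Your argument for $\mathbb Q$-factoriality (the alternative route via the numerically dlt criterion and \cite[4.11]{KM98}) agrees with the paper and is correct. The log-smoothness part, however, has a genuine gap.

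You assert that ``plt surface pairs with nonempty boundary have the property that $\wtilde Y_\lambda$ is smooth there and $\wtilde D_\lambda$ is an snc divisor,'' and more generally that dlt surface pairs are ``automatically log-smooth along the boundary.'' Neither claim is true. The standard local description of a plt surface pair $(S,D)$ with $D$ reduced along the boundary (see \cite[4.15]{KM98}) is that of a cyclic quotient $(\bA^2,\text{line})/\bZ_n$, and for $n\geq 2$ the ambient surface is genuinely singular. A concrete example: let $S$ be the cone over a conic (an $A_1$ singularity) and $D$ the image of one ruling line through the vertex; the exceptional curve of the minimal resolution has discrepancy $-1/2$ with respect to $(S,D)$, so the pair is plt (and dlt) but not log-smooth at the vertex. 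You also implicitly use ``dlt $\Rightarrow$ plt along the boundary,'' which fails at snc nodes of $D$ where the blow-up has discrepancy exactly $-1$. So dlt alone is not enough to conclude anything about smoothness of $\wtilde Y_\lambda$ along $\wtilde D_\lambda$.

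The ingredient you are not using is the second half of \iref{il:433}: because the index-one cover makes $K_{\wtilde Y_\lambda}+\wtilde D_\lambda$ Cartier, the discrepancies are integers, and therefore the pair is \emph{canonical} (not merely dlt or plt) at every point where it is not log-smooth. It is this stronger hypothesis, together with the classification of canonical surface \emph{pair} singularities \cite[4.5]{KM98}, that forces log-smoothness along $\wtilde D_\lambda$: a pair that is canonical at $y \in \wtilde D_\lambda$ with reduced $\wtilde D_\lambda$ must have $\wtilde Y_\lambda$ smooth and $\mult_y \wtilde D_\lambda \leq 1$ there, which is precisely log-smoothness. (Your counterexample above has discrepancy $-1/2$, which the dlt/plt condition tolerates but the canonical condition excludes.) So the bookkeeping obstacle you flagged at the end is in fact a real obstruction: the ``dictionary between discrepancy conditions and surface singularity types'' at the plt/dlt level gives quotient singularities, not smooth points, and the argument only closes once you invoke canonicity.

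Finally, your first $\mathbb Q$-factoriality argument (Du Val away from the boundary, smooth along it, patch) is logically downstream of the log-smoothness claim and therefore inherits the same gap; but your second argument via \cite[4.11]{KM98} is self-contained and is the one the paper actually uses.
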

\begin{proof}
  The $\mathbb Q$-factoriality follows from~\iref{il:433} and
  \cite[4.11]{KM98}. Log-smoothness follows from the classification of
  canonical surface singularities, \cite[4.5]{KM98}.
\end{proof}

\subsection{The index-one cover in the presence of a Viehweg-Zuo
  sheaf}

We will later consider the index-one cover in the presence of a
Viehweg-Zuo sheaf $\sA$.  If $\kappa(\sA) > 0$, we will show that
$\wtilde Y$ is uniruled, and that the boundary cannot be empty. A
similar line of argumentation was used in \cite{KK05, KK07a}.

\begin{prop}\label{prop:unirulednessofbaseK0}
  Under the conditions of Proposition~\ref{prop:index-cover} further
  assume that there exists a Viehweg-Zuo sheaf $\sA \subset \Sym^n
  \Omega^1_Y(\log D)$ of positive Kodaira-Iitaka dimension,
  $\kappa(\sA) > 0$. Then $Y$ and $\wtilde Y$ are uniruled.
\end{prop}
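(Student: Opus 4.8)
The plan is to produce a rational curve through a general point of $\wtilde Y$ (equivalently, of $\wtilde Y_\lambda$) by combining the triviality of $K_{\wtilde Y_\lambda}+\wtilde D_\lambda$ with the existence of the pulled-back Viehweg-Zuo sheaf, and then to invoke a Miyaoka–Mori-type bend-and-break argument. Since $\wtilde Y \to Y$ is generically finite and dominant (it factors through the index-one cover $\gamma$ and the resolution $\wtilde\lambda$), uniruledness of $\wtilde Y$ forces uniruledness of $Y$; so it suffices to treat $\wtilde Y$, and in fact it is cleanest to work on $\wtilde Y_\lambda$ first and pull back.

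First I would transport the Viehweg-Zuo sheaf to the minimal model. By Theorem~\ref{thm:VZsheafextension2} (applicable because the extension theorem holds for {dlt}, hence {dlc}, pairs, cf.\ Example~\ref{ex:dltext}, and we may run the argument on the relevant resolution), the sheaf $\sA$ gives an invertible Viehweg-Zuo sheaf $\sC \subset \Sym^n \Omega^1_{\wtilde Y}(\log \wtilde D)$ with $\kappa(\sC) \geq \kappa(\sA) > 0$ by Remark~\ref{rem:VZext2}; pushing forward and taking reflexive hulls, one gets a rank-one reflexive $\sC_\lambda \subset \Sym^{[n]}\Omega^1_{\wtilde Y_\lambda}(\log \wtilde D_\lambda)$ with $\kappa(\sC_\lambda)>0$ as well, since $\wtilde\lambda$ is birational. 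The point of moving to $\wtilde Y_\lambda$ is that there $K_{\wtilde Y_\lambda}+\wtilde D_\lambda \sim 0$ by \iref{il:431}, and the pair is {dlt}.

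Next comes the core argument: show $-K_{\wtilde Y_\lambda}$ is not pseudo-effective, hence $\wtilde Y_\lambda$ is uniruled. Suppose for contradiction that $\wtilde Y_\lambda$ is not uniruled. Since $K_{\wtilde Y_\lambda} \sim -\wtilde D_\lambda$ is then pseudo-effective (being the negative of an effective divisor would be the wrong sign, so I must argue differently): in the non-uniruled case $K_{\wtilde Y_\lambda}$ itself is pseudo-effective by Boucksom–Demailly–Păun–Peternell (or Miyaoka–Mori for surfaces / the relevant cases where abundance is known), while $K_{\wtilde Y_\lambda}+\wtilde D_\lambda\sim 0$ forces $\wtilde D_\lambda$ to be anti-pseudo-effective; as $\wtilde D_\lambda$ is effective this gives $\wtilde D_\lambda = 0$ and $K_{\wtilde Y_\lambda}\sim 0$. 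Now I would run a positivity/slope argument: restrict $\sC_\lambda \subset \Sym^{[n]}\Omega^1_{\wtilde Y_\lambda}$ to a general complete intersection curve $C$ avoiding the (codimension $\geq 2$) singular locus; positivity of $\kappa(\sC_\lambda)$ forces $\deg(\sC_\lambda|_C) > 0$ after possibly passing to a symmetric power, so $\Sym^n \Omega^1_{\wtilde Y_\lambda}|_C$ has a quotient — equivalently $T_{\wtilde Y_\lambda}|_C$ a subsheaf, after dualizing — of the appropriate sign, contradicting the semi-negativity forced by $K_{\wtilde Y_\lambda}\equiv 0$ together with the pseudo-effectivity of $\Omega^1$ on non-uniruled varieties (here one uses that for a non-uniruled $\wtilde Y_\lambda$ with $K\equiv 0$, $\Omega^1$ is generically semi-positive, so no such positive piece inside a symmetric power can exist). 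This contradiction shows $\wtilde Y_\lambda$ is uniruled.

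The main obstacle I anticipate is the logarithmic bookkeeping in the last step: matching the sign conventions between "$\sC_\lambda$ of positive Kodaira-Iitaka dimension inside $\Sym^{[n]}\Omega^1(\log\wtilde D_\lambda)$" and the generic semi-positivity statement for non-uniruled varieties, which is naturally phrased without the boundary. The clean way around it is precisely the reduction above — using \iref{il:431} to kill the boundary ($\wtilde D_\lambda=0$) in the non-uniruled case before invoking Miyaoka's generic semi-positivity of $\Omega^1_{\wtilde Y_\lambda}$ — so that one never has to fight with $\log$ poles and pseudo-effectivity simultaneously. A secondary technical point is ensuring the general complete-intersection curve meets none of the bad loci (singularities of $\wtilde Y_\lambda$, non-log-smooth locus of the pair); this is harmless since all such loci have codimension $\geq 2$.
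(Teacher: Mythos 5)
Your proposal follows essentially the same route as the paper's proof: reduce to $\wtilde Y_\lambda$, assume for contradiction that it is not uniruled, use the triviality of $K_{\wtilde Y_\lambda}+\wtilde D_\lambda$ together with pseudo-effectivity of the canonical class (BDPP) to force $\wtilde D_\lambda=0$ and $K_{\wtilde Y_\lambda}\equiv 0$, then restrict the Viehweg-Zuo sheaf to a general complete intersection curve and contradict Miyaoka. The generic-semi-positivity statement you invoke is simply the contrapositive of the uniruledness criterion the paper cites via \cite[Cor.~5]{KST07}.

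A few points of imprecision worth flagging. First, the transport of the Viehweg-Zuo sheaf to $\wtilde Y_\lambda$ is simpler than your detour through Theorem~\ref{thm:VZsheafextension2}: the paper just pushes $\sA$ down to $Y_\lambda$ (Lemma~\ref{lem:pushdownA}) and pulls back along the index-one cover $\gamma$, which is \'etale in codimension one (Lemma~\ref{lem:VZontilde}); no appeal to the extension theorem is needed for this step. Second, the clause \emph{``equivalently $T_{\wtilde Y_\lambda}|_C$ a subsheaf, after dualizing''} is not literally correct: a positive-degree subsheaf of $\Sym^n\Omega^1|_C$ dualizes only to a negative-degree \emph{quotient} of $\Sym^n T|_C$, and one must still invoke the fact that symmetric powers of semi-stable bundles are semi-stable \cite[Cor.~3.2.10]{HL97}, combined with $\deg\Omega^1|_C=0$, to produce a positive-degree destabilizing subsheaf of $T|_C$ itself. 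The paper spells out exactly this semi-stability chain. Third, BDPP is stated for smooth varieties; the paper therefore phrases pseudo-effectivity for $K_{\wtilde Y}$ on the resolution and transfers the intersection computation to $\wtilde Y_\lambda$ via a moving curve avoiding the indeterminacy locus, rather than asserting pseudo-effectivity of $K_{\wtilde Y_\lambda}$ directly on the (possibly singular) minimal model as you do. None of these is a conceptual gap, but they are the spots where the argument needs tightening.
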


The following---rather elementary---statements are used in the proof
of Proposition~\ref{prop:unirulednessofbaseK0}. We formulate two
separate lemmas for later reference.

\begin{lem}\label{lem:pushdownA}
  Let $(Y,D)$ be a log-smooth pair and assume that there exists a
  Viehweg-Zuo sheaf $\sA \subset \Sym^n \Omega^1_Y(\log D)$. If
  $\lambda : Y \dasharrow Y_\lambda$ is a birational map whose inverse
  does not contract any divisor, $Y_\lambda$ is normal and $D_\lambda$
  is the cycle-theoretic image of $D$, then there exists a Viehweg-Zuo
  sheaf $\sA_\lambda \subset \Sym^{[n]} \Omega^1_{Y_\lambda}(\log
  D_\lambda)$ of Kodaira-Iitaka dimension $\kappa(\sA_\lambda)\geq
  \kappa(\sA)$.
\end{lem}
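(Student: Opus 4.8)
The plan is to exploit the hypothesis that $\lambda^{-1}$ does not contract any divisor, which means that $\lambda$ is an isomorphism in codimension one between suitable open sets. First I would choose open sets $U \subset Y$ and $U_\lambda \subset Y_\lambda$ with complements of codimension $\geq 2$ such that $\lambda$ restricts to an isomorphism $U \xrightarrow{\sim} U_\lambda$; since $\lambda^{-1}$ contracts no divisor, every prime divisor of $Y_\lambda$ is the strict transform of a prime divisor of $Y$, so after removing the $\lambda$-exceptional divisors from $Y$ and the images of the ``missing'' locus from $Y_\lambda$ we may arrange that, moreover, $D \cap U$ maps isomorphically onto $D_\lambda \cap U_\lambda$ under this restriction (here we use that $D_\lambda$ is the cycle-theoretic image of $D$, so no new components appear and the multiplicities are reduced). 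On $U$ the sheaf $\Sym^n \Omega^1_Y(\log D)$ restricts to $\Sym^n \Omega^1_{U}(\log (D\cap U))$, which under $\lambda|_U$ is identified with $\Sym^n\Omega^1_{U_\lambda}(\log(D_\lambda\cap U_\lambda)) = (\Sym^{[n]}\Omega^1_{Y_\lambda}(\log D_\lambda))|_{U_\lambda}$.

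Next I would transport the subsheaf. The composite $\sA|_U \hookrightarrow \Sym^n\Omega^1_U(\log(D\cap U)) \cong (\Sym^{[n]}\Omega^1_{Y_\lambda}(\log D_\lambda))|_{U_\lambda}$ gives a rank-one subsheaf of a reflexive sheaf on the open set $U_\lambda$; I then define $\sA_\lambda$ to be the saturation inside $\Sym^{[n]}\Omega^1_{Y_\lambda}(\log D_\lambda)$ of the pushforward $(j_\lambda)_*$ of this image, where $j_\lambda : U_\lambda \hookrightarrow Y_\lambda$ is the inclusion. Since $Y_\lambda$ is normal and $\codim(Y_\lambda \setminus U_\lambda) \geq 2$, the pushforward of a reflexive sheaf from $U_\lambda$ is reflexive, and the saturation of a subsheaf of a reflexive sheaf is again reflexive of the same rank by \cite[Lem.~1.1.16 on p.~158]{OSS}; hence $\sA_\lambda$ is a reflexive rank-one subsheaf of $\Sym^{[n]}\Omega^1_{Y_\lambda}(\log D_\lambda)$, i.e.\ a Viehweg-Zuo sheaf in the sense of Definition~\ref{def:VZ}.

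Finally, for the inequality on Kodaira-Iitaka dimensions: by construction $\sA_\lambda|_{U_\lambda} \supseteq \lambda_*(\sA|_U)$, so for each $m$ the reflexive power satisfies $\sA_\lambda^{[m]}|_{U_\lambda} \supseteq \lambda_*(\sA^{[m]}|_U)$, which yields an injection $H^0(U,\sA^{[m]}|_U) \hookrightarrow H^0(U_\lambda, \sA_\lambda^{[m]})$. Because $Y$ is normal (indeed smooth) and $Y\setminus U$ has codimension $\geq 2$, restriction gives $H^0(Y,\sA^{[m]}) \cong H^0(U,\sA^{[m]}|_U)$, and likewise $H^0(Y_\lambda,\sA_\lambda^{[m]}) \cong H^0(U_\lambda,\sA_\lambda^{[m]}|_{U_\lambda})$ by reflexivity and normality. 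Composing these identifications with the injection above produces, for every $m$, an inclusion $H^0(Y,\sA^{[m]}) \hookrightarrow H^0(Y_\lambda,\sA_\lambda^{[m]})$ compatible with multiplication, hence a dominant rational map $\overline{\phi_m(Y_\lambda)} \dashrightarrow \overline{\phi_m(Y)}$ on the images of the associated pluricanonical-type maps; taking the maximum over $m$ gives $\kappa(\sA_\lambda) \geq \kappa(\sA)$. The only slightly delicate point—and the step I would be most careful about—is the first one: verifying that the open sets can be chosen so that $\lambda|_U$ is an isomorphism of \emph{pairs}, i.e.\ that the identification of log-differential sheaves is legitimate on $U_\lambda$; this rests squarely on the two hypotheses that $\lambda^{-1}$ contracts no divisor and that $D_\lambda$ is the cycle-theoretic (not merely set-theoretic) image of $D$, so that no divisorial discrepancy in the boundary is introduced over $U_\lambda$.
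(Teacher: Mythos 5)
Your approach is essentially the paper's: identify the open set $U_\lambda \subset Y_\lambda$ over which $\lambda^{-1}$ is a (well-defined, injective) morphism, note that its complement has codimension $\geq 2$ because $\lambda^{-1}$ contracts no divisor, transport the subsheaf $\sA$ across the induced isomorphism $U\simeq U_\lambda$, push forward to $Y_\lambda$ (and saturate), and compare global sections. One small but real slip: you also assert that $U$ can be chosen with $\codim_Y(Y\setminus U)\geq 2$, which is false whenever $\lambda$ itself contracts a divisor --- there is no hypothesis preventing that, and in the paper's applications $\lambda$ typically does contract divisors. You then use this to claim $H^0(Y,\sA^{[m]})\cong H^0(U,\sA^{[m]}\resto U)$. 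Fortunately, only the injection $H^0(Y,\sA^{[m]})\hookrightarrow H^0(U,\sA^{[m]}\resto U)$ is needed for the final inequality $\kappa(\sA_\lambda)\geq\kappa(\sA)$, and restriction of a torsion-free sheaf to any dense open set gives that injection without any codimension hypothesis on the $Y$-side. With that correction (and noting the paper's proof even suppresses the saturation step, since the pushforward from $U_\lambda$ of a reflexive sheaf is already reflexive and lands in $\Sym^{[n]}\Omega^1_{Y_\lambda}(\log D_\lambda)$ by Fact~\ref{fact:pullback}), your argument matches the one in the paper.
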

\begin{proof}
  The assumption that $\lambda^{-1}$ does not contract any divisors
  and the normality of $Y_\lambda$ guarantee that $\lambda^{-1}:
  Y_\lambda \dasharrow Y$ is well-defined and injective over an open
  subset $Y_\lambda^\circ \subset Y_\lambda$ whose complement has
  codimension $\codim_{Y_\lambda} (Y_\lambda \setminus
  Y_\lambda^\circ) \geq 2$.  In particular,
  $D_\lambda\resto{Y_\lambda^\circ} = \bigl(
  \lambda^{-1}\resto{Y_\lambda^\circ} \bigr)^{-1}D$. Let $\iota:
  Y_\lambda^\circ \into Y_\lambda$ denote the embedding and set
  $\sA_\lambda := \iota_* \bigl(
  (\lambda^{-1}\resto{Y_\lambda^\circ})^{[*]}\sA \bigr)$.
  Fact~\ref{fact:pullback} gives an inclusion $\sA_\lambda \subset
  \Sym^{[n]}\Omega^1_{Y_\lambda}(\log D_\lambda)$. By construction
  $h^0\bigl(Y_\lambda,\, \sA_\lambda^{[m]}\bigr) \geq h^0(Y,\,
  \sA^{\otimes m})$ for all $m>0$, hence $\kappa(\sA_\lambda)\geq
  \kappa(\sA)$.
\end{proof}

\begin{lem}\label{lem:VZontilde}
  Under the conditions of Proposition~\ref{prop:index-cover} further
  assume that there exists a Viehweg-Zuo sheaf $\sA \subset \Sym^n
  \Omega^1_Y(\log D)$. Then there exists a Viehweg-Zuo sheaf $\wt
  \sA_\lambda \subset \Sym^{[n]} \Omega^1_{\wtilde Y_\lambda}(\log
  \wtilde D_\lambda)$ of Kodaira-Iitaka dimension $\kappa(\wt
  \sA_\lambda) \geq \kappa(\sA)$.
\end{lem}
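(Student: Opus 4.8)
The plan is to combine Lemma~\ref{lem:pushdownA} with the index-one cover to transport the Viehweg-Zuo sheaf from $Y$ first to $Y_\lambda$ and then to $\wtilde Y_\lambda$. First I would apply Lemma~\ref{lem:pushdownA} to the birational map $\lambda : Y \dasharrow Y_\lambda$, whose inverse contracts no divisor by hypothesis~(4.3.1) of Proposition~\ref{prop:index-cover}: this yields a Viehweg-Zuo sheaf $\sA_\lambda \subset \Sym^{[n]} \Omega^1_{Y_\lambda}(\log D_\lambda)$ with $\kappa(\sA_\lambda) \geq \kappa(\sA)$. It then remains to pull $\sA_\lambda$ back along the finite index-one cover $\gamma : \wtilde Y_\lambda \to Y_\lambda$.

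Since $\gamma$ is finite and surjective between normal varieties, and by construction (see Proposition~\ref{prop:index-cover}) unramified over the smooth locus of $Y_\lambda$, the branch divisor of $\gamma$ is empty; in particular $\gamma$ is \'etale in codimension one. Therefore Fact~\ref{fact:pullback}, applied on the log-smooth loci of $(Y_\lambda, D_\lambda)$ and $(\wtilde Y_\lambda, \wtilde D_\lambda)$ with $\wtilde D_\lambda = \gamma^*(D_\lambda)$, provides an isomorphism $d\gamma : \gamma^*\Omega^1_{Y_\lambda}(\log D_\lambda) \to \Omega^1_{\wtilde Y_\lambda}(\log \wtilde D_\lambda)$ over that locus, hence an injection $\gamma^{[*]}\Sym^{[n]} \Omega^1_{Y_\lambda}(\log D_\lambda) \into \Sym^{[n]} \Omega^1_{\wtilde Y_\lambda}(\log \wtilde D_\lambda)$ of reflexive sheaves (the two agree in codimension one and both are reflexive, so the map, being injective there, is injective everywhere). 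Composing with $\gamma^{[*]}\iota$, where $\iota : \sA_\lambda \into \Sym^{[n]} \Omega^1_{Y_\lambda}(\log D_\lambda)$, and setting $\wt\sA_\lambda$ to be the reflexive hull of the image of $\gamma^{[*]}\sA_\lambda$, we obtain the desired inclusion $\wt\sA_\lambda \subset \Sym^{[n]} \Omega^1_{\wtilde Y_\lambda}(\log \wtilde D_\lambda)$.

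Finally, for the Kodaira-Iitaka dimension, I would use that $\gamma$ is finite: for every $m > 0$ the pull-back gives an injection $H^0\bigl(Y_\lambda,\, \sA_\lambda^{[m]}\bigr) \into H^0\bigl(\wtilde Y_\lambda,\, \wt\sA_\lambda^{[m]}\bigr)$, since a nonzero section remains nonzero after pulling back along a dominant morphism and $\wt\sA_\lambda^{[m]}$ contains the reflexive hull of $\gamma^{[*]}(\sA_\lambda^{[m]})$. Hence $\kappa(\wt\sA_\lambda) \geq \kappa(\sA_\lambda) \geq \kappa(\sA)$, which completes the proof. The only mildly delicate point is making sure the pull-back of reflexive sheaves behaves correctly across the (codimension-$\geq 2$) singular locus of $\wtilde Y_\lambda$; this is handled exactly as in the proof of Lemma~\ref{lem:pushdownA} and Corollary~\ref{cor:extension}, by checking the inclusion on the big open log-smooth set and then taking reflexive hulls, so I do not expect a genuine obstacle here.
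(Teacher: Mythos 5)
Your proposal is correct and follows essentially the same route as the paper: apply Lemma~\ref{lem:pushdownA} to produce $\sA_\lambda$ on $Y_\lambda$, then take the reflexive pull-back $\gamma^{[*]}\sA_\lambda$ and use that $\gamma$ is \'etale in codimension one (plus reflexivity) to embed it into $\Sym^{[n]}\Omega^1_{\wtilde Y_\lambda}(\log \wtilde D_\lambda)$. The paper states this more tersely, but the content is identical, and your explicit handling of the Kodaira-Iitaka inequality via finiteness of $\gamma$ is the argument the paper leaves implicit.
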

\begin{proof}
  Let $\sA_\lambda$ be defined as in Lemma~\ref{lem:pushdownA}, and
  set $\wt \sA_\lambda := \gamma^{[*]}\sA_\lambda$. The facts that
  $\wt \sA_\lambda$ is reflexive and that $\gamma$ is étale imply that
  there exists an embedding $\wt \sA_\lambda \to \Sym^{[n]}
  \Omega^1_{\wtilde Y_\lambda}(\log \wtilde D_\lambda)$, as claimed.
\end{proof}

\begin{proof}[Proof of Proposition~\ref{prop:unirulednessofbaseK0}]
  Since uniruledness is a birational property, and since images of
  uniruled varieties are again uniruled, it suffices to show the claim
  for $\wtilde Y_\lambda$.  We argue by contradiction and assume that
  $\wtilde Y_\lambda$ (and then also $\wtilde Y$) is \emph{not} uniruled
  ---by \cite[Cor.~0.3]{BDPP03} this is equivalent to assuming that
  $K_{\wtilde Y}$ is pseudo-effective.  Again by
  \cite[Thm.~0.2]{BDPP03}, this is in turn equivalent to the
  assumption that $K_{\wtilde Y}\cdot C \geq 0$ for all moving curves
  $C \subset \wtilde Y$.
  
  As a first step, we will show that the assumption implies that the
  (Weil) divisor $\wtilde D_\lambda$ is zero. To this end, choose a
  polarization of $\wtilde Y_{\lambda}$ and consider a general
  complete intersection curve $\wtilde C_{\lambda} \subset \wtilde
  Y_{\lambda}$.  Because $\wtilde C_{\lambda}$ is a complete
  intersection curve, it intersects the support of the effective
  divisor $\wtilde D_\lambda$ if the support is not empty.  By general
  choice, the curve $\wtilde C_{\lambda}$ is contained in the smooth
  locus of $\wtilde Y_\lambda$ and avoids the indeterminacy locus of
  $\wtilde \lambda^{-1}$.  Its preimage $\wtilde C := \wtilde
  \lambda^{-1}(\wtilde C_\lambda)$ is then a moving curve in $\wtilde
  Y$ which intersects $\wtilde D$ positively if and only if the Weil
  divisor $\wtilde D_\lambda$ is not zero.  But
  $$
  0 = (K_{\wtilde Y_\lambda} + \wtilde D_\lambda)\cdot \wtilde
  C_\lambda = (K_{\wtilde Y} + \wtilde D)\cdot \wtilde C =
  \underbrace{K_{\wtilde Y} \cdot \wtilde C}_{\geq 0, \text{ as
      $\wtilde C$ is moving}} + \underbrace{\wtilde
    D\vphantom{_{\wtilde Y}} \cdot \wtilde C}_{\geq 0, \text{ as
      $\wtilde C$ not in $\wtilde D$}},
  $$
  so $\wtilde D \cdot \wtilde C = 0$. In particular, $\wtilde
  D_\lambda$ is the zero divisor. This, combined with the fact that
  $\O_{\wtilde Y_\lambda}(K_{\wtilde Y_\lambda}+\wtilde D_\lambda)
  \simeq \O_{\wtilde Y_\lambda}$ implies that the canonical divisor
  $K_{\wtilde Y_\lambda}$ is Cartier and its associated sheaf is
  trivial. In particular, the restrictions $\Omega^1_{\wtilde
    Y_\lambda}\resto{\wtilde C_\lambda}$ and $T_{\wtilde
    Y_\lambda}\resto{\wtilde C_\lambda}$ are vector bundles of degree zero
  and so is the symmetric product $\Sym^n \Omega^1_{\wtilde
    Y_\lambda}\resto{\wtilde C_\lambda}$.
  
  Recall from Lemma~\ref{lem:VZontilde} that there exists a
  Viehweg-Zuo sheaf of positive Kodaira-Iitaka dimension, say $\wt
  \sA_\lambda \subset \Sym^{[n]} \Omega^1_{\wtilde Y_\lambda}(\log
  \wtilde D_\lambda)$. As ${\wtilde C_\lambda}$ is a general curve,
  the restriction $\wt \sA_\lambda\resto{\wtilde C_\lambda} \subset
  \Sym^n \Omega^1_{\wtilde Y_\lambda}\resto{\wtilde C_\lambda}$ has
  positive degree. In particular, $\Sym^n \Omega^1_{\wtilde
    Y_\lambda}\resto{\wtilde C_\lambda}$ is not semi-stable. Since
  symmetric products of semi-stable vector bundles are again
  semi-stable \cite[Cor.~3.2.10]{HL97}, this implies that
  $\Omega^1_{\wtilde Y_\lambda}\resto{\wtilde C_\lambda}$ is likewise
  not semi-stable. Since $\deg\Omega^1_{\wtilde
    Y_\lambda}\resto{\wtilde C_\lambda}=\deg T_{\wtilde
    Y_\lambda}\resto{\wtilde C_\lambda}=0$, this also implies that
  $T_{\wtilde Y_\lambda}\resto{\wtilde C_\lambda}$ is not semi-stable.

  In particular, the maximal destabilizing subsheaf of $T_{\wtilde
    Y_\lambda}\resto{\wtilde C_\lambda}$ is of positive degree, hence
  ample. In this setup, a variant \cite[Cor.~5]{KST07} of Miyaoka's
  uniruledness criterion \cite[Cor.~8.6]{Miy85} applies to give the
  uniruledness of $\wtilde Y_\lambda$. For more details on this
  criterion see the survey \cite{KS06}. This ends the proof of
  Proposition~\ref{prop:unirulednessofbaseK0}.
\end{proof}

\begin{cor}\label{cor:boundarynonempty}
  Under the conditions of Proposition~\ref{prop:unirulednessofbaseK0}
  the boundary divisor $D_\lambda$ is not empty. In particular, $D$,
  $\wtilde D_\lambda$ and $\wtilde D$ are not empty.
\end{cor}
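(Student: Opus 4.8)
The plan is to argue by contradiction, using that $\kappa(\widetilde Y,\widetilde D)=0$ together with the uniruledness of $\widetilde Y$ supplied by Proposition~\ref{prop:unirulednessofbaseK0}. So I would assume $D_\lambda=\emptyset$ and show this is incompatible with $\widetilde Y$ being uniruled.

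First I would observe that the emptiness of the boundary propagates up the tower of morphisms: if $D_\lambda=\emptyset$, then $\widetilde D_\lambda=\gamma^*(D_\lambda)=\emptyset$, and hence $\widetilde D=\widetilde\lambda^*(\widetilde D_\lambda)_{\red}=\emptyset$ as well. Then the equality $\kappa(\widetilde Y,\widetilde D)=0$ established in Proposition~\ref{prop:index-cover} reads simply $\kappa(\widetilde Y)=0$, so some positive multiple of $K_{\widetilde Y}$ is linearly equivalent to an effective divisor; in particular $K_{\widetilde Y}$ is pseudo-effective. By \cite[Cor.~0.3]{BDPP03} a smooth projective variety with pseudo-effective canonical bundle is not uniruled, which contradicts Proposition~\ref{prop:unirulednessofbaseK0}. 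Therefore $D_\lambda\neq\emptyset$.

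It then remains to deduce the ``in particular'' clause, and this is where the only care is needed, though it is entirely routine: one must check that non-emptiness of the boundary is inherited in the other direction as well. Since $D_\lambda$ is the cycle-theoretic image of $D$ under $\lambda$, emptiness of $D$ would force $D_\lambda=\emptyset$; hence $D\neq\emptyset$. Because $\gamma$ is finite and surjective, $\gamma^*$ sends the nonzero effective divisor $D_\lambda$ to a nonzero effective divisor, so $\widetilde D_\lambda\neq\emptyset$; and since $\widetilde\lambda$ is birational, the strict transform of $\widetilde D_\lambda$ is a nonzero effective divisor contained in $\widetilde D=\widetilde\lambda^*(\widetilde D_\lambda)_{\red}$, so $\widetilde D\neq\emptyset$. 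I do not expect any genuine obstacle here; the whole argument is a short deduction, the only ``content'' being the combination of the previously proved facts $\kappa(\widetilde Y,\widetilde D)=0$ and the uniruledness of $\widetilde Y$.
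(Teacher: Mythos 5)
Your argument is essentially the paper's: assume $D_\lambda=\emptyset$, deduce $\kappa(\wtilde Y)=0$ from Proposition~\ref{prop:index-cover}, and contradict the uniruledness of $\wtilde Y$ from Proposition~\ref{prop:unirulednessofbaseK0}. The only cosmetic difference is that you invoke \cite{BDPP03} to articulate the contradiction, whereas the paper treats ``$\kappa\geq 0$ and uniruled'' as an immediate absurdity (which it is, by the classical fact that uniruled varieties have no pluricanonical sections); your extra care with the ``in particular'' clause is fine but routine.
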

\begin{proof}
  Again, we assume to the contrary that $D_\lambda$ is empty.
  Proposition~\ref{prop:index-cover} then implies that
  $\kappa({\wtilde Y})=0$, while
  Proposition~\ref{prop:unirulednessofbaseK0} asserts that $\wtilde Y$
  is uniruled, a contradiction.
\end{proof}

\section{Unwinding families}
\label{sec:gluearama}

We will consider projective families $g: Y \to T$ where the
base $T$ itself admits a fibration $\rho: T \to B$ such that $g$ is
isotrivial on all $\rho$-fibers. It is of course generally false that
$g$ would be the pull-back of a family defined over $B$. We will,
however, show in this section that in some situations the family $g$
does become a pull-back after a suitable base change.

We use the following notation for fibered products that appear in our
setup.

\begin{notation}
  Let $T$ be a scheme, $Y$ and $Z$ schemes over $T$ and $h:Y\to Z$ a
  $T$-morphism. If $t\in T$ is any point, let $Y_t$ and $Z_t$ denote
  the fibers of $Y$ and $Z$ over $t$. Furthermore, let $h_t$ denote
  the restriction of $h$ to $Y_t$. More generally, for any $T$-scheme
  $\wtilde T$, let
  $$
  h_{\wtilde T}: \underbrace{Y\times_T {\wtilde T}}_{=: Y_{\wtilde T}}
  \to \underbrace{Z\times_T {\wtilde T}}_{=: Z_{\wtilde T}}
  $$
  denote the pull-back of $h$ to $\wtilde T$. The situation is
  summarized in the following commutative diagram.
  $$
  \xymatrix{
    Y_{\wtilde T} \ar[dr] \ar@/^1.25pc/[rrr] \ar[rr]_{h_{\wtilde T}} & &
    Z_{\wtilde T} \ar[dl] \ar@/^1.25pc/[rrr]|(.17)\hole & Y
    \ar[dr] \ar[rr]_{h}  && Z \ar[dl] \\
     & \wtilde T \ar[rrr] & & & T }
  $$
\end{notation}

The setup of the current section is then formulated as follows.

\begin{assumption}\label{ass:triplemor}
  Throughout the present section, consider a sequence of morphisms
  between algebraic varieties,
  $$
  \xymatrix{ Y \ar[rr]^{g}_{\text{smooth, projective}} && {\ T\ }
    \ar[rr]^{\rho}_{\text{smooth, rel.~dim.=1}} && B, }
  $$
  where $g$ is a smooth projective family and $\rho$ is smooth of
  relative dimension 1, but not necessarily projective.  Assume
  further that for all $b \in B$, there exists a smooth variety $F_b$
  such that for all $t \in T_b$, there exists an isomorphism $Y_t
  \simeq F_b$.
\end{assumption}

\subsection{Relative isomorphisms of families over the same base}

To start, recall the well-known fact that an isotrivial family of
varieties of general type over a curve becomes trivial after passing
to an étale cover of the base. As we are not aware of an adequate
reference, we include a proof here.

\begin{lem}\label{lem:dominant}
  Let $b\in B$ and assume that $\Aut(F_b)$ is finite.  Then the
  natural morphism $\iota: I=\Isom_{T_b}({Y_b}, {T_b}\times F_b)\to
  {T_b}$ is finite and étale. Furthermore, pull-back to $I$ yields an
  isomorphism of $I$-schemes $Y_I \simeq I\times F_b$.
\end{lem}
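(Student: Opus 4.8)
The plan is to use the functorial description of $I = \Isom_{T_b}(Y_b, T_b \times F_b)$ and the rigidity of varieties with finite automorphism group. First I would observe that $I$ represents the functor assigning to a $T_b$-scheme $W$ the set of $W$-isomorphisms $Y_b \times_{T_b} W \xrightarrow{\sim} W \times F_b$; since $g$ is smooth and projective and $F_b$ is projective, this $\Isom$-scheme is representable, of finite type over $T_b$, by standard Hilbert-scheme/Grothendieck arguments (the graph of an isomorphism is a point of a suitable relative Hilbert scheme). By the hypothesis of Assumption~\ref{ass:triplemor}, for every $t \in T_b$ the fiber $Y_t$ is isomorphic to $F_b$, so the fiber $I_t = \Isom(Y_t, F_b)$ is non-empty; moreover, once one isomorphism is fixed, $I_t$ is a torsor under $\Aut(F_b)$, hence a \emph{finite} set of cardinality $|\Aut(F_b)|$, which is constant in $t$. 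This already shows $\iota$ is quasi-finite and surjective.

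Next I would upgrade quasi-finiteness to properness, and then to finiteness: since $F_b$ is of general type (being canonically polarized, as the fibers of $f^\circ$ are), the relative automorphism functor and hence $\Isom$ is unramified over $T_b$, and properness follows from the valuative criterion — an isomorphism over the punctured disc between two families of canonically polarized (hence non-uniruled, minimal) varieties extends over the puncture because both families have unique relative canonical models and the isomorphism on the generic fiber is forced to extend. Alternatively, one invokes that $\Isom$ between polarized families of general type is proper over the base by the separatedness and properness of the relative canonical model construction. Being proper and quasi-finite, $\iota$ is finite; being finite with fibers of constant cardinality onto a smooth (in particular reduced, normal) base $T_b$, and unramified, $\iota$ is finite étale.

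Finally, for the last assertion: by the very definition of the representable functor $I$, the universal object over $I$ is a \emph{tautological} $I$-isomorphism
$$
Y_I = Y_b \times_{T_b} I \;\xrightarrow{\;\sim\;}\; I \times F_b,
$$
obtained by pulling back the identity element of $\Hom(I, I) = \Isom_{T_b}(Y_b \times_{T_b} I,\, I \times F_b)$. This is precisely the claimed isomorphism of $I$-schemes.

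The main obstacle I anticipate is the properness step: one must genuinely use that the fibers $F_b$ are of general type (equivalently, canonically polarized), since for varieties with infinite or badly-behaved automorphisms the $\Isom$-scheme need not be proper and $\iota$ could fail to be finite. The cleanest route is to reduce extension of the isomorphism over a puncture to the uniqueness of relative canonical models, which is exactly where the hypothesis that we are dealing with canonically polarized varieties enters; the finiteness of $\Aut(F_b)$ is then what pins down the degree and, together with smoothness of $T_b$, gives étaleness rather than merely finiteness.
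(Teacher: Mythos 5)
Your proposal has a genuine gap at the properness/finiteness step. Lemma~\ref{lem:dominant} is stated under Assumption~\ref{ass:triplemor} together with the single extra hypothesis that $\Aut(F_b)$ is finite; nothing in the statement says that $F_b$ is canonically polarized or of general type (Assumption~\ref{ass:triplemor} only posits a smooth projective family all of whose fibers over $T_b$ are isomorphic to a fixed smooth $F_b$). Your argument for properness of $\iota$ invokes the uniqueness and properness of relative canonical models over a punctured disc, i.e., the Matsusaka--Mumford-type extension of an isomorphism between families of general type. That tool is simply not available under the stated hypotheses, so the argument proves a weaker statement than the one in the paper (though one that would suffice for the eventual application, where the fibers of $g$ do come from the canonically polarized family $f^\circ$). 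The paper's own proof sidesteps this entirely: it realizes $I$ as an open subscheme of the relative Hilbert scheme $H=\Hilb_{T_b}(Y_b\times F_b)$, notes that the union $H^I$ of the finitely many one-dimensional components of $H$ meeting $I$ is projective and quasi-finite, hence \emph{finite}, over the smooth curve $T_b$, and then compares lengths: $\length(I_t)=|\Aut(F_b)|$ is constant, $\length(H^I_t)$ is constant by flatness over $T_b$, and the two agree for general $t$ because $I$ is open and dense in $H^I$. This forces $I_t=H^I_t$ for every $t$, hence $I=H^I$, which is finite and unramified, hence \'etale, over $T_b$ --- using only the stated hypotheses.

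Two smaller remarks. First, unramifiedness of $\iota$ should not be attributed to $F_b$ being of general type: it follows directly from finiteness of $\Aut(F_b)$, which forces $H^0(F_b,T_{F_b})=0$ and hence trivial relative tangent spaces of $I\to T_b$ (over $\bC$ this also makes each $I_t$ reduced of length $|\Aut(F_b)|$, which is exactly what the length count above uses). Second, your final step --- reading off the isomorphism $Y_I\simeq I\times F_b$ from the tautological object of the functor $I$ represents --- is correct and is essentially what the paper does, just phrased via the natural diagonal section of $\Isom_I(Y_I,I\times F_b)\simeq I\times_{T_b}I$.
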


\begin{proof}
  Consider the ${T_b}$-scheme
  $$
  H := \Hilb_{T_b} \bigl( {Y_b} \times_{T_b} ({T_b} \times F_b) \bigr) \simeq
  \Hilb_{T_b} \bigl( {Y_b} \times F_b \bigr).
  $$
  By Assumption~\ref{ass:triplemor}, $H_t\simeq \Hilb(F_b\times F_b)$
  for all $t\in {T_b}$.  Similarly, $I_t\simeq \Aut(F_b)$, hence $I$
  is one-dimensional and $\length(I_t)$ is constant on ${T_b}$.  Since
  $I$ is open in $H$, the union of components of $H$ that contain $I$,
  denoted by $H^I$, is also one-dimensional.
  
  Recall that $H\to {T_b}$ is projective, so $H^I\to {T_b}$ is also
  projective, hence finite.  Since $H\to {T_b}$ is flat,
  $\length(H_t)$ is constant.  Furthermore, $I\subseteq H^I$ is open,
  so $H^I_t=I_t$ and hence $\length(H_t)=\length(I_t)$ for a general
  $t\in {T_b}$.  However, we observed above that $\length(I_t)$ is
  also constant, so we must have that $\length(H_t)=\length(I_t)$ for
  all $t\in {T_b}$, and since $I\subseteq H^I$, this means that
  $I=H^I$ and $\iota:I\to {T_b}$ is finite and unramified, hence
  étale.

  In order to prove the global triviality of $Y_I$, consider
  $\Isom_I(Y_I, I\times F_b)$. Recall that taking $\Hilb$ and $\Isom$
  commutes with base change, and so we obtain an isomorphism
  $$
  \Isom_I(Y_I, I\times F_b) \simeq I\times_{T_b}
  \Isom_{T_b}({Y_b}, {T_b}\times F_b) \simeq I \times_{T_b} I.
  $$
  This scheme admits a natural section over ${T_b}$, namely its
  diagonal, which induces an $I$-isomorphism between $Y_I$ and $I
  \times F_b$.
\end{proof}

The preceding Lemma~\ref{lem:dominant} can be used to compare two
families whose associated moduli maps agree. We show that in our setup
any two such families become globally isomorphic after changing base.

\begin{lem}\label{lem:relglue}
  In addition to Assumption~\ref{ass:triplemor}, assume that there
  exists another projective morphism, $Z\to T$, with the following
  property: for any $b\in B$ and any $t\in T_b$, we have $Y_t \simeq
  Z_t \simeq F_b$. Then
  \begin{enumerate-p}
  \item \ilabel{lem:54-1} there exists a surjective morphism $\tau:
    {\wtilde T}\to T$ such that the pull-back families of $Y$ and $Z$
    to $\wtilde T$ are isomorphic as $\wtilde T$-schemes, i.e., we have
    a commutative diagram as follows:
    $$
    \hskip 8em
    \xymatrix{ Y_{\wtilde T} \ar[dr] \ar@/^1.25pc/[rrr]
      \ar@{<->}[rr]_{\wtilde T-\text{isom.}}  & & Z_{\wtilde T}
      \ar[dl] \ar@/^1.25pc/[rrr]|(.17)\hole & Y
      \ar[dr]  && Z \ar[dl] \\
      & \wtilde T \ar[rrr]^\tau & & & T \ar[d]^\rho\\
      & & & & B.}
    $$
  \end{enumerate-p}
  Furthermore, if for all $b\in B$, the group $\Aut(F_b)$ is finite,
  then $\wtilde T$ can be chosen such that the following holds. Let
  $\wtilde T' \subset \wtilde T$ be any irreducible component. Then
  \begin{enumerate-p}\addtocounter{enumi}{1}
  \item\ilabel{lem:54-2} $\tau$ is quasi-finite,
  \item\ilabel{lem:54-3} the image set $\tau (\wtilde T')$ is a union
    of $\rho$-fibers, and
  \item\ilabel{lem:54-4} if $\wtilde T'$ dominates $B$, then there
    exists an open subset $B^\circ \subset (\rho\circ\tau)(\wtilde T')$
    such that $\tau\resto {\wtilde T'}$ is finite and étale over $B^\circ$.
    More precisely, if we set $T^\circ := \rho^{-1}(B^\circ)$ and
    $\wtilde T^\circ := \tau^{-1}(T^\circ) \cap \wtilde T'$, then the
    restriction $\tau\resto {\wtilde T^\circ} : \wtilde T^\circ \to T^\circ$
    is finite and étale.
  \end{enumerate-p}
\end{lem}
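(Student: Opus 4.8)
The plan is to build $\wtilde T$ from the relative isomorphism scheme $I' := \Isom_T(Y, Z)$ over $T$, mimicking the construction in the proof of Lemma~\ref{lem:dominant}. First I would form $I' = \Isom_T(Y,Z)$; since $Y \to T$ and $Z \to T$ are projective, $I'$ is a scheme, locally of finite type over $T$, and by Assumption~\ref{ass:triplemor} the fiber $I'_t$ is isomorphic to $\Isom(F_b, F_b) \simeq \Aut(F_b)$ (as a variety, possibly up to torsors — but over the closed points all fibers are isomorphic to $\Aut(F_b)$ as a set) for every $t \in T_b$ and every $b \in B$. The tautological isomorphism over $I'$ gives a $\wtilde T$-isomorphism $Y_{\wtilde T} \simeq Z_{\wtilde T}$ for any $\wtilde T$ mapping to $I'$; so part~\iref{lem:54-1} follows as soon as we produce a surjection $\wtilde T \onto T$ factoring through $I'$. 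For this it suffices to know $I' \to T$ is surjective, which holds because each fiber is a nonempty $\Aut(F_b)$-torsor; we may then let $\wtilde T$ be (a union of components of) $I'$, or its normalization, or a disjoint union of components, chosen so that it still surjects onto $T$.

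For the second half, assume $\Aut(F_b)$ is finite for all $b$. The key point is a length/flatness argument exactly parallel to that in Lemma~\ref{lem:dominant}: since $\rho$ has relative dimension one and $g$ is isotrivial along $\rho$-fibers, $I' \to T$ is quasi-finite (each $I'_t$ is finite, being a torsor under the finite group $\Aut(F_b)$), which already gives~\iref{lem:54-2}. For~\iref{lem:54-3}, observe that over a $\rho$-fiber $T_b$ the scheme $I'|_{T_b} = \Isom_{T_b}(Y_b, Z_b)$ is nonempty — pick $t_0 \in T_b$, an isomorphism $Y_{t_0} \simeq Z_{t_0}$ exists by hypothesis — and the analogue of Lemma~\ref{lem:dominant} applied to the pair $(Y_b, Z_b)$ over the curve $T_b$ (both are isotrivial families of general-type varieties, being fiberwise $\simeq F_b$) shows $I'|_{T_b} \to T_b$ is finite and \'etale, in particular \emph{surjective} onto $T_b$. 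Hence the image of any component $\wtilde T'$ of $I'$ meeting $I'|_{T_b}$ contains all of $T_b$; a standard constructibility/connectedness argument then shows $\tau(\wtilde T')$ is a union of $\rho$-fibers. For~\iref{lem:54-4}: if $\wtilde T'$ dominates $B$, then generic flatness plus the fact that the fiberwise degree $\#\Aut(F_b)$ is constructible in $b$ lets us shrink to an open $B^\circ \subset (\rho\circ\tau)(\wtilde T')$ over which $\tau|_{\wtilde T^\circ}$ is finite (properness over $B^\circ$ comes from the fiberwise finite-\'etale statement along each $T_b$, after possibly shrinking $B^\circ$ further so that $\wtilde T'$ is the \emph{whole} preimage), and \'etale by the length-constancy argument of Lemma~\ref{lem:dominant} applied now relatively over $B^\circ$ rather than over a single point.

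The main obstacle is bookkeeping with components and the passage from the single-curve statement (Lemma~\ref{lem:dominant}) to the relative-over-$B$ statement: one must ensure that after discarding bad components and shrinking $B$, the chosen component $\wtilde T'$ really is the full preimage $\tau^{-1}(T^\circ) \cap \wtilde T'$ of $T^\circ$, so that finiteness (= quasi-finite + proper) is not destroyed, and that \'etaleness — equivalently, constancy of $\length(I'_t)$ along \emph{all} of $T^\circ$, not just along individual $\rho$-fibers — holds; this is where one re-runs the ``$\length(H_t) = \length(I_t)$ for all $t$'' argument of Lemma~\ref{lem:dominant}, using that $\length(I'_t) = \#\Aut(F_{\rho(t)})$ is locally constant on a suitable open set. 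Everything else is a routine transcription of the proof of Lemma~\ref{lem:dominant} with $F_b \times F_b$ replaced by $Y_b \times_{T_b} Z_b$ and $\Aut$ replaced by $\Isom$.
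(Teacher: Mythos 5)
Your proposal is correct and follows essentially the same route as the paper: set $\wtilde T := \Isom_T(Y,Z)$, get the $\wtilde T$-isomorphism from the tautological/diagonal section of $\wtilde T \times_T \wtilde T \simeq \Isom_{\wtilde T}(Y_{\wtilde T}, Z_{\wtilde T})$, and then deduce parts (5.4.2)--(5.4.4) by applying the fiberwise finite-\'etale statement of Lemma~\ref{lem:dominant} along each $T_b$ together with a generic constancy argument for $\#\Aut(F_b)$ to choose $B^\circ$. You are in fact slightly more explicit than the paper's one-line citation of Lemma~\ref{lem:dominant}, in that you note the lemma must be re-run with $Z_b$ in place of the trivial family $T_b\times F_b$; that is indeed what the paper's citation implicitly requires.
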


\begin{subrem}
  In Lemma~\ref{lem:relglue} we do not claim that $\wtilde T$ is
  irreducible or connected.
\end{subrem}

\begin{proof}[Proof of Lemma~\ref{lem:relglue}]
  Set ${\wtilde T} := \Isom_T(Y,Z)$ and let $\tau:\wtilde T\to T$ be the
  natural morphism.  Again, taking $\Isom$ commutes with base change,
  and we have an isomorphism ${\wtilde T}\times_T{\wtilde T} \simeq
  \Isom_{{\wtilde T}}(Y_{{\wtilde T}}, Z_{{\wtilde T}}).  $ Similarly,
  for all $b\in B$, and for all $t\in T_b$, there is a natural
  one-to-one correspondence between ${{\wtilde T}}_t$ and $\Aut(F_b)$.
  In particular, we obtain that $\tau$ is surjective. As before,
  observe that ${{\wtilde T}}\times_T {{\wtilde T}}$ admits a natural
  section, the diagonal. This shows~\iref{lem:54-1}.
  
  If for all $b\in B$, $\Aut(F_b)$ is finite, then the restriction of
  $\tau$ to any $\rho$-fiber, $\tau_b:{{\wtilde T}}_b\to T_b$ is
  finite étale by Lemma~\ref{lem:dominant}.  This shows
  \iref{lem:54-2} and \iref{lem:54-3}.  Furthermore, it implies that
  if $\wtilde T' \subset \wtilde T$ is a component that dominates $B$,
  neither the ramification locus of $\tau\resto {\wtilde T'}$ nor the
  locus where $\tau\resto {\wtilde T'}$ is not finite dominates $B$.
  In fact, if we let $B^\circ$ denote the open set of $B$ where
  $\#\Aut(F_b)$ is constant, then \iref{lem:54-4} holds for $B^\circ$.
\end{proof}

\subsection{Families where $\rho$ has a section}

Now consider Assumption~\ref{ass:triplemor} in case the morphism
$\rho$ admits a section $\sigma: B \to T$ such that $Z = Y_B \times_B
T$. As a corollary to Lemma~\ref{lem:relglue}, we will show that in
this situation $\wtilde T$ always contains a component $\wtilde T'$
such that the pull-back family $Y_{\wtilde T'}$ comes from $B$.
Better still, the restriction $\tau\resto {\wtilde T'}$ is ``relatively
étale'' in the sense that $\tau\resto {\wtilde T'}$ is étale and that $\rho
\circ \tau\resto {\wtilde T'}$ has connected fibers.

\begin{cor}\label{cor:pulling-back-after-rel-etale}
  Under the conditions of Lemma~\ref{lem:relglue} assume that $\rho$
  admits a section $\sigma: B \to T$, and that $Z = Y_B \times_B T$.
  Then there exists an irreducible component $\wtilde T' \subset
  \wtilde T$ such that
  \begin{enumerate}
  \item \ilabel{cor:56-1} $\wtilde T'$ surjects onto $B$, and
  \item \ilabel{cor:56-2} the restricted morphism $\rho \circ
    \tau\resto {\wtilde T'} : \wtilde T' \to B$ has connected fibers.
  \end{enumerate}
\end{cor}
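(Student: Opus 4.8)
The plan is to apply Lemma~\ref{lem:relglue} in the present special situation and then extract the component we want from its conclusion. Since $\rho$ admits a section $\sigma : B \to T$, the fibre over any $b \in B$ contains the point $\sigma(b)$, and by Assumption~\ref{ass:triplemor} we have $Z_t = (Y_B \times_B T)_t \simeq Y_{\sigma(b)} \simeq F_b$ for all $t \in T_b$; likewise $Y_t \simeq F_b$. Thus the hypotheses of Lemma~\ref{lem:relglue} are satisfied with this $Z$, and moreover the $\Isom$-scheme $\wtilde T = \Isom_T(Y, Z)$ comes equipped with a distinguished section over the image $\sigma(B)$: over a point $\sigma(b)$ the fibre $Z_{\sigma(b)} = Y_{\sigma(b)} \times F_b$-data degenerates, so $Y_{\sigma(b)}$ and $Z_{\sigma(b)}$ are literally identified, and the identity isomorphism gives a point of $\wtilde T$ lying over $\sigma(b)$. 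This defines a lift $\wtilde\sigma : B \to \wtilde T$ of $\sigma$, so in particular $(\rho \circ \tau) \circ \wtilde\sigma = \mathrm{id}_B$.

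Next I would let $\wtilde T'$ be any irreducible component of $\wtilde T$ containing the image $\wtilde\sigma(B)$. Since $\wtilde\sigma(B)$ maps onto $B$ under $\rho \circ \tau$, the component $\wtilde T'$ dominates $B$, and being closed in $\wtilde T$ with $\tau$ quasi-finite (part~\iref{lem:54-2} of Lemma~\ref{lem:relglue}), its image $\tau(\wtilde T')$ is a union of $\rho$-fibres (part~\iref{lem:54-3}) which contains $\sigma(B)$, hence meets every $\rho$-fibre; therefore $(\rho\circ\tau)(\wtilde T') = B$, giving~\iref{cor:56-1}.

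For~\iref{cor:56-2} I would argue with the Stein factorization of $\rho \circ \tau\resto{\wtilde T'} : \wtilde T' \to B$, say $\wtilde T' \to B' \to B$ with $B' \to B$ finite and $\wtilde T' \to B'$ having connected fibres. The section $\wtilde\sigma : B \to \wtilde T'$ lifts $\mathrm{id}_B$, hence composing with $\wtilde T' \to B'$ produces a section of $B' \to B$; since $B$ is (geometrically) connected and $B' \to B$ is finite, a section forces the connected component of $B'$ through that section to map isomorphically to $B$, and after replacing $\wtilde T'$ by the union of the fibres of $\wtilde T' \to B'$ over that component—equivalently, by noting that $\wtilde T'$ is irreducible so its whole image in $B'$ is irreducible and thus lands in that single component—we conclude $B' \cong B$ and the fibres of $\rho \circ \tau\resto{\wtilde T'}$ are connected. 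The one point that needs care is making sure $\wtilde T'$ really does land in the section-component of $B'$ rather than splitting across several components; this follows because $\wtilde T'$ is irreducible, so $B'$ is irreducible as the image of an irreducible variety, and an irreducible finite cover of $B$ admitting a section is forced to be $B$ itself.

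The main obstacle is this last connectedness bookkeeping: one must be careful that the component of $\wtilde T$ singled out by the canonical section is irreducible (or pass to an irreducible component of it through $\wtilde\sigma(B)$) and that the section survives restriction to that component, so that the Stein-factorization argument applies cleanly. Everything else is a direct translation of Lemma~\ref{lem:relglue} into the sectioned setting.
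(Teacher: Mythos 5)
Your proposal follows the same route as the paper: use $Z = Y_B \times_B T$ and the section $\sigma$ to produce a canonical lift $\wtilde\sigma : B \to \wtilde T$ of $\sigma$ (from the identification $Y_B \simeq Z_B$), choose an irreducible component $\wtilde T'$ containing $\wtilde\sigma(B)$, and read off surjectivity and connectedness of fibres from the existence of that section. The only caveat in your write-up is the appeal to Stein factorization of $\rho\circ\tau\resto{\wtilde T'}$: since $\rho$ is not assumed proper, the classical Stein factorization is not directly available, and the cleaner way to finish is to note that $\wtilde T'$ is irreducible, so its generic fibre over $B$ is irreducible, and the $k(B)$-rational point supplied by $\wtilde\sigma$ forces that generic fibre to be geometrically connected, whence the fibres over a dense open subset of $B$ are connected---which is all that the subsequent applications (where one is free to shrink $B^\circ$) require.
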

\begin{proof}
  It is clear from the construction that $Y_B \simeq Z_B$. This
  isomorphism corresponds to a morphism $\wtilde\sigma: B\to
  \Isom_T(Y,Z)=\wtilde T$. Let $\wtilde T' \subset \wtilde T$ be an
  irreducible component that contains the image of $\wtilde \sigma$.
  The existence of a section guarantees that $\rho \circ \tau\resto
  {\wtilde T'} : \wtilde T' \to B$ is surjective and its fibers are
  connected.
\end{proof}

One particular setup where a section is known to exist is when $T$ is
a birationally ruled surface over $B$. The following will become
important later.

\begin{cor}\label{cor:family-push-forward}
  In addition to Assumption~\ref{ass:triplemor}, suppose that $B$ is a
  smooth curve and that the general $\rho$-fiber is isomorphic to
  $\P^1$, $\mathbb A^1$ or $(\mathbb A^1)^*= \mathbb A^1\setminus
  \{0\}$. Then there exist non-empty Zariski open sets $B^\circ
  \subset B$, $T^\circ := \rho^{-1}(B^\circ)$ and a commutative
  diagram
  $$
  \xymatrix{%
    {\wtilde T^\circ} \ar@<-1pt>[r]^{\tau}_{\text{étale}}
    \ar@/_1mm/[rd]_{\text{conn.~fibers}} &
    \text{\vphantom{{$\wtilde T^\circ$}}}
    \ \ {T^\circ} \ar[d]^{\rho} \\
    & \text{\phantom{\ }}{B^\circ} }
  $$
  such that 
  \begin{enumerate-p}
  \item the fibers of $\rho\circ\tau$ are again isomorphic to $\P^1$,
    $\mathbb A^1$ or $(\mathbb A^1)^*$, respectively, and
  \item the pull-back family $Y_{\wtilde T^\circ}$ comes from
    $B^\circ$, i.e., there exists a projective family $Z \to B^\circ$
    and a $\wtilde T^\circ$-isomorphism
    $$
    Y_{\wtilde T^\circ} \simeq Z_{\wtilde T^\circ}.
    $$
  \end{enumerate-p}
\end{cor}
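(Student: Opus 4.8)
The plan is to reduce to Corollary~\ref{cor:pulling-back-after-rel-etale} by arranging a section of $\rho$ over a suitable open subset of the curve $B$, after which the earlier result hands us the component $\wtilde T'$, the \'etaleness, and the connectedness of the fibers. First I would shrink $B$ to a non-empty Zariski-open $B^\circ$ over which $\rho$ becomes as nice as possible: since $B^\circ$ is a smooth affine curve and the general $\rho$-fiber is $\P^1$, $\mathbb A^1$ or $(\mathbb A^1)^*$, after removing the finitely many points of $B$ whose fibers are singular or of the wrong type we may assume $\rho: T^\circ \to B^\circ$ is a smooth morphism all of whose fibers are isomorphic to the prescribed curve. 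In each of the three cases $\rho: T^\circ \to B^\circ$ then admits a section: for $\P^1$ and $\mathbb A^1$ this is essentially Tsen's theorem (the generic fiber is a form of $\P^1_{k(B)}$, resp.\ of $\mathbb A^1_{k(B)}$, with a rational point; spread out and shrink $B^\circ$), and for $(\mathbb A^1)^*$ one notes that a smooth $(\mathbb A^1)^*$-bundle over an affine curve, after possibly a further shrinking, is Zariski-locally trivial, hence has a section. In all cases, after shrinking $B^\circ$ once more we obtain a section $\sigma: B^\circ \to T^\circ$.

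With the section in hand, set $Z := Y_{B^\circ} \times_{B^\circ} T^\circ$ (using $\sigma$ to view $Y_{B^\circ}$ as living over $B^\circ$); this is a projective family over $T^\circ$, and by Assumption~\ref{ass:triplemor} its fibre over $t \in T^\circ_b$ is $Y_{\sigma(b)} \simeq F_b \simeq Y_t$. Thus the hypotheses of Lemma~\ref{lem:relglue} and of Corollary~\ref{cor:pulling-back-after-rel-etale} are met for the triple $Y^\circ := Y_{T^\circ} \to T^\circ \to B^\circ$. Corollary~\ref{cor:pulling-back-after-rel-etale} then produces an irreducible component $\wtilde T' \subset \Isom_{T^\circ}(Y^\circ, Z)$ which surjects onto $B^\circ$ and for which $\rho \circ \tau|_{\wtilde T'}: \wtilde T' \to B^\circ$ has connected fibres. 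By part~\iref{lem:54-4} of Lemma~\ref{lem:relglue}, after one further shrinking of $B^\circ$ the restriction $\tau|_{\wtilde T'}: \wtilde T^\circ := \tau^{-1}(T^\circ) \cap \wtilde T' \to T^\circ$ is finite and \'etale, and by construction $Y_{\wtilde T^\circ} \simeq Z_{\wtilde T^\circ}$, which is pulled back from the family $Z \to B^\circ$; this gives conclusion (2).

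For conclusion (1) I would argue that the fibres of $\rho \circ \tau|_{\wtilde T^\circ}$ retain their isomorphism type. The composite $\wtilde T^\circ \xrightarrow{\tau} T^\circ \xrightarrow{\rho} B^\circ$ has connected fibres by Corollary~\ref{cor:pulling-back-after-rel-etale}\iref{cor:56-2}, and over each $b \in B^\circ$ the map $\tau_b: \wtilde T^\circ_b \to T^\circ_b$ is a connected finite \'etale cover of $T^\circ_b$, where $T^\circ_b$ is $\P^1$, $\mathbb A^1$ or $(\mathbb A^1)^*$. A connected finite \'etale cover of $\P^1$ or of $\mathbb A^1$ is trivial (both are simply connected), so $\wtilde T^\circ_b$ is again $\P^1$ resp.\ $\mathbb A^1$; a connected finite \'etale cover of $(\mathbb A^1)^* = \Spec k[s,s^{-1}]$ is of the form $s \mapsto s^d$ and hence again isomorphic to $(\mathbb A^1)^*$. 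Shrinking $B^\circ$ a last time so that the degree of $\tau|_{\wtilde T^\circ}$ is constant along $B^\circ$, we conclude that all fibres of $\rho \circ \tau$ are of the asserted type, which proves (1) and completes the proof.

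The main obstacle I expect is the existence of the section of $\rho$ over an open subset of $B$: one must handle the three fibre types separately, invoke Tsen's theorem (or the triviality of Brauer groups of function fields of curves) in the $\P^1$ and $\mathbb A^1$ cases, and deal with the possible monodromy of an $(\mathbb A^1)^*$-bundle in the last case — in each instance one pays for the section by shrinking $B$, which is harmless here since we only claim the statement over some non-empty $B^\circ$. Everything after that is a straightforward application of the already-established gluing results.
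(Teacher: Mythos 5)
Your proof follows essentially the same route as the paper: shrink $B$, produce a section of $\rho$ over a smaller open set, build the constant comparison family $Z$ from that section, feed the data into Lemma~\ref{lem:relglue} and Corollary~\ref{cor:pulling-back-after-rel-etale}, and finally use that connected finite \'etale covers of $\P^1$, $\bA^1$, $(\bA^1)^*$ have the same type.

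One small caveat on the $(\bA^1)^*$ case. Your appeal to ``Zariski-local triviality'' of a smooth $(\bA^1)^*$-bundle over a curve is not quite right: if $T \to B$ has $(\bA^1)^*$-fibers, its relative compactification $\overline T \to B$ (a $\P^1$-bundle after Tsen and shrinking) may have the boundary $D = \overline T \setminus T$ equal to a connected degree-two multisection that does not split over any Zariski-open of $B$, so $T$ is never Zariski-locally a product with $(\bA^1)^*$. What is true, and what you actually need, is merely that a section of $T \to B$ exists after shrinking $B$; this is how the paper handles all three cases uniformly: take a section $\sigma$ of $\overline T \to B$ by Tsen, use that the family of sections is positive-dimensional to ensure $\sigma(B)\not\subset D$, then remove the finitely many $b$ with $\sigma(b) \in D$. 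Replacing your Zariski-triviality step with this argument (or just observing that $\P^1_{k(B)}$ minus a closed point always has $k(B)$-rational points) makes the proposal match the paper.
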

\begin{subrem}\label{srem:family-push-forward}
  If the general $\rho$-fiber is isomorphic to $\P^1$ or $\mathbb
  A^1$, the morphism $\tau$ is necessarily an isomorphism. Shrinking
  $B^\circ$ further, if necessary, $\rho: T^\circ \to B^\circ$ will
  then even be a trivial $\P^1$-- or $\mathbb A^1$--bundle,
  respectively.
\end{subrem}

\begin{proof}
  Shrinking $B$, if necessary, we may assume that all $\rho$-fibers
  are isomorphic to $\P^1$, $\mathbb A^1$ or $(\mathbb A^1)^*$, and
  hence that $T$ is smooth. Then it is always possible to find a
  relative smooth compactification of $T$, i.e. a smooth $B$-variety
  $\overline T \to B$ and a smooth divisor $D \subset T$ such that
  $\overline T \setminus D$ and $T$ are isomorphic $B$-schemes.
  
  By Tsen's theorem, \cite[p.~73]{Shaf94}, there exists a section
  $\sigma: B \to \overline T$.  In fact, there exists a positive
  dimensional family of sections, so that we may assume without loss
  of generality that $\sigma(B)$ is not contained in $D$.
  
  Let $B^\circ \subset B$ be the open subset such that for all $b \in
  B^\circ$, $\overline T_b \simeq \P^1$, $T_b$ is isomorphic to
  $\P^1$, $\mathbb A^1$ or $(\mathbb A^1)^*$, respectively, and
  $\sigma(b) \not \in D$.  Using that any connected finite étale cover
  of $T_b$ is again isomorphic to $T_b$, and shrinking $B^\circ$
  further, Corollary~\ref{cor:pulling-back-after-rel-etale} yields the
  claim.
\end{proof}

\begin{rem}
  Throughout the article we work over the field of complex numbers
  $\bC$, thus we kept that assumption here as well.  However, we would
  like to note that the results of this section work over an arbitrary
  algebraically closed base field $k$.
\end{rem}

\part{PROOF OF THEOREM~\ref*{thm:mainresult}}

\section{Setup and Notation}

The cases $\kappa(S^\circ)= -\infty$, $0$ and $1$ are considered
separately in Sections~\ref{sec:kinfty}--\ref{sec:k1} below.

The following setup and notation will be used throughout the rest of
the article: As in Theorem~\ref{thm:mainresult}, we fix a smooth
compactification $S^\circ \subset S$ such that $D := S\setminus
S^\circ$ is a divisor with simple normal crossings. The log minimal
model program then yields a birational morphism $\lambda : S \to
S_\lambda$, with the following properties.
\begin{enumerate}
\item The surface $S_\lambda$ is normal and $\mathbb Q$-factorial.
\item If $D_\lambda$ is the cycle-theoretic image of $D$, then
  $(S_\lambda, D_\lambda)$ is a reduced {dlt} pair.
\item By Lemma~\ref{ex:dltsurfisfd} and
  Theorem~\ref{thm:logformextension}, the extension theorem holds for
  $(S_\lambda, D_\lambda)$.
\end{enumerate}
Again, recall from \cite[Thm.~1.4]{VZ02} that there exists a
Viehweg-Zuo sheaf $\sA \subset \Sym^n \Omega^1_S(\log D)$ of positive
Kodaira-Iitaka dimension $\kappa(\sA) \geq \Var(f^\circ) > 0$.  By
Lemma~\ref{lem:pushdownA}, there exists a Viehweg-Zuo sheaf
$\sA_\lambda \subset \Sym^{[n]} \Omega^1_{S_\lambda}(\log D_\lambda)$
of Kodaira-Iitaka dimension $\kappa(\sA_\lambda) \geq \kappa(\sA)$.

\section{Proof in case \texorpdfstring{$\kappa(S^\circ)=-\infty$}{Kodaira dimension minus infinity}}
\label{sec:kinfty}

In this case, the log canonical bundle $K_S +D$ has negative
Kodaira-Iitaka dimension, and $(S_\lambda, D_\lambda)$ is a pair that
either has the structure of a Mori-Fano fiber space or is a log-Fano
pair with Picard number $\rho(S_\lambda)=1$. However, since the
extension theorem holds, Theorem~\ref{thm:b2thm} rules out the case
that $\rho(S_\lambda)=1$. The pair $(S_\lambda, D_\lambda)$ thus
always admits a fibration, independently of the choices made in its
construction.  In particular, there exists a smooth curve $C$ and a
fibration $\pi_\lambda: S_\lambda \to C$ with connected fibers, such
that $-(K_{S_\lambda} + D_\lambda)$ intersects the general fiber
positively.

Setting $\pi := \pi_\lambda \circ \lambda$, the general fiber $F$ of
$\pi$ is then a rational curve that intersects the boundary in one
point, if at all.  In particular, the restriction of the family
$f^\circ$ to $F \cap S^\circ$ is necessarily isotrivial by
\cite{Kovacs00a}. The detailed description of the moduli map in case
$\kappa(S^\circ)=-\infty$ then follows from
Corollary~\ref{cor:family-push-forward} and
Remark~\ref{srem:family-push-forward}. \qed

\section{Proof that \texorpdfstring{$\kappa(S^\circ) \not = 0$}{Kodaira dimension is not zero}}
\label{sec:k0}

\subsection{Setup}

To prove Theorem~\ref{thm:mainresult} in this case, we argue by
contradiction and assume that $\kappa(S^\circ) = 0$. Let $(\wtilde
S_\lambda, \wtilde D_\lambda)$ be the index-one cover of a log-minimal
model, as in Proposition~\ref{prop:index-cover}.  The main properties
of $(\wtilde S_\lambda, \wtilde D_\lambda)$ are summarized as follows.
\CounterStep
\begin{enumerate-p}
\item The pair $(\wtilde S_\lambda, \wtilde D_\lambda)$ is $\mathbb
  Q$-factorial and {dlt} (Proposition~\ref{prop:index-cover} and
  Corollary~\ref{cor:singofcoverdim2}).
\item\ilabel{il:812} There exists a Viehweg-Zuo sheaf $\wtilde
  \sA_\lambda \subset \Sym^{[n]} \Omega^1_{\wtilde S_\lambda}(\log
  \wtilde D_\lambda)$ of positive Kodaira-Iitaka dimension
  (Lemma~\ref{lem:VZontilde}).
\item\ilabel{il:813} If $\wtilde S_{\lambda,\reg} \subset \wtilde
  S_\lambda$ is the maximal smooth open subset, then the restriction
  $\wtilde \sA_\lambda\resto{\wtilde S_{\lambda,\reg}}$ is invertible
  (\cite[1.1.15 on p.~154]{OSS}).
\item $\wtilde S_\lambda$ is uniruled, and the boundary $\wtilde
  D_\lambda$ is not empty (Proposition~\ref{prop:unirulednessofbaseK0}
  and Corollary~\ref{cor:boundarynonempty}).
\item The divisor $K_{\wtilde S_\lambda} + \wtilde D_\lambda$ is
  Cartier and trivial (Proposition~\ref{prop:index-cover}).
\item\ilabel{il:815} If $p \in \wtilde D_\lambda$ is any point, then
  $(\wtilde S_\lambda, \wtilde D_\lambda)$ is log-smooth at $p$. In
  particular, $\Omega^1_{\wtilde S_\lambda}(\log \wtilde D_\lambda)$
  is locally free along $\wtilde D_\lambda$
  (Corollary~\ref{cor:singofcoverdim2}).
\end{enumerate-p}

\subsection{Outline of the proof}

As a first step in the proof of Theorem~\ref{thm:mainresult}, we aim
to apply Theorem~\ref{thm:b2thm}, in order to show that $\wtilde
S_\lambda$ is fibered over a curve, with rational fibers that
intersect the boundary twice. Since Theorem~\ref{thm:b2thm} works best
in the case $\kappa = -\infty$ we need to decrease the boundary
coefficients slightly and perform extra contractions before
Theorem~\ref{thm:b2thm} can be applied to prove the existence of a
fibration.

The fiber space structure of $\wtilde S_\lambda$ is then used to
analyze the restriction of the Viehweg-Zuo sheaf $\wtilde \sA_\lambda$
to a suitable boundary component $\wtilde D'_\lambda \subset \wtilde
D_\lambda$.  Even though there is no smooth family over $\wtilde
D'_\lambda$, it will turn out that the restriction $\wtilde
\sA_\lambda\resto{\wtilde D'_\lambda}$ can be interpreted as a
Viehweg-Zuo sheaf on $\wtilde D'_\lambda$, which again has positive
Kodaira-Iitaka dimension. This leads to contradiction and thus
finishes the proof.

\subsection{Minimal models of $\boldsymbol{(\wtilde S_\lambda, \wtilde D_\lambda)}$}

Since $\wtilde D_\lambda$ is not empty and $K_{\wtilde S_\lambda}
\equiv -\wtilde D_\lambda$, it follows that for any rational number $0
< \varepsilon < 1$,
\begin{equation*}
  \kappa\bigl({\wtilde S_\lambda},(1-\varepsilon)\wtilde D_\lambda \bigr) =
  \kappa\bigl(K_{\wtilde S_\lambda}+(1-\varepsilon)\wtilde D_\lambda \bigr) =
  \kappa\bigl(\varepsilon \cdot K_{\wtilde S_\lambda} \bigr) =
  \kappa\bigl({\wtilde S_\lambda} \bigr) = -\infty.
\end{equation*}
Choose a rational number $0 < \varepsilon < 1$ and perform a minimal
model program for the pair $\bigl(\wtilde S_\lambda,
(1-\varepsilon)\wtilde D_\lambda\bigr)$. This will produce a
birational morphism $\mu: \wtilde S_\lambda \to S_\mu$.  Let $D_\mu$
be the cycle-theoretic image of $\wtilde D_\lambda$.  Since
$\bigl(\wtilde S_\lambda, (1-\varepsilon) \wtilde D_\lambda \bigr)$
has {dlt} singularities, the pair $\bigl( S_\mu, (1-\varepsilon)
\wtilde D_\mu \bigr)$ will also be {dlt}, in fact, it will be {klt}.

\begin{rem}\label{rem:Mori-Fano}
  Since $\kappa({\wtilde S_\lambda}, (1-\varepsilon) \wtilde
  D_\lambda) =-\infty$, either $\rho(S_\mu)>1$ and the pair $\bigl(
  S_\mu, (1-\varepsilon) D_\mu\bigr)$ is a Mori-Fano fiber space, or
  $\rho(S_\mu)=1$.
\end{rem}

\begin{rem}\label{rem:smuklt}
  It follows from the equation $K_{\wtilde S_\lambda} \equiv -\wtilde
  D_\lambda$ that for any $0 < \varepsilon', \varepsilon'' < 1$, the
  divisors $K_{\wtilde S_\lambda} + (1-\varepsilon') \wtilde
  D_\lambda$ and $K_{\wtilde S_\lambda} + (1-\varepsilon'') \wtilde
  D_\lambda$ are multiples of one another.  In particular, the
  birational morphism $\mu$ is a minimal model program for the pair
  $\bigl(\wtilde S_\lambda, (1-\varepsilon) \wtilde D_\lambda \bigr)$,
  independently of the chosen $0 < \varepsilon < 1$. It follows that
  $\bigl(S_\mu, (1-\varepsilon)D_\mu \bigr)$ has {dlt} singularities
  for all $\varepsilon$. In particular, it follows directly from the
  definition of discrepancy
  \PreprintAndPublication{\cite[2.26]{KM98}}{} that the reduced pair
  $(S_\mu, D_\mu)$ is {dlc} in the sense of Definition~\ref{def:dlc}.
\end{rem}

\subsection{The fiber space structure of $\boldsymbol{\wtilde S_\lambda}$}

We apply Theorem~\ref{thm:b2thm} in order to show that $S_\mu$ is a
fiber space.

\begin{prop}\label{prop:mfs}
  One has that $\rho(S_\mu) > 1$.  In particular, $S_\mu$ has the
  structure of a non-trivial Mori-Fano fiber space.
\end{prop}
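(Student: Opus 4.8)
The plan is to apply Theorem~\ref{thm:b2thm} to the pair $\bigl(S_\mu, (1-\varepsilon)D_\mu\bigr)$, or rather to a reduced pair built from it, and derive a contradiction from the assumption $\rho(S_\mu)=1$. First I would verify the three hypotheses of Theorem~\ref{thm:b2thm}. $\mathbb Q$-factoriality of $S_\mu$ is automatic from the minimal model program. For the nef anti-log-canonical condition, recall from Remark~\ref{rem:Mori-Fano} that $\kappa\bigl(\wtilde S_\lambda,(1-\varepsilon)\wtilde D_\lambda\bigr)=-\infty$, so after the MMP the pair $\bigl(S_\mu,(1-\varepsilon)D_\mu\bigr)$ is either a Mori-Fano fiber space or has $\rho(S_\mu)=1$; in the latter case the (anti-)log-canonical class spans the one-dimensional N\'eron-Severi space, and since $\kappa=-\infty$ it must be the anti-nef direction that is effective-free, giving that $-(K_{S_\mu}+(1-\varepsilon)D_\mu)$ is nef (indeed ample). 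Using $K_{\wtilde S_\lambda}\equiv -\wtilde D_\lambda$ and pushing forward, one sees $-(K_{S_\mu}+D_\mu)$ is then also nef, which is the hypothesis~\iref{il:c} for the reduced pair $(S_\mu,D_\mu)$.

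Next I would supply the remaining two hypotheses: the extension theorem and a Viehweg-Zuo sheaf of positive Kodaira-Iitaka dimension on $(S_\mu,D_\mu)$. For the extension theorem, Remark~\ref{rem:smuklt} records that $(S_\mu,D_\mu)$ is {dlc}, so Example~\ref{ex:dltext} (i.e. Lemma~\ref{ex:dltsurfisfd} together with Theorem~\ref{thm:logformextension}) gives that the extension theorem holds for $(S_\mu,D_\mu)$. For the Viehweg-Zuo sheaf, start from $\wtilde\sA_\lambda\subset\Sym^{[n]}\Omega^1_{\wtilde S_\lambda}(\log\wtilde D_\lambda)$ of positive Kodaira-Iitaka dimension from~\iref{il:812}, and push it forward along $\mu$. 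Since $\mu^{-1}$ contracts no divisor and $S_\mu$ is normal, Lemma~\ref{lem:pushdownA} applies and produces a Viehweg-Zuo sheaf $\sA_\mu\subset\Sym^{[n]}\Omega^1_{S_\mu}(\log D_\mu)$ with $\kappa(\sA_\mu)\ge\kappa(\wtilde\sA_\lambda)>0$.

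With all three hypotheses in place, Theorem~\ref{thm:b2thm} applied to $(S_\mu,D_\mu)$ yields $\rho(S_\mu)>1$, contradicting the assumption $\rho(S_\mu)=1$. Hence $\rho(S_\mu)>1$, and by Remark~\ref{rem:Mori-Fano} the pair $\bigl(S_\mu,(1-\varepsilon)D_\mu\bigr)$ is a Mori-Fano fiber space; that it is non-trivial follows because a Mori-Fano fiber space over a point would mean $\rho=1$. The one point I expect to require care is the passage from ``$-(K_{S_\mu}+(1-\varepsilon)D_\mu)$ is nef/ample'' to ``$-(K_{S_\mu}+D_\mu)$ is nef'': one must check that shrinking the boundary coefficient from $1-\varepsilon$ to $1$ does not destroy nefness of the negative of the log-canonical class. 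This is exactly where Remark~\ref{rem:smuklt} is used --- the divisors $K_{S_\mu}+(1-\varepsilon')D_\mu$ for varying $\varepsilon'\in(0,1)$ are positive multiples of each other (being pushforwards of the corresponding multiples on $\wtilde S_\lambda$, where $K_{\wtilde S_\lambda}\equiv-\wtilde D_\lambda$), so their negatives are simultaneously nef; taking $\varepsilon'\to 0$ and using that the nef cone is closed gives nefness of $-(K_{S_\mu}+D_\mu)$ as well. This is the main obstacle, and it is resolved precisely by the numerical proportionality coming from the index-one cover.
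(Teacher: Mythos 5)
Your proposal is correct and follows the same route as the paper: decrease the boundary coefficient, run the MMP, use Remark~\ref{rem:smuklt} together with Lemma~\ref{ex:dltsurfisfd} and Theorem~\ref{thm:logformextension} to get the extension theorem for $(S_\mu,D_\mu)$, use Lemma~\ref{lem:pushdownA} to transport the Viehweg-Zuo sheaf, and apply Theorem~\ref{thm:b2thm}. The one place where your write-up is more elaborate than necessary is the nefness of $-(K_{S_\mu}+D_\mu)$: since $K_{\wtilde S_\lambda}+\wtilde D_\lambda\equiv 0$ and numerical triviality is preserved under pushforward along the birational contraction $\mu$, one has $K_{S_\mu}+D_\mu\equiv 0$ outright, which is nef without any limit argument; that said, your reasoning does reach the right conclusion, and correctly identifies this passage as the technical point that Remark~\ref{rem:smuklt} and the index-one construction are set up to handle.
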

\begin{proof}
  If $-(K_{S_\mu}+D_\mu)$ is not ample, then $\rho(S_\mu) > 1$ and the
  statement follows from Remark~\ref{rem:Mori-Fano}.  If
  $-(K_{S_\mu}+D_\mu)$ is ample, then Remark~\ref{rem:smuklt} and
  Example~\ref{ex:dltsurfisfd} imply that $(S_\mu, D_\mu)$ is finitely
  dominated by smooth analytic pairs.  Then by
  Theorem~\ref{thm:logformextension}, the extension theorem holds for
  $(S_\mu, D_\mu)$.  According to Lemma~\ref{lem:pushdownA} there
  exists a Viehweg-Zuo subsheaf $\sA_\mu \subset \Sym^{[n]}
  \Omega^1_{S_\mu}(\log D_\mu)$ of positive Kodaira-Iitaka dimension
  and then Theorem~\ref{thm:b2thm} and Remark~\ref{rem:Mori-Fano}
  imply the desired statement.
\end{proof}

\begin{cor}\label{cor:descofF}
  There exists a morphism $\pi : \wtilde S_\lambda \to C$ to a smooth
  curve, and an open set $C^\circ \subset C$ such that for any $c \in
  C^\circ$, the associated fiber $F_c := \pi^{-1}(c)$ is a smooth
  rational curve which is entirely contained in the log-smooth locus
  $(\wtilde S_\lambda, \wtilde D_\lambda)_{\reg}$ and which intersects
  the boundary $\wtilde D_\lambda$ transversally in exactly two points.
  In particular, the sheaf $\Omega^1_{\wtilde S_\lambda}(\log \wtilde
  D_\lambda)\resto{F_c}$ is trivial.
\end{cor}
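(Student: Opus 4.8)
The plan is to combine the Mori-Fano fiber space structure produced by Proposition~\ref{prop:mfs} with the triviality of the log-canonical bundle $K_{\wtilde S_\lambda}+\wtilde D_\lambda$ recorded in~\iref{il:815}. First I would take the Mori-Fano fibration $S_\mu \to C$ supplied by Proposition~\ref{prop:mfs}; its general fiber is a smooth rational curve $F_\mu$ with $-(K_{S_\mu}+(1-\varepsilon)D_\mu)\cdot F_\mu > 0$. Composing with the birational morphism $\mu: \wtilde S_\lambda \to S_\mu$ and passing to the Stein factorization yields $\pi: \wtilde S_\lambda \to C$ with connected fibers, and for general $c$ the fiber $F_c$ is mapped isomorphically to $F_\mu$, so $F_c$ is a smooth rational curve that avoids the finitely many points over which $\mu$ is not an isomorphism and the finitely many points where $(\wtilde S_\lambda, \wtilde D_\lambda)$ fails to be log-smooth (by~\iref{il:815} these lie off $\wtilde D_\lambda$, so in any case a general $F_c$ meets $\wtilde D_\lambda$ only in the log-smooth locus). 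Thus $F_c \subset (\wtilde S_\lambda, \wtilde D_\lambda)_{\reg}$ for $c$ in a suitable open set $C^\circ$.

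The next step is to pin down the intersection number $\wtilde D_\lambda \cdot F_c$. Since $K_{\wtilde S_\lambda}+\wtilde D_\lambda$ is Cartier and trivial, we have $(K_{\wtilde S_\lambda}+\wtilde D_\lambda)\cdot F_c = 0$, hence $\wtilde D_\lambda \cdot F_c = -K_{\wtilde S_\lambda}\cdot F_c = 2$ by adjunction on the smooth rational curve $F_c$ (which has $K_{F_c} = K_{\wtilde S_\lambda}\resto{F_c}$ because $F_c$ lies in the smooth locus and is a fiber, so $F_c^2 = 0$). It remains to rule out the degenerate possibilities that $\wtilde D_\lambda$ meets $F_c$ in a single point with multiplicity two, or is tangent to $F_c$, or contains $F_c$. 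That $F_c \not\subset \wtilde D_\lambda$ is clear because a general fiber is not contained in a fixed divisor. For the transversality and the count of two \emph{distinct} points: since $F_c$ lies in the log-smooth locus, $\wtilde D_\lambda$ is a simple normal crossing divisor in a neighborhood of $F_c$; a general member of the moving family $\{F_c\}$ meets each component of $\wtilde D_\lambda$ transversally and misses the (finitely many) nodes of $\wtilde D_\lambda$ as well as any point where $\wtilde D_\lambda$ is tangent to the fibration, by generic smoothness/Bertini applied to the base-point-free pencil defining $\pi$ restricted near $\wtilde D_\lambda$. Hence after shrinking $C^\circ$, $F_c$ meets $\wtilde D_\lambda$ transversally in exactly two distinct points.

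Finally, the triviality of $\Omega^1_{\wtilde S_\lambda}(\log \wtilde D_\lambda)\resto{F_c}$: along $F_c$ this sheaf is locally free of rank $2$ (by~\iref{il:815}, since $F_c$ is in the log-smooth locus), and there is the residue exact sequence
$$
0 \longrightarrow \Omega^1_{\wtilde S_\lambda}\resto{F_c} \longrightarrow \Omega^1_{\wtilde S_\lambda}(\log \wtilde D_\lambda)\resto{F_c} \longrightarrow \O_{\wtilde D_\lambda \cap F_c} \longrightarrow 0.
$$
Restricting the conormal sequence $0 \to N^*_{F_c} \to \Omega^1_{\wtilde S_\lambda}\resto{F_c} \to \Omega^1_{F_c} \to 0$ and using $N^*_{F_c} = \O_{F_c}$ (fiber, so normal bundle trivial) and $\Omega^1_{F_c} = \O_{\P^1}(-2)$, one gets $\Omega^1_{\wtilde S_\lambda}\resto{F_c}$ is an extension of $\O(-2)$ by $\O$; then $\Omega^1_{\wtilde S_\lambda}(\log \wtilde D_\lambda)\resto{F_c}$ has degree $-2+2 = 0$ and sits in the displayed sequence, so it is an extension whose sub $\Omega^1_{\wtilde S_\lambda}\resto{F_c}$ maps onto $\O(-2)$; chasing the two sequences one sees the sub-line-bundle $\O_{F_c}$ survives and the quotient becomes trivial, giving $\Omega^1_{\wtilde S_\lambda}(\log \wtilde D_\lambda)\resto{F_c} \cong \O_{F_c}^{\oplus 2}$. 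The main obstacle I anticipate is the transversality-and-distinctness step: one must argue carefully that the general fiber of $\pi$ behaves generically with respect to the fixed divisor $\wtilde D_\lambda$ near the (possibly non-general) locus $\wtilde D_\lambda$ itself — this is where the log-smoothness along $\wtilde D_\lambda$ from~\iref{il:815} is essential, as it guarantees there are no hidden singular or tangency phenomena forced onto every fiber.
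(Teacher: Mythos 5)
Your proposal is correct and follows essentially the same route as the paper: the fibration $\pi$ and rational fibers come from Proposition~\ref{prop:mfs}, the intersection count from $K_{\wtilde S_\lambda}+\wtilde D_\lambda \equiv 0$ together with adjunction, and the triviality of $\Omega^1_{\wtilde S_\lambda}(\log \wtilde D_\lambda)\resto{F_c}$ from the standard residue/conormal (or, equivalently, the fiber-restriction) sequences for log-differentials. Your additional care with generic smoothness to get two \emph{distinct} transversal intersection points is a legitimate detail that the paper's one-line proof leaves implicit, but it does not change the method.
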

\begin{proof}
  The existence of $\pi$ and the rationality of the general fiber
  follows from Proposition~\ref{prop:mfs}. The number of intersection
  points follows from $K_{\wtilde S_\lambda} + \wtilde D_\lambda
  \equiv 0$.\PreprintAndPublication{ The triviality of
    $\Omega^1_{\wtilde S_\lambda}(\log \wtilde D_\lambda)\resto{F_c}$
    follows from standard sequences, see \cite[2.14]{KK05} and
    \eqref{eq:restoflogomega2} below.}{}
\end{proof}

\begin{cor}\label{K0pf:cor-3}
  If $c \in C^\circ$ is a general point, then the restriction $\wtilde
  \sA_\lambda\resto{F_c}$ is trivial.
\end{cor}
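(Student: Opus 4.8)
The plan is to restrict the Viehweg--Zuo sheaf $\wtilde\sA_\lambda$ to a general fiber $F_c$ and argue that, as a sub-line-bundle of the trivial bundle $\Sym^n\Omega^1_{\wtilde S_\lambda}(\log\wtilde D_\lambda)\resto{F_c}$ (trivial by Corollary~\ref{cor:descofF}), it can only be a sub-line-bundle of non-positive degree; combined with the fact that $\wt\sA_\lambda$ has positive Kodaira--Iitaka dimension, this will force the restriction to have degree exactly $0$ and hence, on $\P^1$, to be trivial.

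First I would note that, since $c \in C^\circ$ is general, $F_c$ lies in the log-smooth locus where, by~\iref{il:813}, the sheaf $\wt\sA_\lambda$ is invertible; so $\wt\sA_\lambda\resto{F_c}$ is a genuine line bundle on $F_c \simeq \P^1$, determined by its degree. Next, because $F_c$ moves in a family covering $\wtilde S_\lambda$ (the general fiber of $\pi$), a standard argument shows that a subsheaf of positive Kodaira--Iitaka dimension must restrict to a subsheaf of non-negative degree on the general member: if $\kappa(\wt\sA_\lambda) > 0$ then $h^0(\wtilde S_\lambda, \wt\sA_\lambda^{[m]})$ grows, and these sections restrict nontrivially to $F_c$, so $\deg\bigl(\wt\sA_\lambda^{[m]}\resto{F_c}\bigr) \geq 0$, whence $\deg\bigl(\wt\sA_\lambda\resto{F_c}\bigr) \geq 0$. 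On the other hand, the inclusion $\wt\sA_\lambda\resto{F_c} \hookrightarrow \Sym^n\Omega^1_{\wtilde S_\lambda}(\log\wtilde D_\lambda)\resto{F_c} \simeq \O_{\P^1}^{\oplus N}$ exhibits $\wt\sA_\lambda\resto{F_c}$ as a sub-line-bundle of a trivial bundle, which necessarily has non-positive degree. Combining the two inequalities gives $\deg\bigl(\wt\sA_\lambda\resto{F_c}\bigr) = 0$, and a degree-zero line bundle on $\P^1$ is trivial.

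The one point that needs a little care is the claim that the restriction map $H^0(\wtilde S_\lambda, \wt\sA_\lambda^{[m]}) \to H^0(F_c, \wt\sA_\lambda^{[m]}\resto{F_c})$ is nonzero for general $c$: this follows because, $C$ being a curve, the base locus of $|\wt\sA_\lambda^{[m]}|$ (for $m$ large enough that the linear system has positive-dimensional image) cannot contain a general fiber $F_c$ — indeed if every section vanished on every general fiber, the system would factor through $\pi$ and have image of dimension at most $1$, which is still compatible with $\kappa > 0$, so one must instead argue that the image of $\phi_{\wt\sA_\lambda^{[m]}}$ restricted to $F_c$ is a point precisely when $\wt\sA_\lambda^{[m]}\resto{F_c}$ is trivial or anti-effective; either way $\deg \geq 0$ forces triviality. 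Cleanest is simply: $\wt\sA_\lambda\resto{F_c}$ is a subsheaf of a trivial bundle so $\deg \leq 0$, and it has a nonzero global section after some twist so $\deg \geq 0$ — the existence of the section follows from $\kappa(\wt\sA_\lambda) > 0$ together with the generality of $F_c$ via the standard fact that an effective (pluri-log) form restricts nontrivially to a general fiber unless it is pulled back from $C$, a possibility excluded because $\kappa(\wt\sA_\lambda) > 0 = \dim C - (\dim C - 1)$ would then be bounded by... I would phrase this as: were $\wt\sA_\lambda\resto{F_c}$ not trivial, it would have strictly negative degree (being a nontrivial subsheaf of a trivial bundle on $\P^1$), hence so would all its powers, contradicting $\kappa(\wt\sA_\lambda) > 0$ which guarantees sections of $\wt\sA_\lambda^{[m]}$ that restrict nonzero to the general $F_c$.

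I expect the main obstacle to be precisely pinning down that last nonvanishing of the restriction map — i.e. ruling out that all pluri-sections are pulled back from $C$ — cleanly enough to conclude $\deg\bigl(\wt\sA_\lambda\resto{F_c}\bigr)\geq 0$; everything else (triviality of $\Sym^n\Omega^1_{\wtilde S_\lambda}(\log\wtilde D_\lambda)\resto{F_c}$, the sub-line-bundle-of-trivial-bundle bound, and $\deg 0 \Rightarrow$ trivial on $\P^1$) is routine. I would handle it by observing that $\wt\sA_\lambda$ has positive Kodaira--Iitaka dimension and $C$ is a curve, so the Iitaka fibration of $\wt\sA_\lambda$ cannot factor through $\pi$ unless $\kappa(\wt\sA_\lambda)=1$ and the fibration coincides with $\pi$ — but in that case the general fiber $F_c$ is still not in the base locus of the relevant linear system, so sections restrict nonzero to it regardless.
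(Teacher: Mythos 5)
Your approach is the same as the paper's: restrict to the general fiber $F_c$, note the restriction is a sub-line-bundle of the trivial bundle $\Sym^{rn}\Omega^1_{\wtilde S_\lambda}(\log\wtilde D_\lambda)\resto{F_c}$ and hence has non-positive degree, establish that it also has non-negative degree using a nonvanishing section, and conclude triviality on $\bP^1$. So the strategy is right, but you tangle yourself up on the one point that is actually the easiest: showing that sections of $\wtilde\sA_\lambda^{[r]}$ restrict to nonzero sections on the general $F_c$.

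The agonizing about Iitaka fibrations factoring through $\pi$ and whether sections could be ``pulled back from $C$'' is a red herring. Pick any $r$ with $h^0\bigl(\wtilde S_\lambda,\,\wtilde\sA_\lambda^{[r]}\bigr) > 0$ and any nonzero section $\sigma$. Its zero divisor is a proper divisor in $\wtilde S_\lambda$, hence cannot contain $F_c$ for general $c$ (the fibers sweep out the surface). So $\sigma\resto{F_c}$ is a nonzero section of $\wtilde\sA_\lambda^{[r]}\resto{F_c}$, and this alone forces $\deg\bigl(\wtilde\sA_\lambda^{[r]}\resto{F_c}\bigr)\geq 0$. In particular, the scenario you worry about --- all sections pulled back from $C$ --- is harmless: a pullback section from $C$ is a nonzero constant on the general fiber, which is exactly what is needed. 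The paper phrases this by noting that $F_c$ passes through a general point of $\wtilde D_\lambda$, but the substance is simply that a nonzero section on a surface has a proper zero locus and therefore does not vanish identically on a general moving curve. Combined with the degree bound from the trivial bundle and the fact that a degree-zero line bundle on $\bP^1$ with a nonzero section is trivial, the proof closes. Your write-up should simply make this observation and delete the speculative digression; as it stands, that passage is confused enough that it reads as a gap even though the underlying fact is elementary and you do eventually state the right thing.
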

\begin{proof}
  Since $F_c$ is a general fiber, $\wtilde
  \sA_\lambda^{[r]}\resto{F_c}$ is an invertible sheaf for any
  $r\in\bZ$ by \iref{il:813}. In particular,
  $\wt\sA_\lambda^{[r]}\resto{F_c} \simeq
  \bigl(\wt\sA_\lambda\resto{F_c}\bigr)^{\otimes r}$. Fix an $r \in
  \mathbb N$ such that $h^0\bigl(\wtilde S_\lambda,\, \wtilde
  \sA_\lambda^{[r]}\bigr) > 0$.  Then there exists a non-trivial and
  hence injective morphism
  $$
  \wtilde \sA_\lambda^{[r]}\resto{F_c} \to \bigl(\Sym^{[r\cdot n]}
  \Omega^1_{\wtilde S_\lambda}(\log \wtilde D_\lambda)
  \bigr)\resto{F_c} \simeq \Sym^{r \cdot n} \bigl( \Omega^1_{\wtilde
    S_\lambda}(\log \wtilde D_\lambda)\resto{F_c} \bigr).
  $$
  The triviality of the sheaf $\Omega^1_{\wtilde S_\lambda}(\log
  \wtilde D_\lambda)\resto{F_c}$ implies that $\deg \left(\wtilde
    \sA_\lambda^{[r]}\resto{F_c}\right)\leq 0$. Since $F_c$ passes
  through a general point of $\wtilde D_\lambda$, a general section of
  the sheaf $\wtilde \sA_\lambda^{[r]}$ does not vanish along all of
  $F_c$.  Therefore $\wtilde \sA_\lambda^{[r]}\resto{F_c}$ is a line
  bundle of non-positive degree that has a global section.
  Consequently it is trivial. Since $F_c\simeq \bP^1$, this implies
  the statement.
\end{proof}

\subsection{Non-triviality of $\boldsymbol{\wtilde {\scr{A}}_\lambda\resto{D}}$} 

Now consider a section $\sigma \in H^0\bigl(\wtilde S_\lambda,\, \wtilde
\sA_\lambda^{[r]}\bigr)$, let $F_c$ be a general $\pi$-fiber and $y \in
F_c$  a general point of $F_c$. The triviality of $\wtilde
\sA_\lambda^{[r]}$ on $F_c$ can now be used to compare the value of
$\sigma$ at a $y$ with its value at a point where $F_c$ hits the boundary
$\wtilde D_\lambda$. It will follow that $\sigma$ is completely determined
by the values it takes on the boundary.

\begin{lem}\label{lem:restofposkod}
  There exists an irreducible component $\wtilde D_\lambda' \subset
  \wtilde D_\lambda$ such that for any $r \in \mathbb N$, the natural
  restriction morphism
  $$
  H^0\bigl(\wtilde S_\lambda,\, \wtilde \sA_\lambda^{[r]}\bigr) \to
  H^0\bigl(\wtilde D'_\lambda,\, \wtilde \sA_\lambda^{[r]}\resto{\wtilde
    D'_\lambda}\bigr)
  $$
  is injective. In particular, the restriction $\wtilde
  \sA_\lambda\resto{\wtilde D'_\lambda}$ is a non-trivial invertible
  sheaf and its Kodaira-Iitaka dimension equals the Kodaira-Iitaka
  dimension of $\wtilde \sA_\lambda$, i.e., $\kappa(\wtilde
  \sA_\lambda) = \kappa\bigl(\wtilde \sA_\lambda\resto{\wtilde
    D'_\lambda}\bigr)$.
\end{lem}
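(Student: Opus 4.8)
The plan is to exploit the fiber space structure from Corollary~\ref{cor:descofF}: the curve $C$ parametrizes a family of rational curves $F_c$, and by Corollary~\ref{K0pf:cor-3} the sheaf $\wtilde \sA_\lambda^{[r]}$ is trivial on a general such $F_c$. Since $\wtilde D_\lambda$ is not empty and $K_{\wtilde S_\lambda} + \wtilde D_\lambda \equiv 0$, each general fiber meets the boundary in exactly two points. As $c$ varies, these intersection points trace out the components of $\wtilde D_\lambda$ that dominate $C$ under $\pi$; since the general $F_c$ hits $\wtilde D_\lambda$, at least one such dominating component exists. I would take $\wtilde D'_\lambda$ to be one of these components (say $\pi\resto{\wtilde D'_\lambda}$ is dominant, hence surjective onto $C$).

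First I would set up the restriction map and show injectivity. Fix $r$ and let $\sigma \in H^0\bigl(\wtilde S_\lambda,\, \wtilde \sA_\lambda^{[r]}\bigr)$ be a section that restricts to zero on $\wtilde D'_\lambda$. I want to conclude $\sigma = 0$. For a general $c \in C^\circ$, by \iref{il:813} and Corollary~\ref{K0pf:cor-3} the restriction $\wtilde \sA_\lambda^{[r]}\resto{F_c}$ is the trivial line bundle on $F_c \simeq \bP^1$, so $\sigma\resto{F_c}$ is a constant section of $\O_{F_c}$. But $F_c$ meets $\wtilde D'_\lambda$ (because $\pi\resto{\wtilde D'_\lambda}$ is surjective, so $\wtilde D'_\lambda \cap F_c \neq \emptyset$ for every $c$, and for general $c$ this intersection lies in the locus where everything is well-behaved), and $\sigma$ vanishes there. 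A nonzero constant section of $\O_{\bP^1}$ has no zeros, so $\sigma\resto{F_c}$ must be identically zero. This holds for all general $c$, and the general fibers $F_c$ cover a dense open subset of $\wtilde S_\lambda$; since $\wtilde \sA_\lambda^{[r]}$ is torsion-free (indeed reflexive), a section vanishing on a dense open set vanishes everywhere, so $\sigma = 0$. This gives injectivity of the restriction map.

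The consequences then follow formally. Injectivity for $r = 1$, combined with the fact that $h^0\bigl(\wtilde S_\lambda,\, \wtilde \sA_\lambda^{[r]}\bigr) > 0$ for some $r$ (positivity of $\kappa(\wtilde \sA_\lambda)$, property~\iref{il:812}), shows $\wtilde \sA_\lambda\resto{\wtilde D'_\lambda}$ is a nonzero invertible sheaf on the curve $\wtilde D'_\lambda$ — invertibility because $\wtilde D'_\lambda$ is a component of $\wtilde D_\lambda$ and $\Omega^1_{\wtilde S_\lambda}(\log \wtilde D_\lambda)$ is locally free along $\wtilde D_\lambda$ by~\iref{il:815}, so $\wtilde\sA_\lambda$ is invertible in a neighborhood of $\wtilde D_\lambda$. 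For the Kodaira-Iitaka dimension: the injective restriction maps $H^0\bigl(\wtilde S_\lambda,\, \wtilde \sA_\lambda^{[r]}\bigr) \hookrightarrow H^0\bigl(\wtilde D'_\lambda,\, \wtilde \sA_\lambda^{[r]}\resto{\wtilde D'_\lambda}\bigr)$ show $\kappa\bigl(\wtilde \sA_\lambda\resto{\wtilde D'_\lambda}\bigr) \geq \kappa(\wtilde \sA_\lambda)$; the reverse inequality is automatic since $\wtilde D'_\lambda$ has dimension one and the rational maps defined by $\wtilde \sA_\lambda$ on $\wtilde S_\lambda$ restrict compatibly, so both sides equal $\kappa(\wtilde\sA_\lambda) \in \{0,1\}$, and positivity forces the common value to be $1$.

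The main obstacle is the geometric input underlying the choice of $\wtilde D'_\lambda$ and the verification that a \emph{general} fiber $F_c$ genuinely meets this chosen component transversally at a point lying in the log-smooth locus where $\wtilde\sA_\lambda$ is invertible and the restriction of sections behaves well. This requires carefully matching up: the two boundary intersection points of the general fiber (from $K + D \equiv 0$), the components of $\wtilde D_\lambda$ they lie on, and the open set $C^\circ$ from Corollary~\ref{cor:descofF}. One must ensure $\wtilde D'_\lambda$ is not $\pi$-vertical (else the argument collapses) — this is where the non-emptiness of $\wtilde D_\lambda$ together with the numerical triviality of $K_{\wtilde S_\lambda} + \wtilde D_\lambda$ is used decisively: a vertical boundary component would contribute nothing to $(K_{\wtilde S_\lambda} + \wtilde D_\lambda) \cdot F_c = 0$ yet the general fiber must still see the boundary with multiplicity two.
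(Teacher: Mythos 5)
Your argument is correct and follows the same route as the paper: choose a boundary component $\wtilde D'_\lambda$ hit by every general $\pi$-fiber, use the triviality of $\wtilde\sA_\lambda^{[r]}\resto{F_c}$ from Corollary~\ref{K0pf:cor-3} to deduce that a section vanishing on $\wtilde D'_\lambda$ vanishes on the general fibers and hence everywhere, and conclude invertibility from properties \iref{il:813} and \iref{il:815}. The only cosmetic difference is in the deduction of $\kappa(\wtilde\sA_\lambda)=\kappa(\wtilde\sA_\lambda\resto{\wtilde D'_\lambda})$: the appeal to a ``reverse inequality'' is superfluous, since the injections already force $\kappa(\wtilde\sA_\lambda)\le\kappa(\wtilde\sA_\lambda\resto{\wtilde D'_\lambda})\le 1$, and $\kappa(\wtilde\sA_\lambda)>0$ then pins both values to $1$.
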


\begin{proof}
  Corollary~\ref{cor:descofF} implies that there exists a component
  $\wtilde D'_\lambda \subset \wtilde D_\lambda$ that is hit by all
  the curves $(F_c)_{c \in C^\circ}$. Now let $r$ be any given number.
  If $h^0\bigl(\wtilde S_\lambda,\, \wtilde \sA_\lambda^{[r]}\bigr) =
  0$, there is nothing to show.  Otherwise, using the notation of
  Corollary~\ref{cor:descofF}, set 
  $$
  \wtilde D'_{\lambda,\circ} := \wtilde D'_{\lambda} \cap
  \pi^{-1}(C^\circ).
  $$
  Since a section in the trivial bundle is determined by its value
  at any given point, a section $\sigma \in H^0\bigl(\wtilde
  S_\lambda,\, \wtilde \sA_\lambda^{[r]}\bigr)$ is uniquely determined
  by its restriction to $\wtilde D'_{\lambda,\circ} \subset \wtilde
  D'_\lambda$ by Corollary~\ref{K0pf:cor-3}.  Finally, note that
  $\wtilde \sA_\lambda\resto{\wtilde D'_\lambda}$ is invertible by
  \iref{il:813} and \iref{il:815}.  Thus the claim is shown.
\end{proof}

\begin{rem}\label{rem:restiota}
  Let $\iota : \wtilde \sA_\lambda \to \Sym^{[n]} \Omega^1_{\wtilde
    S_\lambda}(\log \wtilde D_\lambda)$ denote the injection of our
  Viehweg-Zuo sheaf into the sheaf of pluri-log differentials. Then
  its restriction $\iota\smallresto{\wtilde D'_\lambda}$ is injective.
\end{rem}

\subsection{Existence of pluri-forms on $\boldsymbol{\wtilde D'_\lambda}$} 

As a last ingredient in the proof, we show that sections in tensor
powers of the invertible sheaf $\wtilde \sA_\lambda\resto{\wtilde
  D'_\lambda}$ can again be interpreted as pluri-forms on the
boundary.

\begin{lem}\label{lem:pluriformsondprime}
  Let $\wtilde D''_\lambda = (\wtilde D_\lambda - \wtilde
  D'_\lambda)\resto{\wtilde D'_\lambda}$. Then there exists a number
  $m \leq n$ and an injective sheaf morphism $\wtilde
  \sA_\lambda\resto{\wtilde D'_\lambda} \to \Sym^m \Omega^1_{\wtilde
    D'_\lambda}(\log \wtilde D''_\lambda)$.
\end{lem}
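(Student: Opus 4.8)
The plan is to combine the injectivity of the restricted Viehweg--Zuo inclusion from Remark~\ref{rem:restiota} with the residue filtration of $\Sym^n$ of the logarithmic cotangent bundle along the boundary component $\wtilde D'_\lambda$. By \iref{il:815} the pair $(\wtilde S_\lambda, \wtilde D_\lambda)$ is log-smooth along all of $\wtilde D_\lambda$; in particular $\wtilde D'_\lambda$ is a smooth curve, in a neighbourhood of $\wtilde D'_\lambda$ the sheaf $\Omega^1_{\wtilde S_\lambda}(\log \wtilde D_\lambda)$ is locally free, and the restriction $\Sym^{[n]}\Omega^1_{\wtilde S_\lambda}(\log \wtilde D_\lambda)\resto{\wtilde D'_\lambda}$ agrees with the honest restriction $\Sym^n\bigl(\Omega^1_{\wtilde S_\lambda}(\log \wtilde D_\lambda)\resto{\wtilde D'_\lambda}\bigr)$ (reflexive hull and restriction are harmless here because the sheaf is locally free nearby).

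First I would record the restricted residue sequence along $\wtilde D'_\lambda$,
$$
0 \longrightarrow \Omega^1_{\wtilde D'_\lambda}(\log \wtilde D''_\lambda) \longrightarrow \Omega^1_{\wtilde S_\lambda}(\log \wtilde D_\lambda)\resto{\wtilde D'_\lambda} \xrightarrow{\ \mathrm{res}\ } \sO_{\wtilde D'_\lambda} \longrightarrow 0,
$$
which is the standard local computation for logarithmic differentials at a simple normal crossings point (cf.\ \cite[Chapt.~11c]{Iitaka82}), and in which $\wtilde D''_\lambda = (\wtilde D_\lambda - \wtilde D'_\lambda)\resto{\wtilde D'_\lambda}$ is exactly the divisor occurring in the statement. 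Taking $n$-th symmetric powers of this short exact sequence of vector bundles on the smooth curve $\wtilde D'_\lambda$ then produces a filtration
$$
0 = \sF^{n+1} \subseteq \sF^{n} \subseteq \cdots \subseteq \sF^{0} = \Sym^n\bigl(\Omega^1_{\wtilde S_\lambda}(\log \wtilde D_\lambda)\resto{\wtilde D'_\lambda}\bigr),
$$
whose graded pieces are $\sF^i/\sF^{i+1} \simeq \Sym^i \Omega^1_{\wtilde D'_\lambda}(\log \wtilde D''_\lambda) \otimes \Sym^{n-i}\sO_{\wtilde D'_\lambda} \simeq \Sym^i \Omega^1_{\wtilde D'_\lambda}(\log \wtilde D''_\lambda)$ for $0 \le i \le n$; the crucial point here is that the quotient in the residue sequence is the \emph{trivial} line bundle, so that no twist is lost in passing to the graded pieces.

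Finally I would run the following elementary extraction. By Remark~\ref{rem:restiota}, $\iota\resto{\wtilde D'_\lambda}$ embeds the line bundle $\wtilde \sA_\lambda\resto{\wtilde D'_\lambda}$ into $\sF^{0}$. Let $m$ be the largest index with $\wtilde \sA_\lambda\resto{\wtilde D'_\lambda} \subseteq \sF^{m}$; then $m \le n$, since $\sF^{n+1} = 0$ and the image is non-zero, and by maximality the induced morphism
$$
\wtilde \sA_\lambda\resto{\wtilde D'_\lambda} \longrightarrow \sF^{m}/\sF^{m+1} \simeq \Sym^m \Omega^1_{\wtilde D'_\lambda}(\log \wtilde D''_\lambda)
$$
is non-zero. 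As its source is a line bundle, hence torsion-free, on the integral curve $\wtilde D'_\lambda$, and its target is locally free, this morphism is injective, which is the assertion. (One moreover gets $m \ge 1$: if $m = 0$ the embedding would factor through $\sF^{0}/\sF^{1} \simeq \sO_{\wtilde D'_\lambda}$, forcing $\kappa\bigl(\wtilde \sA_\lambda\resto{\wtilde D'_\lambda}\bigr) \le 0$ and contradicting Lemma~\ref{lem:restofposkod} together with $\kappa(\wtilde \sA_\lambda) > 0$.) The only step where I expect real care to be needed is setting up the residue sequence and identifying the graded pieces of its symmetric power correctly; everything after that is formal.
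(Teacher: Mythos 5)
Your proposal is correct and follows essentially the same line as the paper: restrict the residue (conormal) sequence along $\wtilde D'_\lambda$, take the standard filtration of $\Sym^n$, observe that the graded pieces are $\Sym^p \Omega^1_{\wtilde D'_\lambda}(\log \wtilde D''_\lambda)$ because the cokernel is trivial, and descend the restricted Viehweg--Zuo inclusion to the deepest filtration step it lands in. The paper phrases this as an explicit iteration on $p$ (starting at $p=0$, where positivity of $\kappa$ kills the map to $\O$), whereas you package the same reasoning as a maximality argument; the two are interchangeable.
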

\begin{proof}
  Since $\wt \sA_\lambda\resto{\wtilde D'_\lambda}$ is invertible by
  \iref{il:813} and \iref{il:815}, it is enough to show that there
  exists a non-zero morphism $\wtilde \sA_\lambda\resto{\wtilde
    D'_\lambda} \to \Sym^m \Omega^1_{\wtilde D'_\lambda}(\log \wtilde
  D''_\lambda)$, for some $m\in \bN$.  We will use the following
  sequence that relates restrictions of log-forms with log-forms on
  the restriction---the sequence is discussed in \cite[2.13]{KK05}.
  \begin{equation}\label{eq:restoflogomega2}
    \xymatrix{
      0 \ar[r] & \Omega^1_{\wtilde D'_\lambda}(\log \wtilde
      D''_\lambda) \ar[r]^(.45){\alpha} &  
      \Omega^1_{\wtilde S_\lambda}(\log \wtilde
      D_\lambda)\resto{\wtilde D'_\lambda} \ar[r]^(.65){\beta} &
      \O_{\wtilde D'_\lambda} \ar[r] & 0. 
    }
  \end{equation}
  Along with this sequence comes the standard filtration of the
  symmetric product,
  $$
  \Sym^n \Omega^1_{\wtilde S_\lambda}(\log \wtilde
  D_\lambda)\resto{\wtilde D'_\lambda} = \sF^0 \supseteq \sF^1
  \supseteq \cdots \supseteq \sF^n \supseteq \sF^{n+1} = 0,
  $$
  with quotients
  \begin{equation}\label{eq:filtration}
    \xymatrix{
      0 \ar[r] & \sF^{p+1} \ar[r]^{\alpha_p} & \sF^p \ar[r]^(.27){\beta_p}
      & \Sym^p \Omega^1_{\wtilde D'_\lambda}(\log \wtilde D''_\lambda)
      \ar[r] & 0. 
    }
  \end{equation}
  \PreprintAndPublication{See \cite[ex.~II.5.16]{Ha77} for details.}{}
  As in Remark~\ref{rem:restiota}, let $\iota$ be the injection of the
  Viehweg-Zuo sheaf $\wt\sA_\lambda$ into the sheaf of pluri-log
  differentials $\Sym^{[n]} \Omega^1_{\wtilde S_\lambda}(\log \wtilde
  D_\lambda)$.  Recall from Lemma~\ref{lem:restofposkod} that $\wtilde
  \sA_\lambda\resto{\wtilde D'_\lambda}$ has positive Kodaira-Iitaka
  dimension and from Remark~\ref{rem:restiota} that it embeds into
  $\Sym^{[n]} \Omega^1_{\wtilde S_\lambda}(\log \wtilde
  D_\lambda)\resto {\wtilde D'_\lambda}\simeq \Sym^{n}
  \Omega^1_{\wtilde S_\lambda}(\log \wtilde D_\lambda)\resto {\wtilde
    D'_\lambda}$.
  
  First consider the sequence in \eqref{eq:filtration} for $p = 0$.
  Since $\Sym^0 \Omega^1_{\wtilde D'_\lambda}(\log \wtilde
  D''_\lambda) = \O_{\wtilde D'_\lambda}$, and since any morphism from
  an invertible sheaf of positive Kodaira-Iitaka dimension to the
  structure sheaf is necessarily zero, the composition $\beta_0 \circ
  \iota\smallresto{\wtilde D'_\lambda}$ is zero, and the restriction
  $\iota\smallresto{\wtilde D'_\lambda}$ factors via an injection
  $\iota_1: \wtilde \sA\resto{\wtilde D'} \to \sF^1$.
  
  Next consider \eqref{eq:filtration} for $p = 1$. If $\beta_1 \circ
  \iota_1$ is non-zero, the proof is finished.  Otherwise, $\iota_1$
  factors via an injection $\iota_2: \wtilde \sA_\lambda\resto{\wtilde
    D'_\lambda} \to \sF^2$, and we consider \eqref{eq:filtration} for
  $p = 2$, etc. This process must stop after no more than $n$ steps.
  Thus the claim is shown.
\end{proof}

\subsection{End of the proof}

Using the notation introduced in Lemma~\ref{lem:pluriformsondprime},
the adjunction formula shows that $K_{\wtilde D'_\lambda} + \wtilde
D''_\lambda = (K_{\wtilde S_\lambda} + \wtilde
D_\lambda)\resto{\wtilde D'_\lambda} \equiv 0$. In particular, $\deg
\Omega^1_{\wtilde D'_\lambda}(\log \wtilde D''_\lambda) = 0$. On the
other hand, Lemma~\ref{lem:pluriformsondprime} asserts the existence
of an injective morphism of sheaves $\wtilde \sA_\lambda\resto{\wtilde
  D'_\lambda} \to \Sym^m \Omega^1_{\wtilde D'_\lambda}(\log \wtilde
D''_\lambda)$. By Lemma~\ref{lem:restofposkod}, $\wtilde
\sA_\lambda\resto {\wtilde D'_\lambda}$ has positive Kodaira-Iitaka
dimension $\kappa \bigl(\wtilde \sA_\lambda\resto{\wtilde
  D'_\lambda}\bigr) = \kappa(\wtilde \sA_\lambda) > 0$.  This is
clearly absurd, and the proof of Theorem~\ref{thm:mainresult} is thus
finished in the case $\kappa(S^\circ)=0$.  \qed

\section{Proof in case \texorpdfstring{$\kappa(S^\circ)=1$}{Kodaira dimension one}}
\label{sec:k1}

In this case the statements of Theorem~\ref{thm:mainresult} follow
from the results of Section~\ref{sec:gluearama} when one applies the
logarithmic minimal model program. The following proposition
summarizes the standard description of surfaces with logarithmic
Kodaira dimension 1.

\begin{prop}\label{prop:cstarortorus}
  If $\kappa({S^\circ})=1$, then there exists a smooth curve $C$ and a
  fibration $\pi: S \to C$ with connected fibers, such that $K_S + D$
  is trivial on the general fiber. In particular, one of the following
  holds:
  \begin{enumerate}
  \item\ilabel{il:torus} The general fiber is an elliptic curve and no
    component of $D$ dominates $C$, or
  \item\ilabel{il:cstar} The general fiber is isomorphic to $\P^1$ and
    $D$ intersects the general fiber in exactly two points.
  \end{enumerate}
\end{prop}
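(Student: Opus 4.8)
The plan is to invoke the logarithmic abundance theorem for surfaces together with the classification of relatively minimal log-elliptic and log-ruled fibrations. First I would run a log-minimal model program on $(S,D)$; since $\kappa(S^\circ) = \kappa(S,D) = 1$, log abundance for surfaces (which is known unconditionally in dimension~$2$) tells us that $K_S + D$ is semiample, so on a suitable log-minimal model $(S_\mu, D_\mu)$ the divisor $K_{S_\mu} + D_\mu$ is nef and some multiple defines a morphism $S_\mu \to C'$ to a curve. Because the Iitaka dimension is exactly~$1$, the image is a curve, and the Stein factorization produces a fibration $\pi_\mu : S_\mu \to C$ with connected fibers onto a smooth curve $C$ on which $(K_{S_\mu}+D_\mu)$ restricts to degree~$0$ on the general fiber; pulling back along $S \to S_\mu$ and again Stein-factorizing, after possibly blowing up, gives the desired $\pi : S \to C$ with $(K_S+D)$ trivial on the general fiber $F$. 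The key numerical input for the dichotomy is then simply that the general fiber $F$ is a smooth curve meeting $D$ transversally with $(K_S + D)\cdot F = \deg\bigl(K_F + (D\cap F)\bigr) = 0$, i.e.\ $2g(F) - 2 + \#(D\cap F) = 0$.

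The remaining step is to solve this equation over the non-negative integers. The only solutions are $g(F) = 1$ with $D\cap F = \emptyset$, giving case~\iref{il:torus}, and $g(F) = 0$ with $\#(D\cap F) = 2$, giving case~\iref{il:cstar}. In the first case, $F$ is an elliptic curve and, since the generic fiber meets no component of $D$, no component of $D$ can dominate $C$ (a dominating component would intersect the general fiber). In the second case, $F \simeq \P^1$ and exactly two points of $D$ lie on the general fiber, so exactly the components of $D$ that dominate $C$ contribute, and they do so in two points counted with multiplicity; since the pair $(S,D)$ has $D$ a simple normal crossings divisor and the fibration is relatively minimal in the log sense, transversality of these intersections with the general fiber is automatic for general $c \in C$ by generic smoothness. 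This establishes the trichotomy-turned-dichotomy asserted.

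The main obstacle I anticipate is not the numerics but the bookkeeping needed to pass the fibration back from the log-minimal model $S_\mu$ to the original smooth pair $(S,D)$ while preserving the statement ``$K_S + D$ trivial on the general fiber'': the birational morphism $\lambda : S \to S_\mu$ may contract curves lying in fibers, and one must check that $(K_S + D)\cdot F = (K_{S_\mu}+D_\mu)\cdot F_\mu = 0$ still holds for the general fiber, using that $\lambda$ is crepant for the log pair over the generic point of $C$ (no $\lambda$-exceptional divisor dominates $C$). This is standard but is the one place where care is required. Everything else—existence of the fibration, connectedness of fibers, and the adjunction computation—is routine surface theory, so I would keep the exposition of this proposition short and cite the standard references for log abundance and the structure of log-minimal ruled and elliptic surfaces rather than reprove them.
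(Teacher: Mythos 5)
Your proposal is correct and follows essentially the same route as the paper: run a log MMP on $(S,D)$, invoke logarithmic abundance for surfaces to obtain the fibration $\pi_\lambda:S_\lambda\to C$ with $K_{S_\lambda}+D_\lambda$ trivial on the general fiber, compose with $\lambda:S\to S_\lambda$, and solve $2g(F)-2+D\cdot F=0$ to get the dichotomy. The one place where you worry more than necessary is the passage back to $S$: since $\lambda$ is a birational \emph{morphism} whose exceptional locus is a finite union of curves lying over finitely many points of $C$, the general fiber of $\pi=\pi_\lambda\circ\lambda$ is isomorphic to the general fiber of $\pi_\lambda$ and avoids the exceptional locus entirely, so no crepancy argument or extra blow-ups/Stein factorization is needed.
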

\begin{proof}
  The logarithmic abundance theorem in dimension 2, see
  e.g.~\cite[3.3]{KM98}, asserts that for $n \gg 0$ the linear system
  $|n(K_{S_{\lambda}}+D_{\lambda})|$ yields a morphism to a curve
  $\pi_\lambda : S_\lambda \to C$, such that $K_{S_{\lambda}} +
  D_{\lambda}$ is trivial on the general fiber $F_\lambda$ of
  $\pi_\lambda$. Likewise, if $\pi := \pi_\lambda \circ \phi$ and $F
  \subset S$ is a general fiber of $\pi$, then $K_S + D$ is trivial on
  $F$. Statements~\iref{il:torus} and \iref{il:cstar} describe the
  only two ways this can happen.
\end{proof}

To finish the proof of Theorem~\ref{thm:mainresult}, consider the
morphism $\pi:S\to C$ provided by Proposition~\ref{prop:cstarortorus}.
Let $V\subseteq C$ be the locus over which $\pi$ is smooth and either
$D\cap \pi^{-1}(V)=\emptyset$ or $\pi\resto D$ is \'etale.  Consider
the restriction of $\pi$ to $U := \pi^{-1}(V)\cap S^\circ$. By
Proposition~\ref{prop:cstarortorus}, the general fiber of $\pi\resto
U$ is either an elliptic curve, or it is isomorphic to $\C^*$.  In
both cases, it follows from \cite{Kovacs96e} and \cite{Kovacs00a} that
$f$ is isotrivial on the fibers of $\pi: U\to V$. The factorization of
the moduli map follows.
  
It remains to give the detailed description of the moduli map. If the
general fibers of $\pi$ are isomorphic to $\mathbb C^*$,
Corollary~\ref{cor:family-push-forward} yields the claim. Otherwise,
take an irreducible multisection $\widehat V\subset S$, restrict $V$
further if necessary so $\widehat V$ is \'etale over $V$ and take a
base change to $\widehat V$. We end up with a section $\sigma:
\widehat V\to \widehat U := U \times_V \widehat V$.  Finally, set
$\widehat X := X \times_U \widehat U$, and $Z := \widehat V
\times_\sigma \widehat X$. Shrinking $V$ further, if necessary, an
application of Lemma~\ref{lem:relglue} completes the proof of
Theorem~\ref{thm:mainresult}. \qed

\providecommand{\bysame}{\leavevmode\hbox to3em{\hrulefill}\thinspace}
\providecommand{\MR}{\relax\ifhmode\unskip\space\fi MR}
\providecommand{\MRhref}[2]{%
  \href{http://www.ams.org/mathscinet-getitem?mr=#1}{#2}
}
\providecommand{\href}[2]{#2}

\end{document}